\newcommand{\R}{\mathscr{R}}
\newtheorem{corollary}{Corollary}[section]
\newtheorem{theorem}{Theorem}[section]
\newtheorem{lemma}{Lemma}[section]
\newtheorem{remark}{Remark}[section]
\newtheorem{proposition}{Proposition}[section]
\newtheorem{definition}{Definition}[section]
\definecolor{black}{rgb}{0.00,0.00,0.45}
\begin{document}
\title[Double-bosonization and Majid's Conjecture, (II)]{Double-bosonization and Majid's Conjecture, (II):
cases of irregular $R$-matrices and type-crossings of $F_4$, $G_2$}
\author[H. Hu]{Hongmei Hu}
\address{School of Mathematical Sciences, University of Science and Technology of China, Jin Zhai Road 96,
Hefei 230026,
PR China}
\email{hmhu@ustc.edu.cn}

\author[N. Hu]{Naihong Hu$^{\ast\ast}$}
\address{Department of Mathematics,  Shanghai Key Laboratory of Pure Mathematics and Mathematical Practice,
East China Normal University,
Minhang Campus,
Dong Chuan
Road 500,
Shanghai 200241,
PR China}
\email{nhhu@math.ecnu.edu.cn}
\subjclass{Primary 16S40, 16W30, 17B37, 18D10; Secondary  17B10, 20G42, 81R50}
\date{August 12, 2015}


\keywords{Double-bosonization, braided category, braided groups (Nichols algebras), type-crossing construction, normalized $R$-matrix,  representations}

\thanks{$^{\ast\ast}$ N.~H., Corresponding author, supported by the NNSFC (Grant No.
 11271131).}

\date{}
\maketitle

\newcommand*{\abstractb}[3]{ %
                             \begingroup%
                             \leftskip=8mm \rightskip=8mm
                             \fontsize{11pt}{\baselineskip}\noindent{\textbf{Abstract} ~}{#1}\\ 
                             {\textbf{Keywords} ~}{#2}\\ 
                             {\textbf{MR(2010) Subject Classification} ~}{#3}\\ 
                                                          \endgroup
                           }
\begin{abstract}
The purpose of the paper is to build up the related theory of weakly
quasitriangular dual pairs suitably for non-standard $R$-matrices (irregular), and
establish the generalized double-bosonization construction theorem for irregular $R$,
which generalize Majid's results for regular $R$ in \cite{majid1}. As an
application, the type-crossing construction for the exceptional
quantum groups of types $F_{4}$, $G_{2}$ is obtained. This affirms
the Majid's expectation that the tree structure of nodes diagram
associated with quantum groups can be grown out of the node
corresponding to $U_q(\mathfrak{sl}_2)$ by double-bosonization procedures. Notably from a
representation perspective, we find an effective method to get the
minimal polynomials for the non-standard $R$-matrices involved.
\end{abstract}

\section{Introduction}

\noindent{\rm 1.1.}
The charming power of quantum group theory always attracts many
mathematicians to find some better way in a suitable framework to
understand the structure of quantum groups defined initially by
generators and relations. Specially for the quasitriangular Hopf
algebras (with universal $R$-matrices), there are several well-known
general constructions, among which main representatives are: $(1)$
The Drinfeld's quantum double theory of any Hopf algebra introduced
in \cite{dri1}. $(2)$ The FRT-construction in \cite{FRT1} based on
the (standard) $R$-matrices. $(3)$ The Majid's double-bosonization
theory in \cite{majid1} (also see  Sommerh\"auser's construction
\cite{somm} independently working in a slightly different braided
category, i.e., the Yetter-Drinfeld category), which improved the
FRT-construction via extending Drinfeld's quantum double to the
generalized quantum double associated with dually-paired braided
groups (named by Majid) coming from a braided category consisting of
the (co-)modules of a (co-)quasitriangular Hopf algebra. Based on
his series of earlier work \cite{majid2}--\cite{majid5}, Majid
\cite{majid1} in 1995 developed the double-bosonization theory to
yield a direct construction for $U_q(\mathfrak g)$ as an application
of which one can view the Lusztig's algebra $\mathfrak f$
(\cite{lus}) as the braided group in a special braided category
$\mathfrak{M}_H$ (or ${}_H\mathfrak{M}$), where
($H=kQ=k[K_i^{\pm1}]$, $A=kQ^\vee$) is a weakly quasitriangular dual
pair (see p. 169 of \cite{majid1}), where $Q$ (resp. $Q^\vee$) is a
(dual) root lattice of $\mathfrak g$. $(4)$ The Rosso's quantum shuffle construction \cite{rosso} (axiomatic approach) (for the entire construction see \cite{HLR})
based on the braidings, which together with Nichols' earlier work \cite{N} has been closely related to the currently fruitful developments of classifications of the
pointed Hopf algebras or Nichols algebras (or braided groups in terms of Majid), for instance, see \cite{AS}, \cite{AHS}, \cite{HS}, etc.
Let us say more about the double-bosonization below.

\noindent{\rm 1.2.}
Roughly speaking, Majid's double-bosonization theory is as
follows. Associated to any mutually dual braided groups $B^{\star},
B$ covariant under a underlying quasitriangular Hopf algebra H,
there is a new quantum group on the tensor space $B^{\star}\otimes
H\otimes B$ by double bosonization in \cite{majid1}, consisting of
$H$ extended by $B$ as additional ’positive roots’ and its dual
$B^{\star}$ as additional ’negative roots’. Specially, Majid
regarded $U_{q}(\mathfrak{n}^{\pm})$ as the mutually dual braided
groups in a braided category of right (left) $H$-modules with Hopf
``Cartan subalgebra" $H$. On the other hand, based on a couple of
examples of lower rank given in \cite{majid1,majid6,majid7}, Majid
claimed that many novel quantum groups, as well as the
rank-inductive or type-crossing construction (we named) of
$U_q(\mathfrak g)$ can be obtained in principle by this theory,
since he expected his double-bosonization construction would allow
to generate a tree of quantum groups and at each node of the tree,
there should be many choices to adjoin a certain dually-paired
braided groups covariant under a certain quantum subgroup at that
node. In fact, it is a challenge to elaborate the full tree
structure because it involves some rather complicated technical
points needed to be solved from representation theory. Moreover,
it is difficult to explore the information encoded in the $R$-matrix
corresponding to a certain representation of $U_{q}(\mathfrak g)$,
for example, the spectral decomposition of $R$-matrix, etc. By
exploiting the well-known information for standard $R$-matrices (we mean those associated to the vector representations of classical types $ABCD$), the
authors in \cite{HH} obtained the rank-inductive construction of
quantum groups $U_q(\mathfrak g)$ for classical types in the quantum tree of Majid.
Furthermore, can we elucidate the tree structure at the exceptional
nodes? That is,
{\color{black}\it How to construct the exceptional quantum groups via double-bosonization theory?}

\noindent{\rm 1.3.}
Recall that the starting point of Majid's framework \cite{majid1} is
to work with the weakly quasitriangular dual pairs of bialgebras
$(\widetilde{U(R)}, A(R))$, where $\widetilde{U(R)}:=A(R)\Join A(R)$
is the double cross product defined by \cite{majid5}, $A(R)$ is the
FRT-bialgebra defined in \cite{FRT1}. He could seek the weakly
quasitriangular dual pair $(U_q(\mathfrak g),\mathcal{O}_{q}(G))$ as
the quotient Hopf algebras of $(\widetilde{U(R)}, A(R))$ to
establish his double-bosonization Theorem  (see Corollary 5.5
\cite{majid1}) when $R$ is regular (for definition, see p. 174,
\cite{majid1}). In \cite{HH}, when $R$ is one of standard
$R$-matrices which is regular in the sense of Majid, we can work
with a slight different weakly quasitriangular dual pair
$(U_q^{\textrm{ext}}(\mathfrak g),\mathcal{O}_{q}(G))$ that fits into Majid's
original framework (Corollary 5.5 \cite{majid1}), where
$U_q^{\textrm{ext}}(\mathfrak g)$ is the extended Hopf algebra of
$U_q(\mathfrak g)$ of classical type,
$\mathcal{O}_{q}(G)$ is the quotient Hopf algebra of $A(R)$, the
quantum coordinate algebra on the associated simple Lie group $G$ of
classical type (cf. \cite{klim}), so that the braided vector algebra
 $B=V(R',R)\in{}^{\mathcal{O}_{q}(G)}\mathfrak M$, the braided co-vector algebra
 $B^{\ast}=V^{\vee}(R',R_{21}^{-1})\in\mathfrak M^{\mathcal{O}_{q}(G)}$ can be transferred into the originally-required
objects in the braided categories $\mathfrak M_{U_q^{\textrm{ext}}(\mathfrak g)}$, ${}_{U_q^{\textrm{ext}}(\mathfrak
g)}\mathfrak M$,
respectively.

\noindent{\rm 1.4.} While for those non-standard $R$-matrices we
encountered when we deal with the type-crossing construction of the
exceptional quantum groups, the weakly quasitriangular dual pair of
bialgebras $(\widetilde{U(R)}, A(R))$, previously served as the
starting point of Maijd's work, has to be replaced. The new weakly
quasitriangular dual pair needs to be created since the quantum
coordinate algebra $\mathcal O_q(G)$ doesn't meet our requirement
and its substitute is no longer the quotient of $A(R)$. Such
$R$-matrices are not regular in the sense of Majid, we name them
{\it irregular}. This means that it is desirable to establish the
related theory of the weakly quasitriangular dual pairs suitably for
irregular $R$-matrices. Roughly speaking, thanks to Theorem 8
\cite{FRT1}, we can exploit their defining bialgebra $H_R$ as a
suitable candidate, which will be a Hopf algebra when the $R$
involved satisfies the FRT-condition. Also, we can still use the
double cross product $\widetilde{U(R)}$ to yield a suitable quotient
Hopf algebra $\widehat{U(R)}$ we define. Fortunately, we prove that
such a pair $(\widehat{U(R)}, H_R)$ forms a weakly quasitriangular
dual pair in the sense of Majid as we expected. When $R$ is
standard, $\widehat{U(R)}$ coincides with
$U_q^{\textrm{ext}}(\mathfrak g)$ by \cite{klim}, and $H_R$ is
isomorphic to $\mathcal O_q(G)$ due to the Remark of Theorem 8
\cite{FRT1}. For convenience, we still write
$U_q^{\textrm{ext}}(\mathfrak g)$ instead of $\widehat{U(R)}$. On
the other hand, we further argue that $H_R$ is coquasitriangular
when $R$ satisfies our more assumption (than the FRT-condition), so
that the categories of left (right) comodules over $H_R$ are braided
in the sense of Majid and $V(R',R)\in {}^{H_R}\mathfrak M$,
$V^{\vee}(R',R_{21}^{-1})\in \mathfrak M^{H_R}$.

\noindent{\rm 1.5.}
After clarifying the rough destination above, in order to deduce the
generalized version of double-bosonization construction theorem
suitably for general $R$-matrices, especially for the
double-bosonization inductive construction of the exceptional
quantum groups, we have to refine the above framework. Firstly, we
need to replace $U_q^{\textrm{ext}}(\mathfrak g)$ by its central
extension object $\widetilde{U_q^{\textrm{ext}}(\mathfrak
g)}:=U_q^{\textrm{ext}}(\mathfrak g)\otimes \mathbb C[c, c^{-1}]$,
and $H_{R_{VV}}$ by
$\widetilde{H_{R_{VV}}}:=H_{R_{VV}}\otimes\mathbb C[g, g^{-1}]$,
where $R_{VV}$ is the $R$-matrix corresponding to a certain chosen
irreducible representation of $U_q(\mathfrak g)$. Secondly, to
normalize $R_{VV}$ at its certain eigenvalue to gain a quantum group
normalization constant $\lambda$ such that $\langle
c,g\rangle=\lambda$ and $(\widetilde{U_q^{\textrm{ext}}(\mathfrak
g)},\widetilde{H_{R_{VV}}})$ forms a new weakly quasitriangular dual
pair of Hopf algebras as we desired. Thirdly, to get the minimal
polynomial of the braiding $PR_{VV}$, from which we achieve the pair
$(R,R')$, where $R$ is determined by the above normalization of
$R_{VV}$, subsequently, $R'$ can be figured out from the minimal
polynomial. Notice that $A(R)\cong A(R_{VV})\subset
H_{R_{VV}}\subset \widetilde{H_{R_{VV}}}$ as bialgebras, we can
regard the braided objects $B=V(R',
R)\in{}^{\widetilde{A(R)}}\mathfrak M$,
$B^*=V^{\vee}(R',R_{21}^{-1})\in  \mathfrak
M^{\widetilde{A(R)}}$ as the braided objects in ${}^{\widetilde{H_{R_{VV}}}}\mathfrak M$ and $\mathfrak
M^{\widetilde{H_{R_{VV}}}}$, respectively. Via the dual pairing between $\widetilde{U_q^{\textrm{ext}}(\mathfrak g)}$ and $\widetilde{H_{R_{VV}}}$, we get $B=V(R',
R)\in{\mathfrak M}_{\widetilde{U_q^{\textrm{ext}}(\mathfrak g)}}$
and
$B^*=V^{\vee}(R',R_{21}^{-1})\in{}_{\widetilde{U_q^{\textrm{ext}}(\mathfrak
g)}}\mathfrak M$. This affords us a prerequisite for building the generalized double-bosonization construction Theorem for the irregular $R$'s.

\noindent{\rm 1.6.} The paper is organized as follows. In Section 2,
we recall some basic facts about the FRT-construction \cite{FRT1}
and the Majid's double-bosonization \cite{majid1}. In Section 3, for
the irregular $R$-matrices, we start with  the weakly
quasitriangular dual pair of bialgebras $(\widetilde{U(R)}, H_R)$ to
get the required weakly quasitriangular dual pair of Hopf algebras
$(\widehat{U(R)}, H_R)$ when the $R$ involved meets certain strict
conditions. Finally, we establish the generalized
double-bosonization construction theorem for irregular $R$'s.
Section 4 is devoted to applying this theorem to give the
type-crossing double-bosonization constructions of $U_{q}(F_{4})$
and $U_{q}(G_{2})$. First of all, in order to work
out the construction of $U_{q}(F_{4})$, we start from the nodes
diagram associated to $U_{q}(B_{3})$ and its $8$-dimensional spin
representation $T_{V}$, and prove that the braiding $\hat R_{VV}$ is of diagonal type and find an ingenious method to
figure out its minimal polynomial, which
captures/depends on some features of the defining representation
rather than the whole information of $R$-matrix itself. Consequently, making the normalization for $R_{VV}$ at a certain eigenvalue of its braiding to get the pair $(R, R')$ from which
determine our required dually-paired braided groups, we prove the quantum group we constructed is just $U_{q}(F_{4})$. Next, for constructing $U_{q}(G_{2})$,
we begin with $U_{q}(A_{1})$ and its spin $\frac{3}{2}$
representation, which is built on the $4$-dimensional homogeneous
submodule of degree $3$ in the $U_{q}(sl_{2})$-module algebra (cf.
\cite{H}) $\mathcal A_q=\mathbb{C}_q\langle x, y\rangle$ with
$yx=qxy$ (namely, the Manin's quantum plane). Similar information necessary for $U_q(G_2)$ is provided.
We believe that the results and methods of this paper will be useful for seeking new quantum groups or Hopf algebras arising from various Nichols algebras currently focused on by
many Hopf algebraists (for instance, see \cite{AHS}, \cite{AS}, \cite{CL}, \cite{HS}, etc. and references therein).

\section{Preliminaries}
Let us fix some general notation which will be kept throughout this paper.
The letters $\mathbb{C}, \mathbb{Z}$ always stand for the complex field,
the set of integer numbers,
respectively.
$0\neq q\in \mathbb{C}$ and $n\in \mathbb{Z}_+$,
then
$$[n]_q=\frac{q^n-q^{-n}}{q-q^{-1}},\quad
[n]_q!=\prod_{k=1}^n [k]_q,\quad
\begin{bmatrix}
n\\ k
\end{bmatrix}_q
=\frac{[n]_q!}{[n-k]_q![k]_q!}.$$
Let $\mathbb{R}$ be the real field,
$E$ the Euclidean space $\mathbb{R}^{n}$ or a suitable
subspace. Denote by $\varepsilon_{i}$'s the usual orthogonal unit
vectors in $\mathbb{R}^{n}$.
Let $\mathfrak{g}$ be a finite dimensional complex simple Lie algebra with simple roots $\alpha_{i}$'s,
$\lambda_{i}$ the fundamental weight corresponding to simple root $\alpha_{i}$.
Cartan matrix of $\mathfrak g$ is $(a_{ij})$,
where $a_{ij}=\frac{2(\alpha_{i},\alpha_{j})}{(\alpha_{i},\alpha_{i})}$,
and $d_{i}=\frac{(\alpha_{i},\alpha_{i})}{2}$.
Let $(H,\R)$ be a quasitriangular Hopf algebra,
where $\R$ is the universal $R$-matrix,
$\R=\R^{(1)}\otimes \R^{(2)}$,
$\R_{21}=\R^{(2)}\otimes \R^{(1)}$.
Denote by $\Delta, \eta, \epsilon, S$ its coproduct,
counit,
unit,
and antipode of $H$, respectively.
We shall use Sweedler's notation:
$\Delta(h)=h_{1}\otimes h_{2}$, for $h\in H$.
$H^{\textrm{op}} \ (H^{\textrm{cop}})$ denotes the opppsite (co)algebra structure of $H$.
$\mathfrak{M}_{H} \ ({}_{H}\mathfrak{M}$) denotes the braided category consisting of right (left) $H$-modules.
If $(A,\mathfrak{r})$ is a coquasitriangular Hopf algebra $A$,
where $\mathfrak{r}$ is the universal r-form of $A$,
then ${}^{A}\mathfrak{M} \ (\mathfrak{M}^{A})$ denotes the braided category  consisting of left (right) $A$-comodules.
For a detailed description of these theories,
we left the readers to refer to Drinfeld's and Majid's papers \cite{dri2}, \cite{majid2}, \cite{majid3}, and so on.
By a braided group we mean a braided bialgebra or Hopf algebra in some braided category.
In order to distinguish from the ordinary Hopf algebras,
we denote by
$\underline{\Delta},\underline{S}$ its coproduct and antipode, respectively.

\subsection{FRT-construction}
An invertible solution of the quantum Yang-Baxter equation (in brief, QYBE) $R_{12}R_{13}R_{23}=R_{23}R_{13}R_{12}$ is called $R$-matrix.
There is a bialgebra $A(R)$ \cite{FRT1} corresponding to any invertible $R$-matrix,
called the FRT-bialgebra.
\begin{definition}
$A(R)$ is generated by $1$ and $t=\{t^{i}_{j}\}$,
having the following structure:
$$RT_{1}T_{2}=T_{2}T_{1}R,\quad
\Delta(T)=T\otimes T,\quad
\epsilon(T)=I~(\mbox{unit~matrix}).$$
$A(R)$ is a coquasitriangular bialgebra with
$\R:~A(R)\otimes A(R) \longrightarrow k$ such that $\R(t^{i}_{j}\otimes t^{k}_{l})=R^{ik}_{jl}$.
Here
$T_{1}=T\otimes I$,
$T_{2}=I\otimes T$, $R^{ik}_{jl}$ denotes the entry at row $(ik)$ and column $(jl)$ in $R$.
\end{definition}
There is a general procedure to construct the Hopf algebra $H_{R}$ associated with the FRT-bialgebra $A(R)$ in \cite{FRT1}.
The algebra $H_{R}$ is generated by  $1$ and $t^{i}_{j},\widetilde{t}^{i}_{j},i,j=1,\cdots,n$,
with the relations
\begin{gather}
RT_{1}T_{2}=T_{2}T_{1}R,\quad
R^{t}\widetilde{T}_{1}\widetilde{T}_{2}=\widetilde{T}_{2}\widetilde{T}_{1}R^{t},\quad
(R^{t_{2}})^{-1}T_{1}\widetilde{T}_{2}=\widetilde{T}_{2}T_{1}(R^{t_{2}})^{-1},\label{tilde}
\end{gather}
where $(R^t)^{ij}_{kl}=R^{kl}_{ij}$,
$(R^{t_1})^{ij}_{kl}=R^{kj}_{il}$,
$(R^{t_2})^{ij}_{kl}=R^{il}_{kj}$. Specially, assume that
$R^{t_{2}}$ is nonsingular and
\begin{equation}\label{chap21}
(R^{-1})^{t_{1}}P(R^{t_{2}})^{-1}PK_{0}=\textrm{const}\cdot K_{0},
\end{equation}
where $P$ is the permutation matrix with the entry
$P^{ij}_{kl}=\delta_{il}\delta_{jk}$, ``\text{const}" means a
diagonal matrix consisting of constants, and
$(K_{0})^{ij}_{kl}:=\delta_{ij}\delta_{kl}$, then $H_{R}$ is a Hopf
algebra with the structure maps
$$\Delta(T)=T\otimes T,\quad\Delta(\widetilde{T})=\widetilde{T}\otimes \widetilde{T},\quad
\epsilon(T)=\epsilon(\widetilde{T})=I;\quad
S(T)=(\tilde{T})^{t},\quad
S(\tilde{T})=DT^{t}D^{-1}.
$$
Here
$D=\textrm{tr}_{2}(P((R^{t_{2}})^{-1})^{t_{1}})\in M_{n}(\mathbb{C}),$
and $\textrm{tr}_{2}$ denotes matrix trace in the second factor in the tensor product $\mathbb{C}^{n}\otimes \mathbb{C}^{n}$.
In particular,
for the standard $R$-matrices,
$H_{R}$'s are isomorphic to quantum coordinate functions algebras $\mathcal{O}_{q}(G)$'s on the corresponding Lie groups $G$ (see the Remark of Theorem 8
\cite{FRT1}).
We will consider those Hopf algebras $H_{R}$ for the general $R$-matrices in the next section.

In the braided category ${}^{A(R)}\mathfrak{M} \
(\mathfrak{M}^{A{(R)}})$, there are two classical braided groups
$V(R',R)$ and $V^{\vee}(R',R_{21}^{-1})$ in \cite{majid4}, called
the braided (co-)vector algebras, where $R^{\prime}$ is another
matrix satisfying
\begin{gather}
 R_{12}R_{13}R^{\prime}_{23}=R^{\prime}_{23}R_{13}R_{12},\qquad
R_{23}R_{13}R^{\prime}_{12}=R^{\prime}_{12}R_{13}R_{23},\label{*}\\
 (PR+1)(PR^{\prime}-1)=0,\qquad
R_{21}R^{\prime}_{12}=R^{\prime}_{21}R_{12}.\label{**}
\end{gather}
The braided vector algebra $V(R^{\prime},R)$ defined as a quadratic
algebra with generators $1$, $\{e^{i}~|~i=1,\cdots,n\}$ and
relations $e^{i}e^{j}=\sum\limits_{a,b} R'{}^{ji}_{ab}e^{a}e^{b}$,
forms a braided group with $\underline{\Delta}(e^{i})=e^{i}\otimes
1+1\otimes e^{i}, \underline{\epsilon}(e^{i})=0,
\underline{S}(e^{i})=-e^{i}, \Psi(e^{i}\otimes
e^{j})=\sum\limits_{a,b}R^{ji}_{ab}e^{a}\otimes e^{b}$ in braided
category ${}^{A(R)}\mathfrak{M}$. Under the duality $\langle
f_{j},e^{i}\rangle=\delta_{ij}$, the braided co-vector algebra
$V^{\vee}(R^{\prime},R_{21}^{-1})$ defined by $1$, $\{f_{j}\mid
j=1,\cdots,n\}$, and relations
$f_{i}f_{j}=\sum\limits_{a,b}f_{b}f_{a}R'{}^{ab}_{ij}$ forms another
braided group with $\underline{\Delta}(f_{i})=f_{i}\otimes
1+1\otimes f_{i},$ $\underline{\epsilon}(f_{i})=0,$
$\underline{S}(f_{i})=-f_{i},$ $\Psi(f_{i}\otimes
f_{j})=\sum\limits_{a,b}f_{b}\otimes f_{a}R^{ab}_{ij}$ in braided
category $\mathfrak{M}^{A(R)}$.

\begin{remark}\label{rem1}
In fact, when we know the minimal polynomial equation $ \prod_{i}(PR-x_{i})=0
$ of the braiding $PR$, we can normalize $R$ at a certain eigenvalue
$x_{i}$ of $PR$ so that $x_{i}=-1$, then set $
R'=P+P\prod\limits_{j\neq i}(PR-x_{j})$. This gives us a way to get the pair $(R, R')$ that satisfy conditions
\eqref{*} and \eqref{**}.
\end{remark}

\subsection{Majid's double-bosonization}
Let
$C, B$ be a pair of braided groups in $\mathfrak{M}_{H}$,
which are called dually paired if there is an intertwiner
$\textrm{ev}: C\otimes B \longrightarrow k$ with
$
\textrm{ev}(cd,b)=\textrm{ev}(d,b_{\underline{(1)}})\textrm{ev}(c,b_{\underline{(2)}}),$
$
\textrm{ev}(c,ab)=\textrm{ev}(c_{\underline{(2)}},a)\textrm{ev}(c_{\underline{(1)}},b),$
$
\forall a,b\in B,c,d\in C.
$
Then $C^{\textrm{op}/\textrm{cop}}$ (with opposite product and coproduct) is a Hopf algebra in $_{H}\mathfrak{M}$,
which is dual to $B$ in the sense of an ordinary duality pairing $\langle~,~\rangle$ being $H$-bicovariant.
Let $\overline{C}=(C^{\textrm{op}/\textrm{cop}})^{\underline{\textrm{cop}}}$,
then $\overline{C}$ is a braided group in $_{\overline{H}}\mathfrak{M}$,
where $\overline{H}$ is $(H,\R_{21}^{-1})$.
With these, Majid gave the following double-bosonization theorem and some results in \cite{majid1}:
\begin{theorem}\label{ml1} $(${\rm\bf Majid}$)$
On the tensor space $\bar{C}\otimes H \otimes B$,
there is a unique Hopf algebra structure $U=U(\bar{C},H,B)$ such that
$H\ltimes B$ (bosonization) and $\bar{C}\rtimes H$ (bosonization) are sub-Hopf algebras by the canonical inclusions and
$$
\left.
\begin{array}{rl}
bc=&(\R_{1}^{(2)}\rhd c_{\overline{(2)}})
\R_{2}^{(2)}\R_{1}^{-(1)}(b_{\underline{(2)}}\lhd\R_{2}^{-(1)})\,
\langle\R_{1}^{(1)}\rhd c_{\overline{(1)}}, b_{\underline{(1)}}\lhd
\R_{2}^{(1)}\rangle\,\cdot\\
&\qquad\qquad\qquad\quad\,\langle\R_{1}^{-(2)}\rhd \overline{S}c_{\overline{(3)}}, b_{\underline{(3)}}\lhd \R_{2}^{-(2)}\rangle,
\end{array}
\right.
$$
for all $ b\in B, \,c\in\overline{C}$ viewed in $U$.
The product, coproduct of $U$ are given by
$$
\left.
\begin{array}{rl}
(c\otimes h\otimes b)\cdot(d\otimes g\otimes a)=&c(h_{(1)}\R_{1}^{(2)}\rhd d_{\overline{(2)}})
\otimes h_{(2)}\R_{2}^{(2)}\R_{1}^{-(1)}g_{(1)}\otimes (b_{\underline{(2)}}\lhd\R_{2}^{-(1)}g_{(2)})a\\
&\langle\R_{1}^{(1)}\rhd d_{\overline{(1)}},b_{\underline{(1)}}\lhd\R_{2}^{(1)}\rangle
\langle\R_{1}^{-(2)}\rhd\overline{S}d_{\overline{(3)}},b_{\underline{(3)}}\lhd\R_{2}^{-(2)}\rangle;
\end{array}
\right.
$$
$$
\Delta(c\otimes h\otimes b)=c_{\overline{(1)}}\otimes \R^{-(1)}h_{(1)}\otimes b_{\underline{(1)}}\lhd\R^{(1)}
\otimes \R^{-(2)}\rhd c_{\overline{(2)}}\otimes h_{(2)}\R^{(2)}\otimes b_{\underline{(2)}}.
$$
Moreover,
the antipodes of $H\ltimes B$ and $\bar{C}\rtimes H$ can be extended to an antipode $S: U \rightarrow U$ by
the two extensions
$S(chb)=(Sb)\cdot(S(ch))$
and
$S(chb)=(S(hb))\cdot(Sc)$.

Here,
$\rhd, \lhd$ refer to left, right actions respectively,
$\R_{1},\R_{2}$
are distinct copies of the quasitriangular structure $\R$ of $H$.
\end{theorem}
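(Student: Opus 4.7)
The plan is to realize $U=U(\bar{C},H,B)$ as a generalized Drinfeld double of the two ordinary Hopf algebras $H\ltimes B$ and $\bar{C}\rtimes H$ obtained by Majid--Radford bosonization from the braided Hopf algebras $B\in\mathfrak{M}_H$ and $\bar{C}\in{}_{\bar{H}}\mathfrak{M}$. First I would set up both bosonizations explicitly: on $H\otimes B$ and $\bar{C}\otimes H$ the Hopf structures are built from the $H$-action/coaction twisted by $\R$, and the obvious inclusions of $H$, $B$, $\bar{C}$ are (braided) Hopf algebra maps. These two ordinary Hopf algebras will serve as the ``positive'' and ``negative'' Borel-like subalgebras of $U$, sharing $H$ as their common ``Cartan'' piece.

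The core step is to produce a skew-pairing $\sigma\colon(\bar{C}\rtimes H)\otimes(H\ltimes B)\to k$ extending $\textrm{ev}\colon C\otimes B\to k$ and the pairing of $H$ with itself given by $\R$. The cross-relation stated for $bc$ is exactly what $\sigma$ prescribes on the generators: the factors $\R_{1}^{(2)}\rhd c$ and $b\lhd\R_{2}^{(1)}$ transport the braided evaluation across $H$ using the quasitriangular structure, while $\R_{1}^{-(1)}$ and $\R_{2}^{-(2)}$ encode the inverse twist built into $\sigma^{-1}$. Checking that $\sigma$ is an honest bialgebra skew-pairing amounts to a diagrammatic calculation combining the intertwining property of $\textrm{ev}$ with the QYBE and the quasitriangular axioms. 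Once $\sigma$ is in place, the double cross product construction from \cite{majid2,majid5} yields a Hopf algebra on $(\bar{C}\rtimes H)\otimes(H\ltimes B)$; after identifying the two copies of $H$ the underlying space collapses to $\bar{C}\otimes H\otimes B$, and the displayed formulas for the product and coproduct appear upon expanding in bosonization components.

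For uniqueness, once $H\ltimes B$ and $\bar{C}\rtimes H$ are sub-Hopf algebras and the cross-relation holds, any element of $U$ can be put in the PBW-like normal form $c\otimes h\otimes b$, so the multiplication is forced. The antipode then extends from the two sub-Hopf algebras, and the two expressions $(Sb)\cdot S(ch)$ and $S(hb)\cdot Sc$ coincide thanks to the antimultiplicativity of $S$ on each sub-Hopf algebra combined with the skew-pairing identities for $\sigma$.

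I expect the main obstacle to be the verification of associativity of the product (equivalently, the coherence of the cross-relation). The formula for $bc$ distributes four quasitriangular tensors $\R_{1},\R_{2},\R_{1}^{-1},\R_{2}^{-1}$ across three copies of the braided coproducts of $c$ and of $b$; associativity then reduces to a pentagon-type identity for $\R$, the hexagon axiom for the braiding, and the braided antipode axiom for $\underline{S}$. This is precisely where the dually paired hypothesis on $(C,B)$ and the compatibility of the braidings with $\textrm{ev}$ enter in an essential way, and it constitutes the technical heart of Majid's construction.
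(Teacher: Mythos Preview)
Your proposal outlines a reasonable strategy---realizing $U$ as a generalized quantum double of the two bosonizations $H\ltimes B$ and $\bar{C}\rtimes H$ via a skew-pairing built from $\textrm{ev}$ and $\R$---and this is indeed essentially how Majid establishes the result in \cite{majid1}. However, you should note that the present paper does \emph{not} supply its own proof of this theorem. Theorem~\ref{ml1} is stated in the Preliminaries section with the explicit attribution ``(\textbf{Majid})'' and is simply recalled from \cite{majid1} as background; the paper's contributions begin in Section~3 with the weakly quasitriangular dual pairs for irregular $R$-matrices. So there is nothing in this paper to compare your argument against.

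If you want to verify your sketch, the appropriate comparison is with \cite{majid1} (especially the proof of Theorem~3.2 there), where the associativity check and the extension of the antipode are carried out in the braided-category formalism you describe. Your identification of the associativity verification as the technical heart is accurate, and your skew-pairing formulation is consistent with Majid's double cross product framework from \cite{majid5}.
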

Moreover,
Majid \cite{majid1} proposed the concept of a weakly quasitriangular dual pair via his insight on more examples on
matched pairs of bialgebras or Hopf algebras in \cite{majid5}.
\begin{definition}
Let $(H,A)$ be a pair of Hopf algebras equipped with a dual pairing $\langle ,\,\rangle$
and convolution-invertible algebra\,/\,anti-coalgebra maps $\R,\bar{\R}: A \rightarrow H$ obeying
$$\langle\bar{\R}(a),b\rangle=\langle\R^{-1}(b),a\rangle,\quad
\partial^{R}h=\R \ast(\partial^{L}h)\ast\R^{-1},\quad
\partial^{R}h=\bar{\R}\ast(\partial^{L}h)\ast\bar{\R}^{-1}
$$
for
$a, b\in A, \ h\in H$.
Here $\ast$ is the convolution product in $\text{hom}(A,H)$ and
$(\partial^{L}h)(a)=\langle h_{(1)},a\rangle h_{(2)},
$
$
(\partial^{R}h)(a)=h_{(1)}\langle h_{(2)},a\rangle$
are left, right ``differentiation operators" regarded as maps $A \rightarrow H$ for any fixed $h$.
\end{definition}
For convenience,
we give in brief the definition of coquasitriangular Hopf algebra.
\begin{definition}\label{decoquasi}
A coquasitriangular Hopf algebra is a Hopf algebra $A$ equipped with a linear form $\mathfrak{r}: A\otimes A \longrightarrow \mathbb{C}$ such that the following conditions hold:

$(i)$ $\mathfrak{r}$ is invertible with respect to the convolution,
that is,
there exists another linear form $\bar{\mathfrak{r}}: A\otimes A \longrightarrow \mathbb{C}$
such that $\mathfrak{r}\bar{\mathfrak{r}}=\bar{\mathfrak{r}}\mathfrak{r}=\epsilon\otimes\epsilon$ on $A\otimes A$,

$(ii)$ $
\mathfrak{r}\circ (\cdot\otimes \text{id})=\mathfrak{r}_{13}\ast\mathfrak{r}_{23},\quad
\mathfrak{r}\circ(\text{id}\otimes\cdot)=\mathfrak{r}_{13}\ast\mathfrak{r}_{12},\quad
\cdot\circ\tau=\mathfrak{r}\ast\cdot\ast\mathfrak{r}^{-1}.
$
\end{definition}
\begin{remark}\label{Cross}
If there exists a coquasitriangular Hopf algebra $A$ such that $(H,A)$ is a weakly quasitriangular dual pair,
and
$b,
c$
are primitive elements.
Then many cross relations in the above theorem can be simplified,
for example,
\begin{gather*}
[b,c]=\R(b^{\overline{(1)}})\langle c,b^{\overline{(2)}}\rangle-
\langle c^{\overline{(1)}},b\rangle\bar{\R}(c^{\overline{(2)}});
\\
\Delta b=b^{\overline{(2)}}\otimes \R(b^{\overline{(1)}})+1\otimes b,\quad
\Delta c=c\otimes 1+\bar{\R}(c^{\overline{(2)}})
\otimes c^{\overline{(1)}}.
\end{gather*}
\end{remark}

\section{Generalized Double-bosonization Construction Theorem}
Throughout the following sections,
let $T_{V}$ be an irreducible representation of $U_{q}(\mathfrak g)$ or the $h$-adic Drinfeld-Jimbo algebra $U_{h}(\mathfrak g)$ with
$V$ a finite-dimensional vector space.
\subsection{The universal $R$-matrix and $L$-functionals of $U_{h}(\mathfrak g)$ }
In order to provide an explicit expression for the universal $R$-matrix $\mathcal{R}$ of $U_{h}(\mathfrak g)$,
the key step is to determine dual bases of the $h$-adic vector spaces $U_h(\mathfrak b_+)$ and
$\tilde{U}_h(\mathfrak b_-):=\mathbb{C}[[h]]\cdot1+hU_h(\mathfrak b_-)$ with respect to the bilinear form
$\langle~,~\rangle '$:
\begin{equation}\label{imp0}
\langle H_i, \tilde{H}_j\rangle '=d_{i}^{-1}a_{ji},
\quad
\langle H_j, \tilde{F}_i\rangle '=\langle E_{j}, \tilde{H}_i\rangle'=0,
\quad
\langle E_i, \tilde{F}_j\rangle '=\delta_{ij}\frac{h}{e^{d_{i}h}-e^{-d_{i}h}},
\end{equation}
where
$\tilde{H}_i:=hH_i$,
$\tilde{F}_i=hH_i$
and
$\langle aa',b\rangle '=\langle a',b_{(1)}\rangle '\langle a,b_{(2)}\rangle '$,
$
\langle a,bb'\rangle '=\langle a_{(1)},b\rangle '\langle a_{(2)},b'\rangle '
$.
To do this,
there is the following Lemma \ref{orbit}.
\begin{lemma}\label{orbit}
If a positive root $\beta$ and a simple root $\alpha_{i}$ of $\mathfrak g$ belong to the same Weyl group $W$-orbit,
and they satisfy
\begin{equation}\label{imp1}
\langle E_\beta, \tilde{F}_\beta\rangle '=\langle E_i, \tilde{F}_i\rangle '=\frac{h}{q_i-q_i^{-1}},
\end{equation}
then using the above relations \eqref{imp0},
the dual bases of $U_h(\mathfrak b_+)$ and
$\tilde{U}_h(\mathfrak b_-)$ are obtained.
\end{lemma}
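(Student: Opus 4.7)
The plan is to construct explicit dual PBW bases and verify duality using the multiplicativity of $\langle\,,\,\rangle'$ together with the weight-space orthogonality induced by \eqref{imp0}. First I fix a reduced expression $w_0 = s_{i_1}s_{i_2}\cdots s_{i_N}$ of the longest element of $W$, which yields a convex ordering of the positive roots $\beta_k = s_{i_1}\cdots s_{i_{k-1}}(\alpha_{i_k})$ and Lusztig-style root vectors $E_{\beta_k} = T_{i_1}\cdots T_{i_{k-1}}(E_{i_k})$, $\tilde F_{\beta_k} = T_{i_1}\cdots T_{i_{k-1}}(\tilde F_{i_k})$ via the braid automorphisms $T_i$. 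Ordered monomials $E_{\beta_1}^{m_1}\cdots E_{\beta_N}^{m_N}\prod_j H_j^{r_j}$ then form a PBW basis of $U_h(\mathfrak b_+)$, and analogously on the $\tilde F$-side for $\tilde{U}_h(\mathfrak b_-)$.

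Next I exploit that $\langle\,,\,\rangle'$ is root-lattice graded: a weight-$\mu$ element of $U_h(\mathfrak n_+)$ pairs trivially with a weight-$(-\nu)$ element of $\tilde U_h(\mathfrak n_-)$ unless $\mu = \nu$, reducing the duality check to finite-dimensional weight components. The Cartan block is immediate: \eqref{imp0} displays the matrix $\langle H_i,\tilde H_j\rangle' = d_i^{-1}a_{ji}$, whose inverse gives the Cartan portion of the dual basis. For the nilpotent part, I invoke the hypothesis \eqref{imp1}: since $\beta \in W\!\cdot\alpha_i$, the vectors $E_\beta,\tilde F_\beta$ are obtained from $E_i,\tilde F_i$ by iterated $T_{i_j}$'s, and \eqref{imp1} states precisely that this braid-transport preserves the pairing value. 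Combining this with the bialgebra compatibility
\begin{equation*}
\langle aa',b\rangle' = \langle a',b_{(1)}\rangle'\langle a,b_{(2)}\rangle', \qquad \langle a,bb'\rangle' = \langle a_{(1)},b\rangle'\langle a_{(2)},b'\rangle',
\end{equation*}
and inducting on monomial length, one checks that the pairing of $E_{\beta_1}^{m_1}\cdots E_{\beta_N}^{m_N}$ with $\tilde F_{\beta_N}^{n_N}\cdots\tilde F_{\beta_1}^{n_1}$ vanishes unless $m_k = n_k$ for every $k$, and equals an explicit product of $q$-factorials of the form $\prod_k [m_k]_{q_{i_k}}!\bigl(h/(q_{i_k}-q_{i_k}^{-1})\bigr)^{m_k}$ otherwise. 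Rescaling the $\tilde F$-monomials by the reciprocals of these scalars then gives the dual basis.

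The main obstacle I anticipate is justifying the braid-invariance implicit in \eqref{imp1}: one must verify that the $T_i$'s act as isometries, up to the explicit normalization, for $\langle\,,\,\rangle'$, so that transporting $E_i,\tilde F_i$ along any reduced path to $E_\beta,\tilde F_\beta$ yields the same value $h/(q_i-q_i^{-1})$. This reduces to checking that each $T_i$ intertwines the coproduct and the bilinear form built from \eqref{imp0} in a controlled manner; once this compatibility is secured, the remaining steps are a standard Drinfeld-style book-keeping exercise in the PBW basis.
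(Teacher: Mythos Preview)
The paper does not supply its own proof of this lemma; it is stated in Section~3.1 as background material recalled from the literature (essentially \cite{klim}). So there is no paper-proof to compare against, and I assess your proposal on its own terms.

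Your outline follows the standard Drinfeld--Lusztig strategy and is broadly correct, but there is one genuine gap and one misidentified obstacle. The gap is in the sentence ``inducting on monomial length, one checks that the pairing of $E_{\beta_1}^{m_1}\cdots E_{\beta_N}^{m_N}$ with $\tilde F_{\beta_N}^{n_N}\cdots\tilde F_{\beta_1}^{n_1}$ vanishes unless $m_k = n_k$ for every $k$.'' Weight-grading only forces $\sum m_k\beta_k = \sum n_k\beta_k$, and there can be many PBW monomials of the same weight. The orthogonality of distinct PBW monomials in a fixed weight space is the substantive content of the lemma and does \emph{not} follow from the multiplicativity of $\langle\,,\,\rangle'$ plus \eqref{imp1} by a straightforward induction on length. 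What is actually needed is the convexity of the ordering $\beta_1<\cdots<\beta_N$ together with the Levendorskii--Soibelman commutation relations, which say that $E_{\beta_j}E_{\beta_i} - q^{(\beta_i,\beta_j)}E_{\beta_i}E_{\beta_j}$ (for $i<j$) lies in the span of monomials involving only roots strictly between $\beta_i$ and $\beta_j$. This is what makes the induction go through when one expands the coproduct of a PBW monomial; without it the bialgebra-compatibility argument you invoke does not close.

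Your ``main obstacle'' paragraph is misplaced: \eqref{imp1} is a \emph{hypothesis} of the lemma, so you are not required to verify braid-invariance of the pairing here. (That verification is needed only when one wants to \emph{apply} the lemma to Lusztig's root vectors, as the paper does immediately afterwards.) The actual obstacle is the off-diagonal vanishing just described; once that is in hand, your computation of the diagonal entries and the rescaling to obtain the dual basis are correct.
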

These dual bases are denoted by $e_{i}$ resp. $f_{i}$,
and then $\mathcal{R}_{D}:=\sum\limits_i(1\otimes e_i)\otimes(f_i\otimes 1)$ is a universal $R$-matrix of quantum double
$D(U_h(\mathfrak b_+),\tilde{U}_h(\mathfrak b_-))$.
Taking advantage of the canonical homomorphism $\pi$ of $D(U_h(\mathfrak b_+),\tilde{U}_h(\mathfrak b_-))$ to $U_h(\mathfrak g)$,
then $\mathcal{R}:=(\pi\otimes\pi)\mathcal{R}_{D}$ is a universal $R$-matrix of $U_h(\mathfrak g)$.
Especially,
the root elements $E_\beta$ and $F_\beta$,
obtained by Lusztig's automorphism $T_i$'s \cite{lus},
satisfy the condition \eqref{imp1},
and then an corresponding explicit form of the universal $R$-matrix is \cite{klim}:
\begin{equation}\label{imp2}
\mathcal{R}=\textrm{exp}(h\sum\limits_{i,j}B_{ij}(H_{i}\otimes H_{j}))\prod\limits_{\beta\in \Delta_{+}}\textrm{exp}_{q_{\beta}}((1-q_{\beta}^{-2})(E_{\beta}\otimes F_{\beta})).
\end{equation}
Here the matrix $(B_{ij})$ is the inverse of the matrix $(C_{ij})=(d_{j}^{-1}a_{ij})$,
the $q$-exponential function $\textrm{exp}_{q}x$ is defined by $\textrm{exp}_{q}x=\sum\limits_{r=0}^{\infty}\frac{q^{\frac{r(r+1)}{2}}}{[r]_{q}!}x^{r}.$

On the other hand,
corresponding to an irreducible representation $T_V$ of $U_h(\mathfrak g)$,
there are uniquely determined elements $l_{ij}^{\pm}\in U_{h}(\mathfrak g)$ such that
\begin{gather*}
(\textrm{id}\otimes T_{V})(\mathcal{R})(1\otimes v_{j})=\sum\limits_{i}l_{ij}^{+}\otimes v_{i},\\
(T_{V}\otimes \textrm{id})(\mathcal{R}^{-1})(v_{j}\otimes 1)=(T_{V}\otimes \textrm{id})(S\otimes \textrm{id})(\mathcal{R})(v_{j}\otimes 1)=\sum\limits_{i}v_{i}\otimes l_{ij}^{-},
\end{gather*}
These  $l_{ij}^{\pm}$ are called $L$-functionals associated with the representation $T_{V}$ \cite{klim}.
Moreover,
these matrices $\text{L}^{\pm}=(l_{ij}^{\pm})$ generate a bialgebra,
denoted by $U(\text{L}^{\pm})$,
satisfying the following relations
$$\text{L}_{1}^{\pm}\text{L}_{2}^{\pm}R_{VV}=R_{VV}\text{L}_{2}^{\pm}\text{L}_{1}^{\pm},\quad
\text{L}_{1}^{+}\text{L}_{2}^{-}R_{VV}=R_{VV}\text{L}_{2}^{-}\text{L}_{1}^{+},\quad
\Delta(\text{L}^{\pm})=\text{L}^{\pm}\otimes\text{L}^{\pm},\quad
\epsilon(\text{L}^{\pm})=I.
$$

\subsection{$R$-matrices for representations of $U_q(\mathfrak g)$}
By a certain algebra automorphism of the completion
$\bar{U}_q^{+}(\mathfrak g)\bar{\otimes}\bar{U}_q^{+}(\mathfrak g)$ (see P.264 in \cite{klim}),
the above universal $R$-matrix \eqref{imp2} can yield a universal $R$-matrix of $U_q(\mathfrak g)$,
that is,
$$
\mathfrak{R}=\sum\limits_{r_{1},\cdots,r_{n}=0}^{\infty}\prod\limits_{j=1}^{n}
\frac{(1-q_{\beta_{j}}^{-2})^{r_{j}}}{[r_{j}]_{q_{\beta_{j}}}!}q_{\beta_{j}}^{\frac{r_{j}(r_{j}+1)}{2}}E_{\beta_{j}}^{r_{j}}\otimes
F_{\beta_{j}}^{r_{j}}.$$
Then the $R$-matrix datum $R_{VV}$ is obtained by
$R_{VV}=B_{VV}\circ(T_{V}\otimes T_{V})(\mathfrak{R})$, where
$B_{VV}$ denotes the linear operator on $V\otimes V$ given by
$B_{VV}(v\otimes w):=q^{(\mu,\mu^{\prime})}v\otimes w$, for $v\in
V_{\mu},$ $w\in V_{\mu^{\prime}}$.
We can take a basis $\{ v_{i}\}$ of $V$,
$i=1, \cdots, \textrm{dim}V$,
then
\begin{equation}\label{rmatrix}
B_{VV}\circ(T_{V}\otimes
T_{V})(\mathfrak{R})(v_{i}\otimes v_{j})=R_{VV}{}^{mn}_{ji}(v_{n}\otimes v_{m}).
\end{equation}
It need to pay attention to that the $R$-matrix in Majid's paper \cite{majid1} is the $P\circ\cdot\circ P$ of the ordinary $R$-matrix.
In general, $B_{VV}\circ(T_{V}\otimes
T_{V})(\mathfrak{R})(v_{i}\otimes v_{j})=R_{VV}{}^{nm}_{ij}(v_{n}\otimes v_{m})$ in some other references.
However,
$P\circ R\circ P=R^{t}$ when $PR$ is symmetric,
so then we write occasionally
$B_{VV}\circ(T_{V}\otimes
T_{V})(\mathfrak{R})(v_{i}\otimes v_{j})=R_{VV}{}^{ij}_{nm}(v_{n}\otimes v_{m})$ for convenience.
Notice that we will use Majid's form of $R$-matrix in the remaining sections of this paper.

\subsection{The extended Hopf algebra $U_{q}^{\textrm{ext}}(\mathfrak{g})$}
Majid gave the concept of ``weak antipodes" for the dually paired bialgebras in \cite{majid5}.
Dually paired bialgebras $H_{1}, H_{2}$ are said to possess weak antipodes if the canonical maps
$H_{1} \longrightarrow H_{2}^{\ast}$ and $ H_{2} \longrightarrow H_{1}^{\ast}$ defined by
the pairing are invertible in the convolution algebras $\textrm{Hom}(H_{1}, H_{2}^{\ast})$ and $\textrm{Hom}(H_{2}, H_{1}^{\ast})$ respectively.
These weak antipodes are denoted by $\mathbb{S}_{H_{1}}, \mathbb{S}_{H_{2}}$.
On the other hand,
Majid defined $\widetilde{U(R)}=A(R)\Join A(R)$ the double cross product bialgebra constructed in \cite{majid5}.
It consists of two copies of $A(R)$ generated by $1$ and $m^{\pm}$ with the cross relations and coalgebra structure:
$$
Rm^{\pm}_{1}m^{\pm}_{2}=m^{\pm}_{2}m^{\pm}_{1}R,
\quad
Rm_{1}^{+}m_{2}^{-}=m_{2}^{-}m_{1}^{+}R,
\quad
\Delta((m^{\pm})^{i}_{j})=(m^{\pm})_{j}^{a}\otimes (m^{\pm})_{a}^{i},
\quad
\epsilon((m^{\pm})^{i}_{j})=\delta_{ij}.
$$
For any invertible $R$-matrix $R$,
the bialgebras $\widetilde{U(R)}$ and FRT-bialgebra $A(R)$ are dually-paired by
$\langle(m^{+})^{i}_{j},t^{k}_{l}\rangle= R^{i}_{j}{}^{k}_{l},
$
$
\langle (m^{-})^{i}_{j},t^{k}_{l}\rangle =(R^{-1}){}^{k}_{l}{}^{i}_{j}
$ in \cite{majid1},
moreover,
we have the following (revised version of \cite{majid5})
\begin{proposition}
Let $R$ be an invertible matrix solution of the QYBE.
If matrices $R^{t_{2}}$ and $(R^{-1})^{t_{2}}$ are also invertible,
and set $\widetilde{R}=((R^{t_{2}})^{-1})^{t_{2}}$,
$\widetilde{R^{-1}}=[((R^{-1})^{t_{2}})^{-1}]^{t_{2}}$,
then the dual pairing $(\widetilde{U(R)},A(R))$ possesses weak antipodes,
which are defined by
\begin{gather}\label{weak}
(\mathbb{S}_{A(R)}t^{k}_{l})((m^{+})^{i}_{j})
=(\mathbb{S}_{\widetilde{U(R)}}(m^{+})^{i}_{j})(t^{k}_{l})
=\widetilde{R}^{ik}_{jl}=((R^{t_{2}})^{-1})^{il}_{jk},\\
(\mathbb{S}_{A(R)}t^{k}_{l})(m^{-})^{i}_{j})
=(\mathbb{S}_{\widetilde{U(R)}}(m^{-})^{i}_{j})(t^{k}_{l})
=\widetilde{R^{-1}}^{ki}_{lj}=[((R^{-1})^{t_{2}})^{-1}]^{kj}_{li}.
\end{gather}
Their equivalent expressions in matrices are
\begin{gather*}
(\mathbb{S}_{A(R)}t_{2})(m_{1}^{+})=(\mathbb{S}_{\widetilde{U(R)}}m_{1}^{+})(t_{2})=\widetilde{R},\\
(\mathbb{S}_{A(R)}t_{2})(m_{1}^{-})=(\mathbb{S}_{\widetilde{U(R)}}m_{1}^{-})(t_{2})=P\circ\widetilde{R^{-1}}\circ P.
\end{gather*}
\end{proposition}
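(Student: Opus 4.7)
The plan is to build the two weak antipodes from the stated matrix formulas on the generators, extend them to the full bialgebras as anti-algebra maps, and then verify the two convolution-inverse identities needed by Majid's definition.

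First I would set $\mathbb{S}_{\widetilde{U(R)}}((m^{+})^{i}_{j}) \in A(R)^{*}$ and $\mathbb{S}_{\widetilde{U(R)}}((m^{-})^{i}_{j}) \in A(R)^{*}$ by the two displayed formulas, using that the invertibilities of $R^{t_{2}}$ and $(R^{-1})^{t_{2}}$ are exactly what is needed to make $\widetilde R$ and $\widetilde{R^{-1}}$ well-defined. Similarly, define $\mathbb{S}_{A(R)}(t^{k}_{l}) \in \widetilde{U(R)}^{*}$ on $(m^{\pm})^{i}_{j}$ by the symmetric formulas. Then extend each map to the whole bialgebra by the antipode-like rule $\mathbb{S}(aa') = \mathbb{S}(a') \ast \mathbb{S}(a)$, where $\ast$ is the convolution in the appropriate dual algebra. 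Checking well-definedness of this extension is a compatibility test against the FRT-relations $R T_{1} T_{2}=T_{2} T_{1} R$ and $R m^{\pm}_{1} m^{\pm}_{2}=m^{\pm}_{2} m^{\pm}_{1} R$, $R m^{+}_{1} m^{-}_{2}=m^{-}_{2} m^{+}_{1} R$; this is where the QYBE together with the invertibility hypotheses feed in, giving matrix identities among $R$, $R^{-1}$, $\widetilde R$ and $\widetilde{R^{-1}}$ that guarantee the two sides of each defining relation are sent to the same functional.

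The heart of the proof is the convolution-inverse identity on generators. For example, writing $\phi_{2}: A(R) \to \widetilde{U(R)}^{*}$, $\phi_{2}(a)(\tilde u) = \langle \tilde u, a\rangle$, and using $\Delta(t^{k}_{l}) = \sum_{a} t^{k}_{a} \otimes t^{a}_{l}$ together with $\Delta((m^{+})^{i}_{j}) = \sum_{b} (m^{+})^{b}_{j} \otimes (m^{+})^{i}_{b}$, the required identity
$$\bigl(\phi_{2} \ast \mathbb{S}_{A(R)}\bigr)(t^{k}_{l})((m^{+})^{i}_{j}) \;=\; \delta^{k}_{l}\,\delta^{i}_{j}$$
becomes $\sum_{a,b} R^{bk}_{ja}\,\widetilde R^{ia}_{bl} = \delta^{k}_{l}\delta^{i}_{j}$. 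Substituting $\widetilde R^{ia}_{bl} = ((R^{t_{2}})^{-1})^{il}_{ba}$ and relabelling, this is exactly one entry of $R^{t_{2}}\cdot (R^{t_{2}})^{-1} = I$. The analogous identity paired with $(m^{-})^{i}_{j}$ collapses to $(R^{-1})^{t_{2}}\cdot((R^{-1})^{t_{2}})^{-1} = I$. The opposite-order convolutions $\mathbb{S}_{A(R)} \ast \phi_{2} = \eta\epsilon$ and their mirrors for $\mathbb{S}_{\widetilde{U(R)}}$ follow by the companion one-sided inverses.

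The main obstacle I expect is the well-definedness step: propagating the definition of $\mathbb{S}$'s from generators to products in a way consistent with all the FRT relations. For the single-color relations $R m^{\pm}_{1} m^{\pm}_{2} = m^{\pm}_{2} m^{\pm}_{1} R$ this reduces to QYBE applied to $\widetilde R$ (which is a standard translation of QYBE through the $t_{2}$ operation). The genuinely delicate case is the mixed relation $R m^{+}_{1} m^{-}_{2} = m^{-}_{2} m^{+}_{1} R$, where one must reconcile $\widetilde R$ with $\widetilde{R^{-1}}$; this requires combining QYBE with the assumed nonsingularity of both $R^{t_{2}}$ and $(R^{-1})^{t_{2}}$ to move $t_{2}$ across the braid-like relation, and it is precisely at this point that Majid's original statement needed the revision indicated in the proposition. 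Once these checks are completed, uniqueness of convolution inverses ensures that the candidates constructed from the generator formulas are indeed the required weak antipodes, and the matrix reformulations in the second display follow by unpacking the formulas.
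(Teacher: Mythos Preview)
Your proposal is correct and the core computation matches the paper's proof exactly: both reduce the convolution-inverse identities on generator pairs $(t^i_j,(m^\pm)^k_l)$ to the matrix identities $R^{t_2}(R^{t_2})^{-1}=I$ and $(R^{-1})^{t_2}((R^{-1})^{t_2})^{-1}=I$, which is precisely what the hypotheses supply.

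The one difference is emphasis. The paper simply evaluates $(\mathbb S_{A(R)}\ast j_{A(R)})(t^i_j)$ and $(j_{A(R)}\ast\mathbb S_{A(R)})(t^i_j)$ on $(m^\pm)^k_l$, obtains $\delta_{ij}\delta_{kl}$ in each case, and declares the convolution identities \eqref{inverse1}, \eqref{inverse2} proved; it never discusses how $\mathbb S$ is extended beyond the matrix subcoalgebras or whether that extension respects the FRT relations. You, by contrast, make the anti-multiplicative extension explicit and identify the compatibility with $RT_1T_2=T_2T_1R$ and $Rm_1^\pm m_2^\pm=m_2^\pm m_1^\pm R$, $Rm_1^+m_2^-=m_2^-m_1^+R$ as the ``main obstacle.'' That concern is legitimate, and your proposed route (QYBE pushed through the $t_2$-operation) would work. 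However, it is heavier than necessary: since $j_{A(R)}$ and $j_{\widetilde{U(R)}}$ are already algebra maps out of bialgebras generated by matrix subcoalgebras, convolution-invertibility on those subcoalgebras (which is exactly what the generator computation establishes) is known to force convolution-invertibility globally, with the inverse automatically anti-multiplicative. So the well-definedness you plan to verify by hand is in fact a consequence of the generator-level identity plus uniqueness of convolution inverses, which is presumably why the paper omits it.
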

\begin{proof}
The corresponding canonical maps for the dual pair $(\widetilde{U(R)}, A(R))$ are
$$
j_{A(R)}(t^{i}_{j})=\langle\cdot,t^{i}_{j}\rangle\in (\widetilde{U(R)})^{\ast},
\quad
j_{\widetilde{U(R)}}((m^{\pm})^{i}_{j})=\langle(m^{\pm})^{i}_{j},\cdot\rangle\in (A(R))^{\ast}.
$$
The units in the convolution algebras $\textrm{Hom}(A(R),(\widetilde{U(R)})^{\ast})$ and $\textrm{Hom}(\widetilde{U(R)},(A(R))^{\ast})$
are $\eta_{(\widetilde{U(R)})^{\ast}}\circ\epsilon_{A(R)}$ and $\eta_{(A(R))^{\ast}}\circ\epsilon_{\widetilde{U(R)}}$, respectively,
and
$$
\left.
\begin{array}{rl}
[(\eta_{(\widetilde{U(R)})^{\ast}}\circ\epsilon_{A(R)})(t^{i}_{j})]((m^{\pm})^{k}_{l})
&=\epsilon_{A(R)}(t^{i}_{j})\langle1,(m^{\pm})^{k}_{l}\rangle
=\epsilon_{A(R)}(t^{i}_{j})\epsilon_{\widetilde{U(R)}}((m^{\pm})^{k}_{l})\\
&=\delta_{ij}\delta_{kl}
=I^{ik}_{jl}.
\end{array}
\right.
$$
$
(\eta_{(A(R))^{\ast}}\circ\epsilon_{\widetilde{U(R)}})((m^{\pm})^{i}_{j})(t^{k}_{l})=I^{ik}_{jl}
$
can also be obtained.
For any $t^{i}_{j}$ and $(m^{\pm})^{k}_{l}$,
we have
\begin{gather}
\left.
\begin{array}{rl}
[(\mathbb{S}_{A(R)}\ast j_{A(R)})(t^{i}_{j})]((m^{+})^{k}_{l})
&=[\mathbb{S}_{A(R)}(t^{i}_{a})j_{A(R)}(t^{a}_{j})]((m^{+})^{k}_{l})\\
&=(\mathbb{S}_{A(R)}(t^{i}_{a}))((m^{+})^{b}_{l})(j_{A(R)}(t^{a}_{j}))((m^{+})^{k}_{b})\\
&=(\mathbb{S}_{A(R)}(t^{i}_{a}))((m^{+})^{b}_{l})\langle(m^{+})^{k}_{b},t^{a}_{j}\rangle\\
&=(R^{t_{2}})^{-1}{}^{ba}_{li}R^{ka}_{bj}
=(R^{t_{2}})^{-1}{}^{ba}_{li}R^{t_{2}}{}^{kj}_{ba}\\
&=(R^{t_{2}}(R^{t_{2}})^{-1})^{kj}_{li}
=I^{kj}_{li}=\delta_{ij}\delta_{kl}\\
&=[(\eta_{(\widetilde{U(R)})^{\ast}}\circ\epsilon_{A(R)})(t^{i}_{j})]((m^{+})^{k}_{l}).
\end{array}
\right.\label{weak1}\\
\left.
\begin{array}{rl}
[(\mathbb{S}_{A(R)}\ast j_{A(R)})(t^{i}_{j})]((m^{-})^{k}_{l})
&=[\mathbb{S}_{A(R)}(t^{i}_{a})j_{A(R)}(t^{a}_{j})]((m^{-})^{k}_{l})\\
&=(\mathbb{S}_{A(R)}(t^{i}_{a}))((m^{-})^{b}_{l})(j_{A(R)}(t^{a}_{j})((m^{-})^{k}_{b})\\
&=(\mathbb{S}_{A(R)}(t^{i}_{a}))((m^{-})^{b}_{l})\langle(m^{-})^{k}_{b},t^{a}_{j}\rangle\\
&=((R^{-1})^{t_{2}})^{-1}{}^{ba}_{li}(R^{-1})^{ak}_{jb}
=((R^{-1})^{t_{2}})^{-1}{}^{il}_{ab}(R^{-1})^{t_{2}}{}^{ab}_{jk}\\
&=((R^{-1})^{t_{2}})^{-1}(R^{-1})^{t_{2}})^{il}_{jk}
=I^{il}_{jk}=\delta_{ij}\delta_{kl}\\
&=[(\eta_{(\widetilde{U(R)})^{\ast}}\circ\epsilon_{A(R)})(t^{i}_{j})]((m^{-})^{k}_{l}).
\end{array}
\right.\label{weak2}\\
\left.
\begin{array}{rl}
[(j_{A(R)}\ast \mathbb{S}_{A(R)})(t^{i}_{j})]((m^{+})^{k}_{l})
&=[(j_{A(R)})(t^{i}_{a})\mathbb{S}_{A(R)}(t^{a}_{j})]((m^{+})^{k}_{l})\\
&=(j_{A(R)})(t^{i}_{a}))((m^{+})^{b}_{l})(\mathbb{S}_{A(R)}(t^{a}_{j}))((m^{+})^{k}_{b})\\
&=\langle(m^{+})^{b}_{l},t^{i}_{a}\rangle\mathbb{S}_{A(R)}(t^{a}_{j})((m^{+})^{k}_{b})\\
&=R^{bi}_{la}(R^{t_{2}})^{-1}{}^{kj}_{ba}
=R^{ba}_{li}(R^{t_{2}})^{-1}{}^{kj}_{ba}\\
&=((R^{t_{2}})^{-1}R^{t_{2}})^{kj}_{li}
=I^{kj}_{li}=\delta_{ij}\delta_{kl}\\
&=[(\eta_{(\widetilde{U(R)})^{\ast}}\circ\epsilon_{A(R)})(t^{i}_{j})]((m^{+})^{k}_{l}).
\end{array}
\right.\label{weak3}\\
\left.
\begin{array}{rl}
[(j_{A(R)}\ast \mathbb{S}_{A(R)})(t^{i}_{j})]((m^{-})^{k}_{l})
&=[(j_{A(R)})(t^{i}_{a})\mathbb{S}_{A(R)}(t^{a}_{j})]((m^{-})^{k}_{l})\\
&=(j_{A(R)})(t^{i}_{a}))((m^{-})^{b}_{l})(\mathbb{S}_{A(R)}(t^{a}_{j}))((m^{-})^{k}_{b})\\
&=\langle(m^{-})^{b}_{l},t^{i}_{a}\rangle\mathbb{S}_{A(R)}(t^{a}_{j})((m^{-})^{k}_{b})\\
&=(R^{-1})^{ib}_{al}((R^{-1})^{t_{2}})^{-1}{}^{ab}_{jk}
=((R^{-1})^{t_{2}})^{il}_{ab}((R^{-1})^{t_{2}})^{-1}{}^{ab}_{jk}\\
&=((R^{-1})^{t_{2}}((R^{-1})^{t_{2}})^{-1})^{il}_{jk}
=I^{il}_{jk}=\delta_{ij}\delta_{kl}\\
&=[(\eta_{(\widetilde{U(R)})^{\ast}}\circ\epsilon_{A(R)})(t^{i}_{j})]((m^{-})^{k}_{l}).
\end{array}
\right.\label{weak4}
\end{gather}
In view of equalities \eqref{weak1}--\eqref{weak4},
we obtain
\begin{equation}\label{inverse1}
\mathbb{S}_{A(R)}\ast j_{A(R)}=j_{A(R)}\ast \mathbb{S}_{A(R)}=\eta_{(\widetilde{U(R)})^{\ast}}\circ\epsilon_{A(R)}.
\end{equation}
We obtain the following equalities in a similar way.

$
\left.
\begin{array}{rl}
[(\mathbb{S}_{\widetilde{U(R)}}\ast j_{\widetilde{U(R)}})((m^{+})^{k}_{l})](t^{i}_{j})
&=(\mathbb{S}_{\widetilde{U(R)}}((m^{+})^{b}_{l})(t^{i}_{a}))(j_{\widetilde{U(R)}}((m^{+})^{k}_{b}))(t^{a}_{j})\\
&=(R^{t_{2}})^{-1}{}^{ba}_{li}R^{t_{2}}{}^{kj}_{ba}=I^{kj}_{li}=\delta_{ij}\delta_{kl}\\
&=[(\eta_{(A(R))^{\ast}}\circ\epsilon_{\widetilde{U(R)}})((m^{+})^{k}_{l})](t^{i}_{j}).
\end{array}
\right.$

$
\left.
\begin{array}{rl}
[(j_{\widetilde{U(R)}}\ast \mathbb{S}_{\widetilde{U(R)}})((m^{+})^{k}_{l})](t^{i}_{j})
&=(j_{\widetilde{U(R)}})((m^{+})^{b}_{l}))(t^{i}_{a})(\mathbb{S}_{\widetilde{U(R)}}((m^{+})^{k}_{b}))(t^{a}_{j})\\
&=(R^{t_{2}})^{ba}_{li}(R^{t_{2}})^{-1}{}^{kj}_{ba}
=I^{kj}_{li}=\delta_{ij}\delta_{kl}\\
&=[(\eta_{(A(R))^{\ast}}\circ\epsilon_{\widetilde{U(R)}})((m^{+})^{k}_{l})](t^{i}_{j}).
\end{array}
\right.$

$
\left.
\begin{array}{rl}
[(\mathbb{S}_{\widetilde{U(R)}}\ast j_{\widetilde{U(R)}})((m^{-})^{k}_{l})](t^{i}_{j})
&=(\mathbb{S}_{\widetilde{U(R)}}((m^{-})^{b}_{l}))(t^{i}_{a})(j_{\widetilde{U(R)}}((m^{-})^{k}_{b})(t^{a}_{j})\\
&=((R^{-1})^{t_{2}})^{-1}{}^{il}_{ab}(R^{-1})^{t_{2}}{}^{ab}_{jk}
=I^{il}_{jk}=\delta_{ij}\delta_{kl}\\
&=[(\eta_{(A(R))^{\ast}}\circ\epsilon_{\widetilde{U(R)}})((m^{-})^{k}_{l})](t^{i}_{j}).
\end{array}
\right.$

$
\left.
\begin{array}{rl}
[(j_{\widetilde{U(R)}}\ast \mathbb{S}_{\widetilde{U(R)}})((m^{-})^{k}_{l})](t^{i}_{j})
&=(j_{\widetilde{U(R)}})((m^{-})^{b}_{l}))(t^{i}_{a})(\mathbb{S}_{\widetilde{U(R)}}((m^{-})^{k}_{b}))(t^{a}_{j})\\
&=((R^{-1})^{t_{2}})^{il}_{ab}((R^{-1})^{t_{2}})^{-1}{}^{ab}_{jk}
=I^{il}_{jk}=\delta_{ij}\delta_{kl}\\
&=[(\eta_{(A(R))^{\ast}}\circ\epsilon_{\widetilde{U(R)}})((m^{-})^{k}_{l})](t^{i}_{j}).
\end{array}
\right.
$
Then we get
\begin{equation}\label{inverse2}
\mathbb{S}_{\widetilde{U(R)}}\ast j_{\widetilde{U(R)}}=j_{\widetilde{U(R)}}\ast \mathbb{S}_{\widetilde{U(R)}}=\eta_{(A(R))^{\ast}}\circ\epsilon_{\widetilde{U(R)}}.
\end{equation}
According to relations \eqref{inverse1} and \eqref{inverse2}, we
prove that $\mathbb{S}_{\widetilde{U(R)}}, \mathbb{S}_{A(R)}$
defined by \eqref{weak} in the Proposition are actually the weak
antipodes of the dual pairing $(\widetilde{U(R)},A(R))$.
\end{proof}
Now, we are only concerned with the bialgebra $\widetilde{U(R)}$ generated by the upper (lower) triangular matrices $m^{+}\ (m^{-})$ for
any invertible upper triangular matrix $n^{2}\times n^{2}$ $R$-matrix $R$. Starting from
$\widetilde{U(R)}$, we give the following
\begin{definition}
Define $\widehat{U(R)}$ as the quotient algebra of
$\widetilde{U(R)}$ modulo the biideal $D$ generated by
$$
(m^{+})^{i}_{i}(m^{-})^{i}_{i}-1, \quad (m^{-})^{i}_{i}(m^{+})^{i}_{i}-1,\qquad
i=1,2,\cdots n.
$$
The generators of $\widehat{U(R)}$ are still denoted by $m^{\pm}$ without confusion.
\end{definition}
Specially,
for the quotient algebra $\widehat{U(R)}$,
we have
\begin{theorem}\label{Hopf}
$\widehat{U(R)}$ obviously satisfies the following relations:
\begin{gather}
Rm^{\pm}_{1}m^{\pm}_{2}=m^{\pm}_{2}m^{\pm}_{1}R,\quad
Rm_{1}^{+}m_{2}^{-}=m_{2}^{-}m_{1}^{+}R,\label{new1}\\
(m^{+})^{i}_{i}(m^{-})^{i}_{i}=(m^{-})^{i}_{i}(m^{+})^{i}_{i}=1,\quad i=1,2,\cdots, n.\label{new2}
\end{gather}
Moreover,
$\widehat{U(R)}$ is a Hopf algebra with the comultiplication $\Delta$,
counit $\epsilon$,
and the antipode $S$ determined by
\begin{equation}
\Delta((m^{\pm})^{i}_{j})=(m^{\pm})_{j}^{a}\otimes (m^{\pm})_{a}^{i},\quad
\epsilon((m^{\pm})^{i}_{j})=\delta_{ij},\quad
S((m^{\pm})^{t})=((m^{\pm})^{t})^{-1}.
\end{equation}
\end{theorem}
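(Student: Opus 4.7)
The plan is to establish the Hopf algebra structure on $\widehat{U(R)}$ in three stages: verify that the ideal $D$ is a biideal of $\widetilde{U(R)}$ so the quotient inherits a bialgebra structure, construct the antipode by inverting the triangular matrices $m^{\pm}$ entrywise, and confirm the antipode axioms.

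For the first stage, the key observation is that since $m^+$ is upper triangular and $m^-$ is lower triangular, the coproduct formula $\Delta((m^\pm)^i_j) = \sum_a (m^\pm)^a_j \otimes (m^\pm)^i_a$ reduces on diagonal entries to $\Delta((m^\pm)^i_i) = (m^\pm)^i_i \otimes (m^\pm)^i_i$, because only $a = i$ keeps both factors nonzero. Hence
$$
\Delta\bigl((m^+)^i_i(m^-)^i_i - 1\bigr) = \bigl((m^+)^i_i(m^-)^i_i - 1\bigr)\otimes (m^+)^i_i(m^-)^i_i + 1\otimes\bigl((m^+)^i_i(m^-)^i_i - 1\bigr)
$$
lies in $D\otimes\widetilde{U(R)} + \widetilde{U(R)}\otimes D$, and an analogous identity holds for the other family of generators. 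Combined with $\epsilon$ vanishing on the generators of $D$, this shows $D$ is a biideal, so $\widehat{U(R)}$ is a bialgebra with the stated $\Delta$ and $\epsilon$; relations \eqref{new1} are inherited from $\widetilde{U(R)}$, while \eqref{new2} are the defining quotient relations.

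For the second stage, I observe that in $\widehat{U(R)}$ each triangular matrix $m^\pm$ has every diagonal entry invertible by \eqref{new2}, and therefore admits a two-sided triangular inverse $(m^\pm)^{-1}$ over $\widehat{U(R)}$, built by back-substitution row by row. Setting $N^\pm := (m^\pm)^t$, the coproduct takes the standard matrix form $\Delta(N^{\pm,i}_j) = \sum_a N^{\pm,i}_a \otimes N^{\pm,a}_j$, and I would define $S$ on generators by $S(N^\pm) := (N^\pm)^{-1}$ entrywise (equivalently $S((m^\pm)^t) = ((m^\pm)^t)^{-1}$), and extend $S$ as an algebra anti-homomorphism of $\widehat{U(R)}$.

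The main obstacle is to verify that this extension is well-defined on the defining relations \eqref{new1}. For each of $Rm^\pm_1 m^\pm_2 = m^\pm_2 m^\pm_1 R$ and $Rm_1^+ m_2^- = m_2^- m_1^+ R$, I would multiply both sides on the appropriate sides by the inverse matrices $(m_i^\pm)^{-1}$ to recast them as equivalent relations among the inverses, which is precisely what compatibility with the anti-multiplicative extension of $S$ demands; relations \eqref{new2} are manifestly $S$-stable since $S$ exchanges $(m^\pm)^i_i$ with its inverse. The antipode axioms $S * \textrm{id} = \eta\circ\epsilon = \textrm{id} * S$ then reduce, on the generators $N^{\pm,i}_j$, to the two-sided matrix identity $(N^\pm)^{-1} N^\pm = I = N^\pm (N^\pm)^{-1}$, which holds by construction. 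This completes the verification that $\widehat{U(R)}$ is a Hopf algebra with the stated antipode.
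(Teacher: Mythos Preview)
Your overall strategy coincides with the paper's: dispose of the biideal check (which the paper waves off as ``easily proved''), build the antipode by inverting the triangular matrices $(m^\pm)^t$, and read off the antipode axioms from $(N^\pm)^{-1}N^\pm = I = N^\pm(N^\pm)^{-1}$. Your first and third stages are fine, and your formula $S(N^\pm)=(N^\pm)^{-1}$ is exactly the paper's.

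The gap is in your well-definedness check. With $S((m^\pm)^i_j)=\bigl(((m^\pm)^t)^{-1}\bigr)^j_i=:(\widehat{m^\pm})^i_j$, applying the anti-homomorphism $S$ entrywise to $Rm^\pm_1m^\pm_2=m^\pm_2m^\pm_1R$ produces the requirement
\[
R\,\widehat{m^\pm}_2\,\widehat{m^\pm}_1=\widehat{m^\pm}_1\,\widehat{m^\pm}_2\,R,
\]
whereas your proposed move---conjugating by $(m_i^\pm)^{-1}$---yields instead
\[
R\,(m^\pm)^{-1}_2\,(m^\pm)^{-1}_1=(m^\pm)^{-1}_1\,(m^\pm)^{-1}_2\,R.
\]
Over a noncommutative ring these are different statements, because $\widehat{m^\pm}=[((m^\pm)^t)^{-1}]^t$ is \emph{not} $(m^\pm)^{-1}$: the identity $(A^t)^{-1}=(A^{-1})^t$ fails when entries do not commute. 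The paper closes this gap via Lemma~\ref{lemcomm}, which records the equivalence $RA_1B_2=B_2A_1R\Longleftrightarrow R^tB^t_2A^t_1=A^t_1B^t_2R^t$. One first transposes the relation to obtain $R^t(m^\pm)^t_2(m^\pm)^t_1=(m^\pm)^t_1(m^\pm)^t_2R^t$, \emph{then} multiplies by the inverses $((m^\pm)^t)^{-1}$, and finally transposes back to land on the required relation for $\widehat{m^\pm}$; the mixed relation $Rm_1^+m_2^-=m_2^-m_1^+R$ is handled the same way. So your sketch needs this extra transpose step on each side of the inversion to go through.
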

In order to prove the above theorem, we give the following Lemma,
which can be checked easily.
\begin{lemma}\label{lemcomm}
Suppose $R$ is any invertible $R$-matrix,
$A, B$ are matrices with non-commutative entries.
Then we have the following equivalent relations
\begin{gather}
RA_{1}B_{2}=B_{2}A_{1}R \Longleftrightarrow R^{t}B^{t}_{2}A^{t}_{1}=A^{t}_{1}B^{t}_{2}R^{t},\label{comm1}\\
RA_{2}B_{1}=B_{1}A_{2}R \Longleftrightarrow R^{t}B^{t}_{1}A^{t}_{2}=A^{t}_{2}B^{t}_{1}R^{t}.\label{comm2}
\end{gather}
\end{lemma}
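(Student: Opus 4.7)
The plan is to prove both equivalences by taking the full transpose $t=t_{1}t_{2}$ on $V\otimes V$ of the given relation and rearranging, exploiting two elementary index-level facts about matrices on $V\otimes V$ whose entries lie in a (non-commutative) algebra.

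First, since the entries of $R$ are scalars, they commute with the algebra-valued entries of $A_{1}$ and $B_{2}$; consequently, for any matrix $M$ on $V\otimes V$ with non-commutative entries one has $(RM)^{t}=M^{t}R^{t}$ and $(MR)^{t}=R^{t}M^{t}$, as is checked in one line by swapping the index pairs $(ij)\leftrightarrow(kl)$ and using that the scalar $R$-entry can be freely moved past the algebra entry. Second, because $A_{1}$ acts in the first tensor slot and $B_{2}$ in the second, the (at first glance surprising) identity $(A_{1}B_{2})^{t}=A_{1}^{t}B_{2}^{t}$ holds --- the order is preserved, not reversed. This follows directly from the index formula $(A_{1}B_{2})^{ij}_{kl}=A^{i}_{k}B^{j}_{l}$, so that $\bigl((A_{1}B_{2})^{t}\bigr)^{ij}_{kl}=A^{k}_{i}B^{l}_{j}$, which matches $(A_{1}^{t}B_{2}^{t})^{ij}_{kl}$ as one computes directly. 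The analogues $(B_{2}A_{1})^{t}=B_{2}^{t}A_{1}^{t}$, $(A_{2}B_{1})^{t}=A_{2}^{t}B_{1}^{t}$, and $(B_{1}A_{2})^{t}=B_{1}^{t}A_{2}^{t}$ are verified the same way.

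Combining these two observations, the transpose of the left-hand side of \eqref{comm1} becomes $(RA_{1}B_{2})^{t}=(A_{1}B_{2})^{t}R^{t}=A_{1}^{t}B_{2}^{t}R^{t}$, while the transpose of the right-hand side is $(B_{2}A_{1}R)^{t}=R^{t}(B_{2}A_{1})^{t}=R^{t}B_{2}^{t}A_{1}^{t}$. Equating gives exactly the right-hand equation of \eqref{comm1}; the reverse implication is immediate because transposition is an involution. The equivalence \eqref{comm2} is proved identically by replacing $A_{1}B_{2}$ with $A_{2}B_{1}$ throughout.

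There is essentially no obstacle beyond careful bookkeeping. The only subtle point is the identity $(A_{1}B_{2})^{t}=A_{1}^{t}B_{2}^{t}$: one might reflexively invoke the usual $(MN)^{t}=N^{t}M^{t}$ rule and reverse the order of factors, but this would require commuting algebra-valued entries of $A$ past those of $B$, which is not permitted. The direct index-level verification sidesteps this cleanly, since the actions on disjoint tensor slots mean that no entry of $A$ and $B$ is ever interchanged in the computation.
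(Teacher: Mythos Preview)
Your argument is correct and is exactly the kind of direct index verification the paper has in mind when it says the lemma ``can be checked easily'' without giving details. The two key identities you isolate, $(RM)^{t}=M^{t}R^{t}$ for scalar-entried $R$ and $(A_{1}B_{2})^{t}=A_{1}^{t}B_{2}^{t}$ (order preserved), are precisely what makes the computation a one-liner, and your index calculation $(A_{1}B_{2})^{ij}_{kl}=A^{i}_{k}B^{j}_{l}$ is the right way to see the second one without illegitimately commuting algebra-valued entries.
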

\noindent{\it Proof of Theorem \ref{Hopf}.} The other relations in
Theorem \ref{Hopf} can be proved easily, so we only focus on the
antipode. The lower (upper) triangular matrices $(m^{+})^{t}~
((m^{-})^{t})$ are invertible in virtue of $(m^{+})^{i}_{i}
(m^{-})^{i}_{i}=(m^{-})^{i}_{i}(m^{+})^{i}_{i}=1$ in
$\widehat{U(R)}$, then we denote the corresponding matrices
$[((m^{\pm})^{t})^{-1}]^{t}$ by $\widehat{m^{\pm}}$, and the entry
located at row $i$ and column $j$ by $(\widehat{m^{\pm}})^{i}_{j}$.
We build the following $\mathbb{C}[q,q^{-1}]$-linear map
\begin{equation}\label{hom}
\left.
\begin{array}{rll}
S: \widehat{U(R)} &\longrightarrow &\widehat{U(R)}\\
(m^{\pm})^{i}_{j}&\longrightarrow&(\widehat{m^{\pm}})^{i}_{j}: \quad S(m^{\pm})=\widehat{m^{\pm}}.
\end{array}
\right.
\end{equation}
According to Lemma \ref{lemcomm},
we have
\begin{equation}\label{neww1}
\left.
\begin{array}{rl}
&Rm^{\pm}_{1}m^{\pm}_{2}=m^{\pm}_{2}m^{\pm}_{1}R\\
& \Longleftrightarrow
R^{t}(m^{\pm})^{t}_{2}(m^{\pm})^{t}_{1}=(m^{\pm})^{t}_{1}(m^{\pm})^{t}_{2}R^{t}\\
&\Longleftrightarrow
((m^{\pm})^{t})^{-1}_{2}((m^{\pm})^{t})^{-1}_{1}R^{t}=R^{t}((m^{\pm})^{t})^{-1}_{1}((m^{\pm})^{t})^{-1}_{2}\\
&\Longleftrightarrow
R[((m^{\pm})^{t})^{-1}]^{t}_{2}[((m^{\pm})^{t})^{-1}]^{t}_{1}
=[((m^{\pm})^{t})^{-1}]^{t}_{1}[((m^{\pm})^{t})^{-1}]^{t}_{2}R\\
&\Longleftrightarrow
R\widehat{m^{\pm}}_{2}\widehat{m^{\pm}}_{1}=\widehat{m^{\pm}}_{1}\widehat{m^{\pm}}_{2}R.
\end{array}
\right.
\end{equation}
\begin{equation}\label{neww2}
\left.
\begin{array}{rl}
&Rm^{+}_{1}m^{-}_{2}=m^{-}_{2}m^{+}_{1}R\\
& \Longleftrightarrow
R^{t}(m^{-})^{t}_{2}(m^{+})^{t}_{1}=(m^{+})^{t}_{1}(m^{-})^{t}_{2}R^{t}\\
&\Longleftrightarrow
((m^{-})^{t})^{-1}_{2}((m^{+})^{t})^{-1}_{1}R^{t}=R^{t}((m^{+})^{t})^{-1}_{1}((m^{-})^{t})^{-1}_{2}\\
&\Longleftrightarrow
R[((m^{-})^{t})^{-1}]^{t}_{2}[((m^{+})^{t})^{-1}]^{t}_{1}
=[((m^{+})^{t})^{-1}]^{t}_{1}[((m^{-})^{t})^{-1}]^{t}_{2}R\\
&\Longleftrightarrow
R\widehat{m^{-}}_{2}\widehat{m^{+}}_{1}=\widehat{m^{+}}_{1}\widehat{m^{-}}_{2}R.
\end{array}
\right.
\end{equation}
We obtain the diagonal entries $(\widehat{m^{\pm}})^{i}_{i}=(m^{\mp})^{i}_{i}$ in view of the definition of $\widehat{m^{\pm}}$,
then \begin{equation}\label{neww3}
(\widehat{m^{+}})^{i}_{i}(\widehat{m^{-}})^{i}_{i}=(\widehat{m^{-}})^{i}_{i}(\widehat{m^{+}})^{i}_{i}=1.
\end{equation}
So the map $S$ is the anti-algebra automorphism of $\widehat{U(R)}$ in virtue of \eqref{neww1}--\eqref{neww3}.
By the equalities $(m^{\pm})^{t}((m^{\pm})^{t})^{-1}=((m^{\pm})^{t})^{-1}(m^{\pm})^{t}=I$,
we obtain the following equivalent relations
$$
\left.
\begin{array}{rl}
&((m^{\pm})^{t})^{i}_{a}[((m^{\pm})^{t})^{-1}]^{a}_{j}=[((m^{\pm})^{t})^{-1}]^{i}_{a}((m^{\pm})^{t})^{a}_{j}=\delta_{ij}\\
&\Longleftrightarrow
(m^{\pm})^{a}_{i}(\widehat{m^{+}})^{j}_{a}=(\widehat{m^{+}})^{a}_{i}(m^{\pm})^{j}_{a}=\delta_{ij}\\
&\Longleftrightarrow
(m^{\pm})^{a}_{i}(Sm^{\pm})^{j}_{a}=(Sm^{\pm})^{a}_{i}(m^{\pm})^{j}_{a}=\delta_{ij}\\
&\Longleftrightarrow
[m\circ(\textrm{id}\otimes S)\circ\Delta]((m^{\pm})^{j}_{i})=
[m\circ(S\otimes \textrm{id})\circ\Delta]((m^{\pm})^{j}_{i})=\delta_{ij}=(\eta\circ\epsilon)((m^{\pm})^{j}_{i}).
\end{array}
\right.
$$
Obviously, we obtain $m\circ(\textrm{id}\otimes
S)\circ\Delta=m\circ(S\otimes
\textrm{id})\circ\Delta=\eta\circ\epsilon.$ So we prove that the map
$S$ is the antipode of $\widehat{U(R)}$.
\hfill $\Box$

Let $T_{V}$ be an irreducible representation of $U_{h}(\mathfrak{g})$,
suppose that there exists a basis such that
the corresponding $R$-matrix $R_{VV}$ is upper triangular,
and $L$-functionals $L^{\pm}$ are upper (lower) triangular.
Then from the structures of bialgebras $U(\text{L}^{\pm})$ and $\widetilde{U(R_{VV})}$,
we obtained easily \cite{HH} that the antipode $S$ gives the morphism from the bialgebra  $U(\text{L}^{\pm})$ generated by $\text{L}^{\pm}$ to $\widetilde{U(R_{VV})}$.
On the other hand,
we notice that the equality $l^{+}_{ii}l^{-}_{ii}=l^{-}_{ii}l^{+}_{ii}=1$ can be obtained by the definition of $L$-functionals,
so the above morphism can be induced on Hopf algebra $\widehat{U(R_{VV})}$,
namely,

\begin{center}
\setlength{\unitlength}{1mm}
\begin{picture}(50,30)
\put(5,25){$U(\text{L}^{\pm})$}
\put(25,27){$S$}
\put(15,26){\line(1,0){25}}
\put(39,25){$>$}
\put(40.5,25){$\widetilde{U(R_{VV})}$}
\put(44.2,4.5){\line(0,1){20}}
\put(43,3){$\vee$}
\put(43,5){$\vee$}
\put(40,0){$\widehat{U(R_{VV})}$}
\put(9,23){\vector(3,-2){30}}
\put(20,10){$S$}
\end{picture}
\end{center}

Thence,
we obtain the following
\begin{lemma}\label{m+}
Under the antipode $S$ of $U_{h}(\mathfrak{g})$,
the expression of
generators $m^{\pm}$ for $\widehat{U(R_{VV})}$ can be
obtained by the $L$-functionals of $U_{h}(\mathfrak{g})$ associated
with representation $T_{V}$.
\end{lemma}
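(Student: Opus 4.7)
The plan is to make rigorous the commutative triangle displayed just above the statement. The $L$-functionals $l_{ij}^{\pm}$ associated with $T_V$ generate a bialgebra $U(\mathrm{L}^{\pm})$ satisfying precisely $\mathrm{L}_1^{\pm}\mathrm{L}_2^{\pm}R_{VV}=R_{VV}\mathrm{L}_2^{\pm}\mathrm{L}_1^{\pm}$, $\mathrm{L}_1^{+}\mathrm{L}_2^{-}R_{VV}=R_{VV}\mathrm{L}_2^{-}\mathrm{L}_1^{+}$, $\Delta(\mathrm{L}^{\pm})=\mathrm{L}^{\pm}\otimes\mathrm{L}^{\pm}$, $\epsilon(\mathrm{L}^{\pm})=I$, which are formally identical to the defining relations of $\widetilde{U(R_{VV})}$ on generators $m^{\pm}$. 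Hence the canonical map $m^{\pm}_{ij}\mapsto \mathrm{L}^{\pm}_{ij}$ already yields a bialgebra morphism $\widetilde{U(R_{VV})}\to U(\mathrm{L}^{\pm})$; what I want is the reverse map into $\widehat{U(R_{VV})}$, realized through the antipode $S$ of $U_h(\mathfrak g)$.

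First I would check that the diagonal $L$-functionals are invertible in $U_h(\mathfrak g)$ with $l^{+}_{ii}l^{-}_{ii}=l^{-}_{ii}l^{+}_{ii}=1$. This is immediate from the explicit form \eqref{imp2}: the diagonal part of $\mathcal{R}$ is $\exp(h\sum_{i,j}B_{ij}H_i\otimes H_j)$, so under $(\mathrm{id}\otimes T_V)$ and $(T_V\otimes\mathrm{id})(S\otimes\mathrm{id})$ the diagonal entries of $\mathrm{L}^{+}$ and $\mathrm{L}^{-}$ are mutually inverse group-like exponentials of the toral part acting on $V$. With this in hand, the map $\mathrm{L}^{\pm}\mapsto m^{\pm}$ descends to a well-defined bialgebra map into the quotient $\widehat{U(R_{VV})}$, as the extra relations $(m^{\pm})^{i}_{i}(m^{\mp})^{i}_{i}=1$ are already satisfied by the $L$-functionals.

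Next I would invoke the antipode $S$ of $U_h(\mathfrak g)$. Applying $S$ to the $\mathrm{L}\mathrm{L}$-relations reverses multiplicative order, and Lemma \ref{lemcomm} shows that after the requisite transpositions these rewritten relations become the $m^{\pm}$-relations \eqref{new1} of $\widehat{U(R_{VV})}$. Equivalently, $S$ converts the morphism $\mathrm{L}^{\pm}\mapsto m^{\pm}$ of bialgebras into one of the opposite orientation on the underlying algebras; the antipodes of $\widehat{U(R_{VV})}$ (constructed in Theorem \ref{Hopf}) and of $U_h(\mathfrak g)$ then make the triangle in the diagram commute. Reading this across, every generator $m^{\pm}_{ij}$ of $\widehat{U(R_{VV})}$ is exhibited as the image of $S(\mathrm{L}^{\mp}_{ji})$ (up to the matrix-transpose accounting forced by Lemma \ref{lemcomm}), which is an explicit polynomial expression in the $L$-functionals of $U_h(\mathfrak g)$ associated to $T_V$.

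The main obstacle is really bookkeeping rather than conceptual: one has to track carefully how the anti-multiplicativity of $S$, the transposition identities of Lemma \ref{lemcomm}, and the upper/lower triangularity of $\mathrm{L}^{+}/\mathrm{L}^{-}$ and of $R_{VV}$ combine, so that $S(\mathrm{L}^{+})$ indeed matches the upper-triangular generator $m^{+}$ and $S(\mathrm{L}^{-})$ the lower-triangular $m^{-}$, and so that the coproducts and counits line up on the nose. Once that matching is verified, well-definedness on the quotient is automatic from the diagonal identity $l^{+}_{ii}l^{-}_{ii}=1$, and the lemma follows.
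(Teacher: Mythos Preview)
Your overall strategy---relate $m^{\pm}$ to the $L$-functionals via the antipode $S$, verify $l^{+}_{ii}l^{-}_{ii}=1$, then pass to the quotient $\widehat{U(R_{VV})}$---is the paper's argument preceding the lemma. However, your first paragraph contains a genuine error that obscures the point. You claim the $L$-relations $L_1^{\pm}L_2^{\pm}R=RL_2^{\pm}L_1^{\pm}$ are ``formally identical'' to the $m$-relations $Rm_1^{\pm}m_2^{\pm}=m_2^{\pm}m_1^{\pm}R$, so that $(m^{\pm})^{i}_{j}\mapsto l^{\pm}_{ij}$ is already a bialgebra morphism. It is not: in entry form the $L$-relation reads $\sum R^{ij}_{ab}\,l^{\pm}_{bl}l^{\pm}_{ak}=\sum l^{\pm}_{ic}l^{\pm}_{jd}R^{cd}_{kl}$, whereas the $m$-relation reads $\sum R^{ij}_{ab}(m^{\pm})^{a}_{k}(m^{\pm})^{b}_{l}=\sum (m^{\pm})^{j}_{d}(m^{\pm})^{i}_{c}R^{cd}_{kl}$; these differ precisely by reversing the order of multiplication. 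The coproducts likewise differ, $\Delta(l^{\pm}_{ij})=\sum_a l^{\pm}_{ia}\otimes l^{\pm}_{aj}$ versus $\Delta((m^{\pm})^{i}_{j})=\sum_a(m^{\pm})^{a}_{j}\otimes(m^{\pm})^{i}_{a}$. So the naive assignment is an anti-algebra, anti-coalgebra map, not a bialgebra map.

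This is exactly why the antipode is the essential ingredient rather than an afterthought. Because $S$ is anti-multiplicative and anti-comultiplicative on $U_h(\mathfrak g)$, the assignment $(m^{\pm})^{i}_{j}\mapsto S(l^{\pm}_{ij})$---not $S(l^{\mp}_{ji})$ as you write---is the genuine bialgebra morphism; no transpositions via Lemma~\ref{lemcomm} are needed, only the anti-multiplicativity of $S$. The diagonal identities $l^{+}_{ii}l^{-}_{ii}=l^{-}_{ii}l^{+}_{ii}=1$ then give $S(l^{\mp}_{ii})S(l^{\pm}_{ii})=1$, so the extra relations defining $\widehat{U(R_{VV})}$ are respected and the morphism factors through, exhibiting each $(m^{\pm})^{i}_{j}$ concretely as $S(l^{\pm}_{ij})\in U_h(\mathfrak g)$. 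Your third paragraph gestures toward this, but treating $S$ as a correction layered on top of an already-valid direct map misidentifies where the content lies.
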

From the explicit form \eqref{imp2} of universal $R$-matrix $\mathcal{R}$ for
$U_{h}(\mathfrak{g})$, we know that there exist factors of form
$\textrm{exp}(a_{i}hH_{i})$'s in entries of $m^{\pm}$, and these
rational numbers $a_{i}$'s are not integers in general. When
$R_{VV}$ is one of standard $R$-matrices, $\widehat{U(R_{VV})}$
coincides with the extended Hopf algebra
$U_{q}^{\textrm{ext}}(\mathfrak{g})$ with
$K_{i}^{a_{i}}(=\textrm{exp}(a_{i}hH_{i}))$ adjoined (see
\cite{klim}). In view of this fact, we still use notation
$U_{q}^{\textrm{ext}}(\mathfrak{g})$ instead of
$\widehat{U(R_{VV})}$ for $R_{VV}$ (not necessarily standard).

\subsection{The coquasitriangularity of Hopf algebra $H_{R_{VV}}$}
After introducing Hopf algebra $H_{R}$ in the preceding section, we
will furthermore consider these Hopf algebras for the general
$R$-matrices, not necessarily standard $R$-matrices. As we know, for
the standard $R$-matrices, $H_{R}$'s are the coquasitriangular Hopf
algebras. What about $H_{R}$ for the general $R$-matrices? Firstly,
we give some equalities about $R$-matrix under some conditions.
\begin{proposition}\label{lematrix}
Suppose $R$ is an invertible $R$-matrix,
and the matrices $R^{t_{i}}, (R^{-1})^{t_{i}}, i=1,2$ are also invertible,
then we have the following equalities
\begin{gather}
(R^{t})_{12}(R^{t})_{13}(R^{t})_{23}=(R^{t})_{23}(R^{t})_{13}(R^{t})_{12},\label{R000}\\
R_{12}(R^{t_{2}})^{-1}_{13}(R^{t_{2}})^{-1}_{23}
=(R^{t_{2}})^{-1}_{23}(R^{t_{2}})^{-1}_{13}R_{12},\label{R0}\\
(R^{t_{2}})_{13}(R^{t_{2}})_{23}R_{12}=R_{12}(R^{t_{2}})_{23}(R^{t_{2}})_{13},\label{R1}\\
R_{13}(R^{t_{2}})^{-1}_{12}((R^{-1})^{t_{1}})^{-1}_{23}
=((R^{-1})^{t_{1}})^{-1}_{23}(R^{t_{2}})^{-1}_{12}R_{13},\label{R2}\\
R_{23}((R^{-1})^{t_{1}})^{-1}_{12}((R^{-1})^{t_{1}})^{-1}_{13}
=((R^{-1})^{t_{1}})^{-1}_{13}((R^{-1})^{t_{1}})^{-1}_{12}R_{23},\label{R3}\\
(R^{t})_{23}(R^{t_{2}})^{-1}_{13}(R^{t_{2}})^{-1}_{12}=(R^{t_{2}})^{-1}_{12}(R^{t_{2}})^{-1}_{13}(R^{t})_{23},\label{R4}\\
(R^{t})_{12}((R^{-1})^{t_{1}})^{-1}_{23}((R^{-1})^{t_{1}})^{-1}_{13}
=((R^{-1})^{t_{1}})^{-1}_{13}((R^{-1})^{t_{1}})^{-1}_{23}(R^{t})_{12},\label{R5}\\
(R^{t_{2}})^{-1}_{23}((R^{-1})^{t_{1}})^{-1}_{13}R_{12}
=R_{12}(R^{t_{2}})^{-1}_{13}((R^{-1})^{t_{1}})^{-1}_{23},\label{R6}\\
(R^{t_{2}})^{-1}_{12}((R^{-1})^{t_{1}})^{-1}_{23}R^{-1}_{13}
=R^{-1}_{13}((R^{-1})^{t_{1}})^{-1}_{23}(R^{t_{2}})^{-1}_{12},\label{R7}\\
(R^{t_{2}})^{-1}_{23}((R^{t})_{13}((R^{-1})^{t_{1}})_{12}
=((R^{-1})^{t_{1}})_{12}((R^{t})_{13}(R^{t_{2}})^{-1}_{23}.\label{R00}
\end{gather}
\end{proposition}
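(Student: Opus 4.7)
The plan is to derive each of the ten identities from the Yang--Baxter equation $R_{12}R_{13}R_{23}=R_{23}R_{13}R_{12}$ (and from the same equation applied to $R^{-1}$, which is also an invertible $R$-matrix), by combining partial transpositions on the three-fold tensor product $V\otimes V\otimes V$ with inversion and with conjugation by an $R_{ij}^{\pm 1}$.

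The three ``unmixed'' identities \eqref{R000}, \eqref{R1}, \eqref{R0} are the easiest. For \eqref{R000} I would take the full transpose of QYBE, using $(XYZ)^{t}=Z^{t}Y^{t}X^{t}$ to reverse the order of the three factors; the result is precisely QYBE for $R^{t}$. For \eqref{R1} I would apply the partial transpose $t_{3}$ (transposition in the third tensor slot) to both sides: since $R_{12}$ does not act on slot~$3$ while $R_{13}$ and $R_{23}$ do, and since $t_{3}$ transposes each of the latter in slot $3$ and reverses their slot-$3$ order, one obtains $(R_{12}R_{13}R_{23})^{t_{3}}=R_{12}(R^{t_{2}})_{23}(R^{t_{2}})_{13}$ and $(R_{23}R_{13}R_{12})^{t_{3}}=(R^{t_{2}})_{13}(R^{t_{2}})_{23}R_{12}$; equating them gives \eqref{R1}. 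Then \eqref{R0} follows from inverting \eqref{R1}, using $(XYZ)^{-1}=Z^{-1}Y^{-1}X^{-1}$, and multiplying through by $R_{12}$ on the appropriate side. In the same spirit, \eqref{R3} is obtained by applying $t_{1}$ to the QYBE for $R^{-1}$ and then inverting.

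The six ``mixed'' identities \eqref{R2}, \eqref{R4}--\eqref{R7}, \eqref{R00} need an additional step, because they involve both kinds of twisted inverses, $(R^{t_{2}})^{-1}$ and $((R^{-1})^{t_{1}})^{-1}$, which are genuinely different operators (in general $(R^{t_{2}})^{-1}\ne (R^{-1})^{t_{2}}$, and likewise with $t_{1}$). My strategy is to start from the already-derived identities together with their $R\leftrightarrow R^{-1}$ and $(ij)$-relabelled analogues, and then apply one further partial transpose plus an inversion. For instance, \eqref{R4} follows from \eqref{R0} by a further $t_{1}$-transpose combined with \eqref{R000} (to translate $R$-factors into $R^{t}$-factors), while \eqref{R2} and \eqref{R7} come from combining the $t_{3}$-transposed QYBE for $R^{-1}$ with the already-established \eqref{R1}, followed by inversion. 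In each case the calculation decomposes into the same three elementary steps: (i) transpose in a single slot, (ii) invert both sides, (iii) multiply through by an $R_{ij}^{\pm 1}$ to clear an isolated factor.

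The main technical obstacle is that partial transposition $t_{i}$ does not commute with operator multiplication on $V^{\otimes 3}$ in a clean way. If the two factors that act on slot $i$ sit adjacent in the product, then $t_{i}$ transposes each of them in slot $i$ and reverses their order, giving a nice factorisation; but if a third factor that does not touch slot $i$ is wedged between them (as $R_{13}$ is between $R_{12}$ and $R_{23}$ in QYBE under $t_{2}$), no direct factorised rewriting is possible. This is precisely why $t_{2}$ of QYBE does not produce any listed identity directly, and why the mixed identities must be reached circuitously through $t_{1}$, $t_{3}$, inversion, and conjugation. Careful bookkeeping of which tensor leg lives in which slot, and of the order-reversal produced by transposition in any shared slot, is the only substantive difficulty.
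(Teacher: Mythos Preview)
Your proposal is correct and is precisely the approach the paper has in mind: the paper's entire proof reads ``All equalities follow mainly from the equality $R_{12}R_{13}R_{23}=R_{23}R_{13}R_{12}$ and some skills of matrices,'' with no further detail. Your derivations via partial transpositions $t_1,t_3$, inversion, and conjugation by $R_{ij}^{\pm1}$ are exactly the ``skills of matrices'' being alluded to, and your diagnosis of why $t_2$ on QYBE fails to factor cleanly (the middle factor $R_{13}$ obstructs the slot-$2$ reversal) is accurate; in effect you have supplied the argument the paper omits.
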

\begin{proof}
All equalities follow mainly from the equality $R_{12}R_{13}R_{23}=R_{23}R_{13}R_{12}$ and some skills of matrices.
\end{proof}

\begin{theorem}\label{cotheo}
Suppose $R$ is an invertible $R$-matrix,
the matrices $R^{t_{i}}, (R^{-1})^{t_{i}}, i=1,2$ are also invertible,
if these matrices satisfy the condition \eqref{chap21},
then $H_{R}$ is a coquasitriangular Hopf algebra with its coquasitriangular structure
given by
\begin{gather}
\mathfrak{r}(t^{i}_{j}\otimes t^{k}_{l})=R^{ik}_{jl},\quad
\mathfrak{r}(\tilde{t}^{i}_{j}\otimes \tilde{t}^{k}_{l})=(R^{t})^{ik}_{jl},\label{coquasi}\\
\mathfrak{r}(\tilde{t}^{i}_{j}\otimes t^{k}_{l})=((R^{-1})^{t_{1}})^{ik}_{jl},\quad
\mathfrak{r}(t^{i}_{j}\otimes \tilde{t}^{k}_{l})=((R^{t_{2}})^{-1})^{ik}_{jl}.\label{coquasi1}
\end{gather}
The equivalent statement in matrices is
\begin{gather}
\mathfrak{r}(T_{1}\otimes T_{2})=R_{12},
\quad\mathfrak{r}(\tilde{T}_{1}\otimes \tilde{T}_{2})=(R^{t})_{12},\label{coquasi2}\\
\mathfrak{r}(\tilde{T}_{1}\otimes T_{2})=((R^{-1})^{t_{1}})_{12},\quad
\mathfrak{r}(T_{1}\otimes \tilde{T}_{2})=((R^{t_{2}})^{-1})_{12}.\label{coquasi3}
\end{gather}
\end{theorem}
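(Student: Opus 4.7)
The strategy is to define $\mathfrak{r}$ on the generators of $H_R\otimes H_R$ via the formulas \eqref{coquasi2}--\eqref{coquasi3}, to extend it to arbitrary products by forcing the multiplicativity axioms of Definition \ref{decoquasi}(ii), and then to use the matrix identities of Proposition \ref{lematrix} to verify compatibility with the defining relations \eqref{tilde} of $H_R$, so that $\mathfrak{r}$ descends to a well-defined linear form on $H_R\otimes H_R$. Once well-definedness is in hand, the coquasitriangularity axioms will be essentially built into the construction.

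First I would extend $\mathfrak{r}$ to products in each slot by iterating
$\mathfrak{r}(ab\otimes c)=\mathfrak{r}(a\otimes c_{(1)})\,\mathfrak{r}(b\otimes c_{(2)})$ and $\mathfrak{r}(a\otimes bc)=\mathfrak{r}(a_{(1)}\otimes c)\,\mathfrak{r}(a_{(2)}\otimes b)$, following the standard FRT template. Well-definedness then reduces to showing that the three families of quadratic relations in \eqref{tilde}, when inserted into either slot of $\mathfrak{r}$ and paired against a generator (either $T$ or $\tilde T$) in the remaining slot, produce equal outputs on both sides. This gives a finite list of matrix identities to verify: the relation $RT_1T_2=T_2T_1R$ paired against $T$ is the QYBE itself, and against $\tilde T$ reduces via \eqref{R1} and \eqref{R0}; the relation $R^t\tilde T_1\tilde T_2=\tilde T_2\tilde T_1 R^t$ paired against $\tilde T$ is \eqref{R000}, and against $T$ reduces via \eqref{R4} and \eqref{R5}; the mixed relation $(R^{t_2})^{-1}T_1\tilde T_2=\tilde T_2T_1(R^{t_2})^{-1}$ absorbs the remaining identities \eqref{R2}, \eqref{R3}, \eqref{R6}, \eqref{R7}, \eqref{R00}.

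Next, for the convolution invertibility condition Definition \ref{decoquasi}(i), I would define $\bar{\mathfrak{r}}$ on generators by $\bar{\mathfrak{r}}(T_1\otimes T_2)=R_{12}^{-1}$, $\bar{\mathfrak{r}}(\tilde T_1\otimes\tilde T_2)=((R^t)_{12})^{-1}$, $\bar{\mathfrak{r}}(\tilde T_1\otimes T_2)=(R^{-1})^{t_1}_{12}$, $\bar{\mathfrak{r}}(T_1\otimes\tilde T_2)=(R^{t_2})_{12}$, extend multiplicatively in the same manner, and check $\mathfrak{r}*\bar{\mathfrak{r}}=\bar{\mathfrak{r}}*\mathfrak{r}=\epsilon\otimes\epsilon$ directly on generators as an elementary matrix computation. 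The quasi-commutation axiom $\cdot\circ\tau=\mathfrak{r}*\cdot*\mathfrak{r}^{-1}$ then holds tautologically on pairs of generators because, once rewritten in index form, it is exactly the defining relations \eqref{tilde} of $H_R$; it extends to arbitrary elements of $H_R$ via the already-established multiplicativity in each slot. The condition \eqref{chap21} is invoked only in the background, as what guarantees that $H_R$ carries a compatible antipode so that the whole structure lives in the Hopf-algebraic framework of Definition \ref{decoquasi}.

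The hard part will be the mixed relation $(R^{t_2})^{-1}T_1\tilde T_2=\tilde T_2T_1(R^{t_2})^{-1}$. Unlike the pure $T$ or pure $\tilde T$ relations, it couples the two families of generators, so its compatibility checks involve three-index identities that bridge $R$, its partial transposes $R^{t_1},R^{t_2},R^t$, and all of their inverses. These are precisely the less symmetric identities \eqref{R2}--\eqref{R7} and \eqref{R00} of Proposition \ref{lematrix}, and matching each of the possible placements of $T$ or $\tilde T$ in the third slot with the correct identity, while keeping track of which slot the multiplication is occurring in and hence which hexagon axiom applies, is the delicate bookkeeping that dominates the technical work.
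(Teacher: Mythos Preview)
Your approach is essentially the same as the paper's: define $\mathfrak{r}$ and $\bar{\mathfrak{r}}$ on the free algebra, extend by the hexagon rules, verify compatibility with the ideal of relations via the identities of Proposition~\ref{lematrix}, and then check the coquasitriangular axioms on generators. Two small corrections: your formula $\bar{\mathfrak{r}}(\tilde T_1\otimes T_2)=(R^{-1})^{t_1}_{12}$ should read $(((R^{-1})^{t_1})^{-1})_{12}$, as the convolution-inverse requires the matrix inverse; and the quasi-commutation check on the pair $\tilde T_1\otimes T_2$ is not literally one of the three relations in \eqref{tilde} but the derived identity $(R^{-1})^{t_1}\tilde T_1T_2=T_2\tilde T_1(R^{-1})^{t_1}$, which the paper obtains from \eqref{tilde} using $T\tilde T^t=\tilde T^tT=I$.
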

\begin{proof}
Let $\mathbb{C}\langle t^{i}_{j},\widetilde{t}^{i}_{j}\rangle$ denote the free algebra generated by $1$ and $t^{i}_{j},\widetilde{t}^{i}_{j}$.
We first construct two linear forms $\mathfrak{r}$  and $\bar{\mathfrak{r}}$ on
 $\mathbb{C}\langle t^{i}_{j},\widetilde{t}^{i}_{j}\rangle\otimes\mathbb{C}\langle t^{i}_{j},\widetilde{t}^{i}_{j}\rangle$.
To do so,
we define  $\mathfrak{r}$ and  $\bar{\mathfrak{r}}$ by \eqref{coquasi2}, \eqref{coquasi3}, \eqref{coquasi4} and \eqref{coquasi5} below, respectively.
\begin{gather}
\bar{\mathfrak{r}}(T_{1}\otimes T_{2})=(R_{VV}^{-1})_{12},
\quad\bar{\mathfrak{r}}(\tilde{T}_{1}\otimes \tilde{T}_{2})=(R_{VV}^{t})^{-1}_{12},\label{coquasi4}\\
\bar{\mathfrak{r}}(\tilde{T}_{1}\otimes T_{2})=[((R_{VV}^{-1})^{t_{1}})]^{-1}_{12},\quad
\bar{\mathfrak{r}}(T_{1}\otimes \tilde{T}_{2})=(R_{VV}^{t_{2}})_{12}.\label{coquasi5}
\end{gather}
And we also define
\begin{equation}\label{tip0}
\mathfrak{r}(1\otimes t^{i}_{j})=\mathfrak{r}(1\otimes \tilde{t}^{i}_{j})=\delta_{ij},\quad
\mathfrak{r}(t^{i}_{j}\otimes 1)=\mathfrak{r}(\tilde{t}^{i}_{j}\otimes 1)=\delta_{ij}.
\end{equation}
Since $\mathbb{C}\langle t^{i}_{j},\widetilde{t}^{i}_{j}\rangle$ is
the free algebra with generators $t^{i}_{j},\widetilde{t}^{i}_{j}$
and a coalgebra structure (see $H_{R}$ in Section 2), there is a
unique extension of $\mathfrak{r}$ to a linear form $\mathfrak{r}:
\mathbb{C}\langle
t^{i}_{j},\widetilde{t}^{i}_{j}\rangle\otimes\mathbb{C}\langle
t^{i}_{j},\widetilde{t}^{i}_{j}\rangle \longrightarrow \mathbb{C}$
such that the following property holds
\begin{equation}
\mathfrak{r}(ab\otimes c)=\mathfrak{r}(a\otimes
c_{(1)})\mathfrak{r}(b\otimes c_{(2)}),\quad \mathfrak{r}(a\otimes
bc)=\mathfrak{r}(a_{(1)}\otimes c)\mathfrak{r}(a_{(2)}\otimes
b)\label{co1}.
\end{equation}
Similarly,
$\bar{\mathfrak{r}}$ extends uniquely to a linear form $\bar{\mathfrak{r}}: \mathbb{C}\langle t^{i}_{j},\widetilde{t}^{i}_{j}\rangle\otimes\mathbb{C}\langle t^{i}_{j},\widetilde{t}^{i}_{j}\rangle \longrightarrow \mathbb{C}$
such that \eqref{co1} holds for the coalgebra $\mathbb{C}\langle t^{i}_{j},\widetilde{t}^{i}_{j}\rangle^{\textrm{op}}$.

The crucial step of the proof is to show that $\mathfrak{r}$ and
$\bar{\mathfrak{r}}$ pass to linear forms on $H_{R}\otimes H_{R}$.
Recall that $H_{R}$ is the quotient
of the algebra $\mathbb{C}\langle
t^{i}_{j},\widetilde{t}^{i}_{j}\rangle$ by relations in \eqref{tilde}.
By \eqref{co1} and the definition of $\mathfrak{r}$, we compute
\begin{gather*}
\mathfrak{r}(R_{12}T_{1}T_{2}\otimes T_{3})=R_{12}\mathfrak{r}(T_{1}T_{2}\otimes T_{3})
=R_{12}\mathfrak{r}(T_{1}\otimes T_{3})\mathfrak{r}(T_{2}\otimes T_{3})=R_{12}R_{13}R_{23},\\
\mathfrak{r}(T_{2}T_{1}R_{12}\otimes T_{3})=\mathfrak{r}(T_{2}T_{1}\otimes T_{3})R_{12}
=\mathfrak{r}(T_{2}\otimes T_{3})\mathfrak{r}(T_{1}\otimes T_{3})R_{12}
=R_{23}R_{13}R_{12},
\end{gather*}
so we obtain
\begin{equation}\label{coq1}
\mathfrak{r}((R_{12}T_{1}T_{2}-T_{2}T_{1}R_{12})\otimes T_{3})=0.
\end{equation}
Similarly,
according to \eqref{R0}, \eqref{R5}, \eqref{R000}, \eqref{R7}, \eqref{R4} respectively,
we obtain
\begin{equation}\label{coq2}
\left.
\begin{array}{c}
\mathfrak{r}((R_{12}T_{1}T_{2}-T_{2}T_{1}R_{12})\otimes \tilde{T}_{3})=0,\\\quad
\mathfrak{r}((R^{t}_{12}\tilde{T}_{1}\tilde{T}_{2}-\tilde{T}_{2}\tilde{T}_{1}R^{t}_{12})\otimes T_{3})=0,\quad
\mathfrak{r}((R^{t}_{12}\tilde{T}_{1}\tilde{T}_{2}-\tilde{T}_{2}\tilde{T}_{1}R^{t}_{12})\otimes \tilde{T}_{3})=0,\\
\mathfrak{r}(((R^{t_{2}})^{-1}_{12}T_{1}\tilde{T}_{2}-\tilde{T}_{2}T_{1}(R^{t_{2}})^{-1}_{12})\otimes
T_{3})=0,\quad
\mathfrak{r}(((R^{t_{2}})^{-1}_{12}T_{1}\tilde{T}_{2}-\tilde{T}_{2}T_{1}(R^{t_{2}})^{-1}_{12})\otimes
\tilde{T}_{3})=0.
\end{array}
\right.
\end{equation}
Again, applying \eqref{co1}, and equalities \eqref{R3}, \eqref{R4},
\eqref{R000}, \eqref{R0}, \eqref{R00}, we get that
\begin{equation}\label{coq3}
\left.
\begin{array}{c}
\mathfrak{r}(T_{1}\otimes(R_{23}T_{2}T_{3}-T_{3}T_{2}R_{23}))=0,\quad
\mathfrak{r}(\tilde{T}_{1}\otimes(R_{23}T_{2}T_{3}-T_{3}T_{2}R_{23}))=0,\\
\mathfrak{r}(T_{1}\otimes(R^{t}_{23}\tilde{T}_{2}\tilde{T}_{3}-\tilde{T}_{3}\tilde{T}_{2}R^{t}_{23}))=0,\quad
\mathfrak{r}(\tilde{T}_{1}\otimes(R^{t}_{23}\tilde{T}_{2}\tilde{T}_{3}-\tilde{T}_{3}\tilde{T}_{2}R^{t}_{23}))=0,\\
\mathfrak{r}(T_{1}\otimes((R^{t_{2}})^{-1}_{23}T_{2}\tilde{T}_{3}-\tilde{T}_{3}T_{2}(R^{t_{2}})^{-1}_{23}))=0,\quad
\mathfrak{r}(\tilde{T}_{1}\otimes((R^{t_{2}})^{-1}_{23}T_{2}\tilde{T}_{3}-\tilde{T}_{3}T_{2}(R^{t_{2}})^{-1}_{23}))=0.
\end{array}
\right.
\end{equation}
Furthermore, by \eqref{tip0}, we have $\mathfrak{r}(*\otimes
1)=\mathfrak{r}(1\otimes *)=0,$ where $*$ denotes the relations in
$\eqref{tilde}$. Therefore, $\mathfrak{r}$ induces a well-defined
linear form, again denoted by $\mathfrak{r}$, on $H_{R}\otimes
H_{R}$. Now we repeat the preceding reasoning with $\mathfrak{r}$
replaced by $\bar{\mathfrak{r}}$ and $\mathbb{C}\langle
t^{i}_{j},\widetilde{t}^{i}_{j}\rangle$ by $\mathbb{C}\langle
t^{i}_{j},\widetilde{t}^{i}_{j}\rangle^{\textrm{op}}$. Similarly, we
can obtain a well-defined linear form $\bar{\mathfrak{r}}$ on
$H_{R}\otimes H_{R}$.

Next, we show that two linear forms $\mathfrak{r}$ and $\bar{\mathfrak{r}}$ on $H_{R}\otimes H_{R}$ satisfy the conditions of the Definition \ref{decoquasi}.
By
$\Delta(T)=T\otimes T,
\Delta(\tilde{T})=\tilde{T}\otimes\tilde{T},
$
we obtain that
\begin{gather*}
\Delta(T_{1}\otimes T_{2})=(T_{1}\otimes T_{2})\otimes(T_{1}\otimes T_{2}),\quad
\Delta(\tilde{T}_{1}\otimes T_{2})=(\tilde{T}_{1}\otimes T_{2})\otimes(\tilde{T}_{1}\otimes T_{2}),\\
\Delta(T_{1}\otimes \tilde{T}_{2})=(T_{1}\otimes \tilde{T}_{2})\otimes(T_{1}\otimes \tilde{T}_{2}),\quad
\Delta(\tilde{T}_{1}\otimes \tilde{T}_{2})=(\tilde{T}_{1}\otimes \tilde{T}_{2})\otimes(\tilde{T}_{1}\otimes \tilde{T}_{2}).
\end{gather*}
Then by definition, we can obtain easily that
\begin{equation}\label{tip1}
\mathfrak{r}\ast \bar{\mathfrak{r}}=\bar{\mathfrak{r}}\ast\mathfrak{r}=\epsilon\otimes\epsilon.
\end{equation}
It is obvious that the relations in \eqref{co1} are equivalent to
\begin{equation}\label{tip2}
\mathfrak{r}\circ (\cdot\otimes \text{id})=\mathfrak{r}_{13}\ast\mathfrak{r}_{23},\quad
\mathfrak{r}\circ(\text{id}\otimes\cdot)=\mathfrak{r}_{13}\ast\mathfrak{r}_{12}.
\end{equation}
To prove the relation
$m^{\textrm{op}}=\mathfrak{r}*m*\bar{\mathfrak{r}}$ is equivalent to
prove that
\begin{equation}\label{tip10}
\mathfrak{r}(a_{(1)}\otimes b_{(1)})a_{(2)}b_{(2)}
=\mathfrak{r}(a_{(2)}\otimes b_{(2)})b_{(1)}a_{(1)}
\end{equation}
holds for any $a, b\in H_{R}$.
If equation \eqref{tip10} holds for $a^{\prime}, b^{\prime}, c^{\prime}$,
then by \eqref{co1},
we have
\begin{equation}\label{tip11}
\left.
\begin{array}{rl}
c^{\prime}_{(1)}(a^{\prime}b^{\prime})_{(1)}\mathfrak{r}(a^{\prime}b^{\prime})_{(2)}\otimes c^{\prime}_{(2)})
&=c^{\prime}_{(1)}a^{\prime}_{(1)}b^{\prime}_{(1)}\mathfrak{r}((a^{\prime}_{(2)}\otimes c^{\prime}_{(2)})\mathfrak{r}(b^{\prime}_{(2)}\otimes c^{\prime}_{(3)})\\
&=c^{\prime}_{(1)}a^{\prime}_{(1)}\mathfrak{r}((a^{\prime}_{(2)}\otimes c^{\prime}_{(2)})b^{\prime}_{(1)}\mathfrak{r}(b^{\prime}_{(2)}\otimes c^{\prime}_{(3)})\\
&=\mathfrak{r}(a^{\prime}_{(1)}\otimes c^{\prime}_{(1)})a^{\prime}_{(2)}
c^{\prime}_{(2)}b^{\prime}_{(1)}\mathfrak{r}(b^{\prime}_{(2)}\otimes c^{\prime}_{(3)})\\
&=\mathfrak{r}(a^{\prime}_{(1)}\otimes c^{\prime}_{(1)})a^{\prime}_{(2)}
\mathfrak{r}(b^{\prime}_{(1)}\otimes c^{\prime}_{(2)})b^{\prime}_{(2)}c^{\prime}_{(3)}\\
&=\mathfrak{r}(a^{\prime}_{(1)}\otimes c^{\prime}_{(1)})\mathfrak{r}(b^{\prime}_{(1)}\otimes c^{\prime}_{(2)})
a^{\prime}_{(2)}b^{\prime})_{(2)}c^{\prime}_{(3)}\\
&=\mathfrak{r}(a^{\prime}_{(1)}b^{\prime}_{(1)}\otimes c^{\prime}_{(1)})a^{\prime}_{(2)}b^{\prime})_{(2)}c^{\prime}_{(2)}\\
&=\mathfrak{r}((a^{\prime}b^{\prime})_{(1)}\otimes c^{\prime}_{(1)})(a^{\prime}b^{\prime})_{(2)}c^{\prime}_{(2)}.
\end{array}
\right.
\end{equation}
\begin{equation}\label{tip12}
\left.
\begin{array}{rl}
(b^{\prime}c^{\prime})_{(1)}a^{\prime}_{(1)}\mathfrak{r}(a^{\prime}_{(2)}\otimes(b^{\prime}c^{\prime})_{(2)} )
&=b^{\prime}_{(1)}c^{\prime}_{(1)}a^{\prime}_{(1)}\mathfrak{r}(a^{\prime}_{(2)}\otimes c^{\prime}_{(2)})\mathfrak{r}(a^{\prime}_{(3)}\otimes b^{\prime}_{(2)})\\
&=b^{\prime}_{(1)}\mathfrak{r}((a^{\prime}_{(1)}\otimes c^{\prime}_{(1)})a^{\prime}_{(2)}c^{\prime}_{(2)}\mathfrak{r}(a^{\prime})_{(3)}\otimes b^{\prime}_{(2)})\\
&=\mathfrak{r}(a^{\prime}_{(1)}\otimes c^{\prime}_{(1)})b^{\prime}_{(1)}a^{\prime}_{(2)}\mathfrak{r}(a^{\prime}_{(3)}\otimes b^{\prime}_{(2)})c^{\prime}_{(2)}\\
&=\mathfrak{r}(a^{\prime}_{(1)}\otimes c^{\prime}_{(1)})
\mathfrak{r}(a^{\prime}_{(2)}\otimes b^{\prime}_{(1)})a^{\prime}_{(3)}b^{\prime}_{(2)}c^{\prime}_{(2)}\\
&=\mathfrak{r}(a^{\prime}_{(1)}b^{\prime}_{(1)}\otimes c^{\prime}_{(1)})a^{\prime}_{(2)}b^{\prime}_{(2)}c^{\prime}_{(2)}\\
&=\mathfrak{r}(a^{\prime}_{(1)}\otimes (b^{\prime}c^{\prime})_{(1)})a^{\prime}_{(2)}(b^{\prime}c^{\prime})_{(2)}.
\end{array}
\right.
\end{equation}
Namely,
according to \eqref{tip11} and \eqref{tip12},
we verify that equation \eqref{tip10} holds for $a=a', b=b'c'$ and for $a=a'b', b=c'$ provided that they hold for $a=a', b=b'$,
for $a=a', b=c'$ and for $a=b', b=c'$, respectively.
Thus it suffices to show that equation \eqref{tip10} holds for $1$ and the generators.
By \eqref{tip0},
we obtain
$$\mathfrak{r}(1\otimes t^{i}_{a})t^{a}_{j}=\epsilon(t^{i}_{a})t^{a}_{j}=t^{i}_{j}
=t^{i}_{a}\epsilon(t^{a}_{j}) =\mathfrak{r}(1\otimes
t^{a}_{j})t^{i}_{a},\ \, \mathfrak{r}(1\otimes
\tilde{t}^{i}_{a})\tilde{t}^{a}_{j}=\epsilon(\tilde{t}^{i}_{a})\tilde{t}^{a}_{j}=\tilde{t}^{i}_{j}
=\tilde{t}^{i}_{a}\epsilon(\tilde{t}^{a}_{j}) =\mathfrak{r}(1\otimes
\tilde{t}^{a}_{j})\tilde{t}^{i}_{a}.
$$
Then associated with
$
\mathfrak{r}(1\otimes ab)=\mathfrak{r}(1\otimes b)\mathfrak{r}(1\otimes a)
$,
we have
\begin{equation}\label{tip3}
\mathfrak{r}(1\otimes a_{(1)})a_{(2)}=\mathfrak{r}(1\otimes a_{(2)})a_{(1)}, \quad\text{ for }\ a\in H_{R}.
\end{equation}
Similarly,
we can prove
\begin{equation}\label{tip4}
\mathfrak{r}(a_{(1)}\otimes 1)a_{(2)}=\mathfrak{r}(a_{(2)}\otimes
1)a_{(1)}, \quad\text{ for }\ a\in H_{R}.
\end{equation}\label{comm}
Namely, if $a$ or $b$ is $1$,
equation \eqref{tip10} is proved.
Moreover,
by the definition of $\mathfrak{r}$,
we can obtain that
$$m^{\textrm{op}}\ast\mathfrak{r}(T_{1}\otimes T_{2})
=T_{2}T_{1}\mathfrak{r}(T_{1}\otimes T_{2})=T_{2}T_{1}R_{12},\quad
\mathfrak{r}\ast m(T_{1}\otimes T_{2})=\mathfrak{r}(T_{1}\otimes
T_{2})T_{1}T_{2}=R_{12}T_{1}T_{2},$$ namely, we have the following
equivalent relation
\begin{equation}\label{tip6}
m^{\textrm{op}}\ast\mathfrak{r}(T_{1}\otimes T_{2})
=\mathfrak{r}\ast m(T_{1}\otimes T_{2})
\Longleftrightarrow
R_{12}T_{1}T_{2}=T_{2}T_{1}R_{12}.
\end{equation}
Hence, we can obtain other equivalent relations in a similar way.
\begin{gather}
 m^{\textrm{op}}\ast\mathfrak{r}(T_{1}\otimes \tilde{T}_{2})
=\mathfrak{r}\ast m(T_{1}\otimes \tilde{T}_{2})
\Longleftrightarrow
\tilde{T}_{2}T_{1}(R^{t_{2}})^{-1}=(R^{t_{2}})^{-1}T_{1}\tilde{T}_{2},\label{tip7} \\
m^{\textrm{op}}\ast\mathfrak{r}(\tilde{T}_{1}\otimes T_{2})
=\mathfrak{r}\ast m(\tilde{T}_{1}\otimes T_{2})
\Longleftrightarrow
T_{2}\tilde{T}_{1}(R^{-1})^{t_{1}}=(R^{-1})^{t_{1}}\tilde{T}_{1}T_{2},\label{tip8}\\
m^{\textrm{op}}\ast\mathfrak{r}(\tilde{T}_{1}\otimes \tilde{T}_{2})
=\mathfrak{r}\ast m(\tilde{T}_{1}\otimes \tilde{T}_{2})
\Longleftrightarrow
\tilde{T}_{2}\tilde{T}_{1}R^{t}=R^{t}\tilde{T}_{1}\tilde{T}_{2}.\label{tip9}
\end{gather}
Here,
the relation $(R^{-1})^{t_{1}}\tilde{T}_{1}T_{2}=T_{2}\tilde{T}_{1}(R^{-1})^{t_{1}}$ can be obtained by $T\tilde{T}^{t}=\tilde{T}^{t}T=I$ and Proposition \ref{comm},
namely,
\begin{equation}\label{tip5}
\left.
\begin{array}{l}
R^{t}\tilde{T}_{1}\tilde{T}_{2}=\tilde{T}_{2}\tilde{T}_{1}R^{t}
\Longleftrightarrow
R\tilde{T}^{t}_{2}\tilde{T}^{t}_{1}=\tilde{T}^{t}_{1}\tilde{T}^{t}_{2}R
\Longleftrightarrow
\tilde{T}^{t}_{2}\tilde{T}^{t}_{1}R^{-1}=R^{-1}\tilde{T}^{t}_{1}\tilde{T}^{t}_{2}\\
\Longleftrightarrow
\tilde{T}^{t}_{1}R^{-1}T_{2}=T_{2}R^{-1}\tilde{T}^{t}_{1}
\Longleftrightarrow
(R^{-1})^{t_{1}}\tilde{T}_{1}T_{2}=T_{2}\tilde{T}_{1}(R^{-1})^{t_{1}}.
\end{array}
\right.
\end{equation}
Then equation \eqref{tip10} holds for generators $a$ or $b$ in virtue of \eqref{tip6}--\eqref{tip9}.
Up to now,
with the conditions in the Theorem,
we have proved that
$\mathfrak{r}$ is actually a universal $r$-form of $H_{R}$,
and $H_{R}$ is coquasitriangular.
The proof is complete.
\end{proof}

We find that many conclusions result from some requirements for
$R$-matrix, for instance, $R^{t_{i}}, (R^{-1})^{t_{i}}, i=1,2$ are
invertible, and
$(R^{-1})^{t_{1}}P(R^{t_{2}})^{-1}PK_{0}=\textrm{const}\cdot K_{0}$.
However, it is uneasy to confirm these conditions. To our surprise,
for the $R$-matrix $R_{VV}$ associated to any finite-dimensional
irreducible $U_{q}(\mathfrak{g})$-module, 
we can confirm this fact according to some features of the representation.
\begin{proposition}\label{rvv}
Assume that $V$ is an irreducible finite-dimensional $U_q(\mathfrak
g)$-module,
then the matrices $R_{VV}^{t_{i}},
(R_{VV}^{-1})^{t_{i}}, i=1,2$ are invertible, and they satisfy the
FRT-condition
$$(R_{VV}^{-1})^{t_{1}}P(R_{VV}^{t_{2}})^{-1}PK_{0}=\textrm{const}\cdot
K_{0}.$$
\end{proposition}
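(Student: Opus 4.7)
The plan is to reformulate both assertions categorically, using the ribbon structure on the category of finite-dimensional $U_q(\mathfrak{g})$-modules. Invertibility of the partial transposes will come from rigidity (the existence of dual modules $V^*$ with evaluation and coevaluation maps), and the FRT-condition will come from Schur's lemma applied to the irreducible $V$. This matches the authors' expectation that the conclusion should depend on \emph{features of the representation} rather than on the explicit entries of $R_{VV}$.

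For the invertibility of $R_{VV}^{t_i}$ and $(R_{VV}^{-1})^{t_i}$, $i=1,2$, I would identify these partial transposes, up to composition with the rigidity morphisms $\mathrm{ev}_V$ and $\mathrm{coev}_V$, with braiding isomorphisms in the ribbon category. Concretely, $R_{VV}^{t_2}$ corresponds (after suitable conjugation by evaluation and coevaluation) to the mixed braiding $c_{V,V^*}:V\otimes V^*\to V^*\otimes V$, twisted by the pivot $K_{2\rho}$ that implements $S^2$ on $U_q(\mathfrak{g})$. Since every braiding in a ribbon category is invertible and $V$ is finite-dimensional (hence rigid), this yields invertibility of $R_{VV}^{t_2}$; the case $R_{VV}^{t_1}$ uses $c_{V^*,V}$, and the two statements for $(R_{VV}^{-1})^{t_i}$ follow by applying the same argument to the inverse braiding.

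For the FRT-condition $(R_{VV}^{-1})^{t_1}P(R_{VV}^{t_2})^{-1}PK_0=\mathrm{const}\cdot K_0$, the crucial observation is that the entries $(K_0)^{ij}_{kl}=\delta_{ij}\delta_{kl}$ describe a rank-one operator that factors through a one-dimensional space; under the identification of bases, $K_0$ is (a rescaling of) $\mathrm{coev}_V\circ\mathrm{ev}_V$. Consequently, the left-hand side can be read as a $U_q(\mathfrak{g})$-equivariant endomorphism of $V$ (or equivalently of the image of $K_0$) obtained by threading $V$ through the trivial module via the rigidity and braiding morphisms. Since $V$ is irreducible, Schur's lemma forces this intertwiner to act as a scalar on $V$, and hence the composition on $V\otimes V$ acts as the same scalar on $K_0$, proving the required proportionality.

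To identify the scalar explicitly, I would pass through the formula \eqref{imp2} for $\mathfrak{R}$: on the image of $\mathrm{coev}_V$ the nilpotent exponentials collapse to the identity, while the Cartan part $\exp(h\sum B_{ij}H_i\otimes H_j)$ contributes $q^{(2\rho,\mu_V)}$, where $\mu_V$ is the highest weight of $V$. The main obstacle I anticipate is the bookkeeping: matching the conventions for $t_1$ versus $t_2$ partial transposes with the left versus right duals of $V$, and ensuring that the pivotal element acts on the irreducible $V$ by the scalar predicted from $2\rho$ rather than a permuted or normalized variant. A secondary technical point is verifying that $(R^{-1})^{t_i}$ coincides with $(R^{t_i})^{-1}$ in the regime where both sides are defined, so that the compositions appearing in the FRT-condition are unambiguous.
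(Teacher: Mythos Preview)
Your approach is essentially correct but genuinely different from the paper's. The paper does not use the ribbon structure or Schur's lemma at all. Instead it invokes Kashiwara's global crystal basis to produce a basis $\{v_i\}$ of $V$ in which $R_{VV}$ is upper triangular; invertibility of $R_{VV}^{t_2}$ is then obtained by an elementary index-chasing argument showing that $R_{VV}^{t_2}$ is again upper triangular with the same diagonal, hence the same determinant. For the FRT-condition the paper uses the crystal-basis feature ``$E_\alpha(v_i)=fv_k\Longleftrightarrow F_\alpha(v_k)=gv_i$'' to deduce $R_{VV}{}^{ja}_{ai}=\textrm{const}\cdot\delta_{ij}$ and $(R_{VV}^{t_2})^{-1}{}^{ji}_{aa}=\textrm{const}\cdot\delta_{ij}$ directly, and then expands the product entrywise. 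Your categorical argument is cleaner and makes the role of irreducibility transparent (it is invisible in the paper's explicit computation), while the paper's route yields concrete triangular forms for the matrices, which are reused later in the $F_4$ and $G_2$ constructions.

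Two cautions. First, your Schur step needs more care than you indicate: $K_0$ as an operator on $V\otimes V$ is \emph{not} $U_q(\mathfrak{g})$-equivariant, and neither are the partial transposes $(R^{-1})^{t_1}$, $(R^{t_2})^{-1}$ individually. What you must show is that after reinterpreting the first tensor factor as $V^*$ via the chosen basis, the \emph{entire} composite becomes the equivariant map $V^*\otimes V\to\mathbb{C}\to V^*\otimes V$ sandwiched between genuine braidings with $V^*$; only then does Schur apply to the induced endomorphism of $V$. This is exactly the ``bookkeeping'' you flag, but it is the heart of the argument, not a side issue. Second, your final worry is a misconception: $(R^{-1})^{t_i}$ and $(R^{t_i})^{-1}$ are \emph{different} matrices in general (partial transpose does not commute with inversion), and the FRT-condition deliberately uses one of each; there is no ambiguity to resolve, and you should not expect them to coincide.
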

\begin{proof}
As well known,
for any irreducible finite-dimensional module with the highest weight $\lambda$,
there exist uniquely crystal bases $(\mathcal{L}(\lambda), \mathcal{B}(\lambda))~(\mathcal{B}(\infty),\mathcal{L}(\infty))$,
which are bases of $U_{q}(\mathfrak g)$-modules at $``q=0"~(\infty)$,
respectively.
Crystal bases have a nice property,
that is,
for any $b, b'\in\mathcal{B}(\lambda)$ and $\alpha$ is a simple root:
$b=\tilde{E}_{\alpha}b' \Longleftrightarrow
b'=\tilde{F}_{\alpha}b$,
where
$\tilde{E}_{\alpha}, \tilde{F}_{\alpha}$
are famous Kashiwara operators.
Moreover,
Kashiwara globalize this notion.
Namely,
with the aid of a crystal base,
Kashiwara constructed a unique base named the global crystal base of any finite-dimensional highest weight irreducible $U_q(\mathfrak g)$-module that has similar properties to crystal bases.
The (global) crystal bases theory for quantum groups was introduced by Kashiwara in
\cite{Ka1},
\cite{Ka2},
\cite{Ka3}.

Based on the global crystal base theory of quantum groups,
for the finite-dimensional irreducible $U_q(\mathfrak g)$-module $V$,
$V$ has a basis $\{ v_{i}\}$ of $V$ with index-raising weights such that the actions of
$T_{V}(E_{i})$'s and $T_{V}(F_{i})$'s raise and descend the indices
of the basis, respectively.
Moreover,
they satisfy
$E_{\alpha}(v_{i})=fv_{k}\Longleftrightarrow F_{\alpha}(v_{k})=gv_{i}$
for some $f, g\in\mathbb{C}[q,q^{-1}]^*$.
In order to analyze the indices of $R$-matrix conveniently,
we substitute the expression \eqref{rmatrix} for $B_{VV}\circ(T_{V}\otimes
T_{V})(\mathfrak{R})(v_{i}\otimes
v_{j})=R_{VV}{}^{ij}_{nm}(v_{n}\otimes v_{m})$,
and also obtain the upper
triangular $R$-matrix $R_{VV}$ by  with respect to the
lexicographic order on the basis of $V^{\otimes 2}$ induced by the
chosen basis $v_i$ of $V$.
Since
$(R_{VV}^{t_{2}})^{ij}_{kl}=R_{VV}{}^{il}_{kj}$,
we have $i\leq k$ and
$j\leq l$ by the defining formula of $R_{VV}$ if
$(R_{VV}^{t_{2}})^{ij}_{kl}\neq 0$. More precisely, by its upper triangularity, we have $(ij)<(kl)$ if
$i<k$; and if $i=k$, we have $l=j$, i.e., $(ij)=(kl)$.
So, we prove that $(ij)\leq(kl)$ if $(R_{VV}^{t_{2}})^{ij}_{kl}\neq 0$. This means $R_{VV}^{t_{2}}$ is also
upper triangular. Furthermore, we obtain
$\textrm{det}(R_{VV}^{t_{2}})=\textrm{det}(R_{VV})$ by
$(R_{VV}^{t_{2}}){}^{ij}_{ij}=R_{VV}{}^{ij}_{ij}$, so
$R_{VV}^{t_{2}}$ is invertible. We can also prove that
$(R^{-1}_{VV})^{t_{2}}$ is invertible owing to
$R_{VV}^{-1}=B_{VV}\circ(T_{V}\otimes T_{V})(S\otimes
\textrm{id})(\mathfrak{R})$. In the same way, we can prove that matrices
$R_{VV}^{t_{1}}$ and $(R_{VV}^{-1})^{t_{1}}$ are invertible.

For the finite-dimensional irreducible $U_{q}(\mathfrak{g})$-module $V$ involved,
according to the equivalent relation
$E_{\alpha}(v_{i})=fv_{k}\Longleftrightarrow F_{\alpha}(v_{k})=gv_{i}$
for some $f, g\in\mathbb{C}[q,q^{-1}]^*$,
we obtain that
$E_{i_{1}}^{r_{1}}E_{i_{2}}^{r_{2}}\cdots E_{i_{s}}^{r_{s}}(v_j)=xv_i
\Longleftrightarrow
F_{i_{s}}^{r_{s}}\cdots F_{i_{2}}^{r_{2}}F_{i_{1}}^{r_{1}}(v_i)=yv_j$
for some $x, y\in\mathbb{C}[q,q^{-1}]^*$.
Thus
$
R_{VV}{}^{ja}_{ai}=\textrm{const}\cdot
\delta_{ij}$
by \eqref{rmatrix},
and
the equality $(R_{VV}^{-1})^{ja}_{ai}=\textrm{const}\cdot
\delta_{ij}$ can be obtained in the similar way,
then $R_{VV}^{t_{2}}{}^{ji}_{aa}=R_{VV}{}^{ja}_{ai}=\textrm{const}\cdot
\delta_{ij}$.
With these,
by the equality $(R^{t_{2}})^{-1}R^{t_{2}}=R^{t_{2}}(R^{t_{2}})^{-1}=I$,
we have
$(R_{VV}^{t_{2}})^{-1}{}^{ji}_{aa}=\textrm{const}\cdot
\delta_{ij}.$
With these analysis, we obtain

$
\left.
\begin{array}{rl}
[(R_{VV}^{-1})^{t_{1}}P(R_{VV}^{t_{2}})^{-1}PK_{0}]^{ij}_{kl}
&=\sum\limits_{m,n,p,q}((R_{VV}^{-1})^{t_{1}})^{ij}_{mn}(P(R_{VV}^{t_{2}})^{-1}P){}^{mn}_{pq}K_{0}{}^{pq}_{kl}\\
&=\sum\limits_{m,n,p,q}((R_{VV}^{-1})^{t_{1}})^{ij}_{mn}(R_{VV}^{t_{2}})^{-1}{}^{nm}_{pq}\delta_{qp}\delta_{kl}\\
&=\sum\limits_{m,n,p}((R_{VV}^{-1})^{t_{1}})^{ij}_{mn}(R_{VV}^{t_{2}})^{-1}{}^{nm}_{pp}\delta_{kl}\\
&=\sum\limits_{m,n,p}((R_{VV}^{-1})^{t_{1}})^{ij}_{mn}(R_{VV}^{t_{2}})^{-1}{}^{nm}_{pp}\delta_{mn}\delta_{kl}\\
&=\sum\limits_{m,p}((R_{VV}^{-1})^{t_{1}})^{ij}_{mm}(R_{VV}^{t_{2}})^{-1}{}^{mm}_{pp}\delta_{kl}\\
&=\sum\limits_{m,p}(R_{VV}^{-1})^{mj}_{im}(R_{VV}^{t_{2}})^{-1}{}^{mm}_{pp}\delta_{kl}\\
&=\sum\limits_{m,p}(R_{VV}^{-1})^{mj}_{jm}(R_{VV}^{t_{2}})^{-1}{}^{mm}_{pp}\delta_{ij}\delta_{kl}
=\textrm{const}\cdot K_{0}.
\end{array}
\right.
$

The proof is complete.
\end{proof}

By Theorem 3.2 and Proposition 3.3, we obtain an important
conclusion as desired:
\begin{corollary} Assume that $V$ is an irreducible
finite-dimensional $U_q(\mathfrak g)$-module, $R_{VV}$ is the
corresponding $R$-matrix, then  $H_{R_{VV}}$ is a coquasitriangular
Hopf algebra, so that the categories of left (right)
$H_{R_{VV}}$-comodules ${}^{H_{R_{VV}}}\mathfrak M$ \ (\,$\mathfrak
M^{H_{R_{VV}}}$\,) are braided.
\end{corollary}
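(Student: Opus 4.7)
The plan is essentially to verify that the hypotheses of Theorem \ref{cotheo} are satisfied in this setting, and then invoke the standard fact that the (co)module categories over a coquasitriangular Hopf algebra carry a braiding.

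First, I would observe that Proposition \ref{rvv} has already done the main technical work: for any irreducible finite-dimensional $U_q(\mathfrak g)$-module $V$, it establishes simultaneously (i) the invertibility of all four matrices $R_{VV}^{t_i}$ and $(R_{VV}^{-1})^{t_i}$ for $i=1,2$, and (ii) the FRT-condition
\[
(R_{VV}^{-1})^{t_{1}}P(R_{VV}^{t_{2}})^{-1}PK_{0}=\textrm{const}\cdot K_{0}.
\]
These are precisely the hypotheses appearing in Theorem \ref{cotheo}. Thus the deduction is immediate: Theorem \ref{cotheo} applies with $R=R_{VV}$, and its conclusion is that $H_{R_{VV}}$ is a coquasitriangular Hopf algebra, with the universal $r$-form $\mathfrak r$ given explicitly by formulas \eqref{coquasi2}--\eqref{coquasi3} in terms of $R_{VV}$, $R_{VV}^t$, $(R_{VV}^{-1})^{t_1}$ and $(R_{VV}^{t_2})^{-1}$.

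Second, for the braided-category statement, I would appeal to the general fact recalled in the Preliminaries (Section 2, after Definition \ref{decoquasi}): whenever $(A,\mathfrak r)$ is a coquasitriangular Hopf algebra, the categories ${}^{A}\mathfrak M$ and $\mathfrak M^{A}$ of left and right $A$-comodules are braided monoidal categories, with braiding built from $\mathfrak r$ in the standard way (on left comodules $V,W$, the braiding sends $v\otimes w$ to $\mathfrak r(v_{(-1)}\otimes w_{(-1)})\,w_{(0)}\otimes v_{(0)}$, and analogously on the right). Specializing to $A=H_{R_{VV}}$ with the coquasitriangular structure produced above yields the desired braidings on ${}^{H_{R_{VV}}}\mathfrak M$ and $\mathfrak M^{H_{R_{VV}}}$.

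There is no real obstacle at this stage: the corollary is a clean packaging of Theorem \ref{cotheo} plus Proposition \ref{rvv}, together with a quotation of the standard coquasitriangular-to-braided-category principle. The only thing worth spelling out in the written proof is the explicit verification that the hypotheses of Theorem \ref{cotheo} on $R$ coincide with the assertions proved in Proposition \ref{rvv} for $R=R_{VV}$, so that the implication \emph{invertibility $+$ FRT-condition $\Rightarrow$ coquasitriangular $H_R$} can be applied without any gap.
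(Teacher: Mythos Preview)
Your proposal is correct and matches the paper's approach exactly: the corollary is stated immediately after Proposition~\ref{rvv} with the one-line justification ``By Theorem 3.2 and Proposition 3.3,'' which is precisely the combination you spell out. The braided-category consequence is indeed taken as a standard fact, mentioned in the preliminaries (early in Section~2, where ${}^{A}\mathfrak M$ and $\mathfrak M^{A}$ are introduced for coquasitriangular $A$), so nothing further is needed.
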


\subsection{The weakly quasitriangular dual pairs for general $R$-matrices}
Starting from any irreducible representation $T_{V}$ of
$U_{q}(\mathfrak{g})$ or $U_{h}(\mathfrak{g})$, we have the extended
Hopf algebra $U_{q}^{\textrm{ext}}(\mathfrak{g})$ generated by
$m^{\pm}$ and the coquasitriangular Hopf algebra $H_{R_{VV}}$
corresponding to the $R$-matrix $R_{VV}$. In general, $R_{VV}$ is no
longer regular in the sense of Majid \cite{majid1} so that the
Majid's double-bosonization construction Theorem is invalid for the
irregular $R$-matrix. In order to realize the inductive
constructions of the exceptional quantum groups by
double-bosonization procedure, we have to try to generalize the
Majid's double-bosonization construction Theorem to the irregular
cases. To this end, we consider the dual pair of bialgebras
$(\widetilde{U(R)},H_R)$ for the irregular $R$-matrices, instead of
the original weakly quasitriangular dual pair of bialgebras
$(\widetilde{U(R)}, A(R))$ for the regular $R$-matrices served as
the starting point of the Majid's framework. Moreover, we can
establish a weakly quasitriangular dual pair of Hopf algebras
$(U_{q}^{\textrm{ext}}(\mathfrak{g}), H_{R_{VV}})$ as we desired,
where $U_{q}^{\textrm{ext}}(\mathfrak{g})=\widehat{U(R)}$ in
subsection 3.3.
\begin{theorem}\label{theoWeak}
Let $T_{V}$ be an irreducible representation of $U_{q}(\mathfrak{g})$,
$R_{VV}$ the corresponding $R$-matrix,
then there exists a weakly quasitriangular dual pair between $U_{q}^{\textrm{ext}}(\mathfrak{g})$ and $H_{R_{VV}}$,
given by
\begin{gather}
\langle (m^{+})^{i}_{j},t^{k}_{l}\rangle=R_{VV}{}^{ik}_{jl},\quad
\langle (m^{-})^{i}_{j},t^{k}_{l}\rangle=(R^{-1}_{VV}){}^{ki}_{lj},\label{tlidew1}\\
\langle (m^{+})^{i}_{j},\tilde{t}^{k}_{l}\rangle=(R_{VV}^{t_{2}})^{-1}{}^{ik}_{jl},\quad
\langle (m^{-})^{i}_{j},\tilde{t}^{k}_{l}\rangle=[(R^{-1}_{VV})^{t_{1}}]^{-1}{}^{ki}_{lj}.\label{tlidew2}
\end{gather}
The convolution-invertible algebra\,/\,anti-coalgebra maps $\R,\bar{\R}$ in
$\textrm{Hom}(H_{\lambda R},U_{q}^{\textrm{ext}}(\mathfrak{g}))$ are
$$
\R(t^{i}_{j})=(m^{+})^{i}_{j},\quad
\R(\tilde{t}^{i}_{j})=(m^{+})^{-1}{}^{j}_{i};\quad
\bar{\R}(t^{i}_{j})=(m^{-})^{i}_{j},\quad
\bar{\R}(\tilde{t}^{i}_{j})=(m^{-})^{-1}{}^{j}_{i}.
$$
Moreover,
the following relations
\begin{equation}\label{three1}
\langle\bar{\R}(a),b\rangle=\langle\R^{-1}(b),a\rangle,\quad
\partial^{R}h=\R \ast(\partial^{L}h)\ast\R^{-1},\quad
\partial^{R}h=\bar{\R}\ast(\partial^{L}h)\ast\bar{\R}^{-1}
\end{equation}
hold for any $a\in H_{R_{VV}},\,h\in
U_{q}^{\textrm{ext}}(\mathfrak{g})$.
\end{theorem}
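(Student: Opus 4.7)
The plan is to leverage the weakly quasitriangular dual pairing of bialgebras $(\widetilde{U(R_{VV})},A(R_{VV}))$ already used as the starting point by Majid, and to extend it along the inclusion $A(R_{VV})\subset H_{R_{VV}}$ on one side and the quotient $\widetilde{U(R_{VV})}\twoheadrightarrow \widehat{U(R_{VV})}=U_q^{\textrm{ext}}(\mathfrak g)$ on the other. Concretely, I would first define the pairing on generators by \eqref{tlidew1}--\eqref{tlidew2} and extend it to the free tensor algebras using the bialgebra pairing rules $\langle ab,c\rangle=\langle a,c_{(1)}\rangle\langle b,c_{(2)}\rangle$ and $\langle a,bc\rangle=\langle a_{(1)},b\rangle\langle a_{(2)},c\rangle$. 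The substantive task is to show that this pairing descends to the quotients on both sides.

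Well-definedness then splits into two families of checks: each defining relation in \eqref{tilde} for $H_{R_{VV}}$ must annihilate under pairing with every $m^{\pm}$, and each defining relation \eqref{new1} for $\widehat{U(R_{VV})}$ must annihilate under pairing with every $t^k_l$ or $\tilde t^k_l$. Both families of computations reduce to matrix identities among $R_{VV}$ and its various transposed inverses, and for each of the twelve required identities I would pull the matching relation from Proposition \ref{lematrix} (specifically \eqref{R0}--\eqref{R00}), using the invertibility results of Proposition \ref{rvv} to justify the relevant inverses. Descent through the quotient by $(m^{\pm})^i_i(m^{\mp})^i_i-1$ follows because the upper triangularity of $R_{VV}$ established in Proposition \ref{rvv} forces the diagonal entries of $R_{VV}$, $R_{VV}^{-1}$, $(R_{VV}^{t_2})^{-1}$, and $((R_{VV}^{-1})^{t_1})^{-1}$ to be mutually reciprocal in the required pairs.

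Next I would check that $\mathcal{R},\bar{\mathcal{R}}:H_{R_{VV}}\to U_q^{\textrm{ext}}(\mathfrak g)$ are algebra / anti-coalgebra maps. On the $t^k_l$ generators this is transparent since $m^{\pm}$ satisfy formally identical FRT relations to $T$ (compare \eqref{new1} with \eqref{tilde}); on the $\tilde t^k_l$ generators, the prescription $\tilde t\mapsto ((m^{\pm})^t)^{-1}$ sends the defining relations of $H_{R_{VV}}$ precisely to the commutation relations \eqref{neww1}--\eqref{neww2} for $\widehat{m^{\pm}}$ that were already established in the proof of Theorem \ref{Hopf}. Convolution-invertibility then follows by composing with the antipodes on either side. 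For the three compatibility relations \eqref{three1}, the first is verified directly on generators by comparing $\mathcal{R}^{-1}(t)=S(m^+)$ with $\bar{\mathcal{R}}(t)=m^-$ (and similarly for $\tilde t$). The two differentiation-operator identities are reformulations of the cross commutation $Rm_1^+m_2^-=m_2^-m_1^+R$ in \eqref{new1}: evaluating both sides of $\partial^R h=\mathcal{R}\ast(\partial^L h)\ast \mathcal{R}^{-1}$ on the generators of $H_{R_{VV}}$ produces matrix expressions that collapse to this single FRT relation.

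The main obstacle is expected to be the combinatorial bookkeeping in the well-definedness step: the passage from $A(R)$ to $H_{R_{VV}}$ roughly quadruples the number of defining relations to be tested against the pairing, and each reduces to a distinct one of the identities \eqref{R0}--\eqref{R00}. Matching each required identity to the correct relation in Proposition \ref{lematrix}, and verifying that the invertibility hypotheses of Proposition \ref{rvv} are exactly what licenses the needed transposes and inverses, is the delicate step; the remaining verifications are essentially structural transcriptions of Majid's matched-pair arguments from \cite{majid1,majid5} into the present, irregular $R$ setting.
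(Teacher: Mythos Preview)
Your proposal is correct and follows essentially the same four-step structure as the paper's proof: well-definedness of the pairing via the matrix identities of Proposition~\ref{lematrix}, the algebra/anti-coalgebra property of $\mathcal R,\bar{\mathcal R}$ via the equivalences derived from the FRT relations (as in the proof of Theorem~\ref{Hopf}), explicit convolution inverses on generators, and verification of \eqref{three1} on generators using Lemma~\ref{pat}. The only cosmetic difference is that the paper constructs $\mathcal R^{-1},\bar{\mathcal R}^{-1}$ explicitly on the matrix generators rather than invoking antipode composition, and you are slightly more careful than the paper in noting that descent through the ideal generated by $(m^+)^i_i(m^-)^i_i-1$ requires the reciprocity of diagonal entries.
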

In order to prove the above theorem,
we give the following Lemma \ref{pat},
which can be proved in virtue of the definition of weakly quasitriangular dual pair.
\begin{lemma}\label{pat}
$(H,A)$ is a weakly quasitriangular dual pair of Hopf algebras,
$\R,\bar{\R}$ are the corresponding convolution-invertible algebra\,/\,anti-coalgebra maps,
and $\partial^{L}h,\,
\partial^{R}h$
are left and right ``differentiation operators" for any fixed $h\in
H$. Then for any $a, b, c\in A,\,g, h\in H$, we have
\begin{gather}
\partial^{L}(gh)=\partial^{L}g\ast\partial^{L}h,\quad
\partial^{R}(gh)=\partial^{R}g\ast\partial^{R}h,\label{patri}\\
\langle\bar{\R}(ab),c\rangle=\langle\R^{-1}(c),ab\rangle,\quad
\langle\bar{\R}(a),bc\rangle=\langle\R^{-1}(bc),a\rangle,\label{patri1}\\
\partial^{R}(gh)=\R \ast\partial^{L}(gh)\ast\R^{-1},\quad
\partial^{R}(gh)=\bar{\R}\ast\partial^{L}(gh)\ast\bar{\R}^{-1}.\label{patri2}
\end{gather}
\end{lemma}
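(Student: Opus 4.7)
The plan is to establish each family of identities by short direct computations with Sweedler indices, using only the dual pairing axioms between $H$ and $A$, the bialgebra coproducts, the algebra/anti-coalgebra hypotheses on $\R,\bar{\R}$, and the single-element weak quasitriangularity relations. First I would dispose of \eqref{patri}: unfold $(\partial^{L}(gh))(a)=\langle(gh)_{(1)},a\rangle(gh)_{(2)}$, combine $\Delta(gh)=g_{(1)}h_{(1)}\otimes g_{(2)}h_{(2)}$ with the pairing axiom $\langle fk,a\rangle=\langle f,a_{(1)}\rangle\langle k,a_{(2)}\rangle$, and reassemble the result as $\langle g_{(1)},a_{(1)}\rangle g_{(2)}\cdot\langle h_{(1)},a_{(2)}\rangle h_{(2)}=(\partial^{L}g\ast\partial^{L}h)(a)$. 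The $\partial^{R}$ identity is entirely symmetric.

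Next, for \eqref{patri1}, I would use that $\bar{\R}$ is an algebra map to factor $\langle\bar{\R}(ab),c\rangle=\langle\bar{\R}(a)\bar{\R}(b),c\rangle=\langle\bar{\R}(a),c_{(1)}\rangle\langle\bar{\R}(b),c_{(2)}\rangle$, apply the single-element hypothesis $\langle\bar{\R}(x),y\rangle=\langle\R^{-1}(y),x\rangle$ to each factor, and then repackage the result as $\langle\R^{-1}(c),ab\rangle$. The repackaging step requires the auxiliary fact that $\R^{-1}$ is a \emph{coalgebra} map. This is the main technical point of the proof, and I would establish it via the standard identification $\R^{-1}=\R\circ S_{A}$, which is valid because $\R$ is an algebra map from the Hopf algebra $A$: indeed $\R(a_{(1)})\R(S_{A}(a_{(2)}))=\R(a_{(1)}S_{A}(a_{(2)}))=\epsilon(a)1$. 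Composing the anti-coalgebra property of $\R$ with the anti-coalgebra property of $S_{A}$ then makes $\R^{-1}$ a coalgebra map (and, composing algebra with anti-algebra, an anti-algebra map). For the second identity of \eqref{patri1} I would split $\langle\bar{\R}(a),bc\rangle$ via the anti-coalgebra property $\Delta\bar{\R}(a)=\bar{\R}(a_{(2)})\otimes\bar{\R}(a_{(1)})$ of $\bar{\R}$ and then gather factors using that $\R^{-1}$ is anti-algebra, so $\R^{-1}(c)\R^{-1}(b)=\R^{-1}(bc)$.

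Finally, \eqref{patri2} reduces to a telescoping argument that presupposes the previous two steps. Apply \eqref{patri} to get $\partial^{R}(gh)=\partial^{R}g\ast\partial^{R}h$, replace each factor via the single-element hypothesis $\partial^{R}h=\R\ast\partial^{L}h\ast\R^{-1}$ to obtain $\R\ast\partial^{L}g\ast(\R^{-1}\ast\R)\ast\partial^{L}h\ast\R^{-1}$, cancel the central $\R^{-1}\ast\R=\eta\epsilon$ by convolution-invertibility, and apply \eqref{patri} once more in reverse to recognize the middle as $\partial^{L}(gh)$. The proof with $\bar{\R}$ in place of $\R$ is line-for-line identical, using $\bar{\R}^{-1}=\bar{\R}\circ S_{A}$. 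The only non-routine input in the whole argument is the coalgebra/anti-algebra structure of $\R^{-1}$ and $\bar{\R}^{-1}$ needed for \eqref{patri1}; everything else is formal manipulation of Sweedler indices and associativity in the convolution algebra $\mathrm{hom}(A,H)$.
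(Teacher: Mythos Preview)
Your proof is correct; the paper itself gives no details, stating only that the lemma ``can be proved in virtue of the definition of weakly quasitriangular dual pair.'' Your argument is exactly the kind of routine Sweedler-index verification the authors have in mind, and your observation that $\R^{-1}=\R\circ S_A$ (hence a coalgebra and anti-algebra map) is the one non-automatic ingredient needed to make \eqref{patri1} go through cleanly.
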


\noindent {\it Proof of Theorem \ref{theoWeak}.} \ For convenience, we denote
$R_{VV}$ by $R$ in the proof, and also give the equivalent
expressions in matrix form for those relations in Theorem
\ref{theoWeak}:
\begin{gather}
\langle m_{1}^{+}, T_{2}\rangle=(R_{VV})_{12},\quad \langle
m_{1}^{-},T_{2}\rangle=(R_{VV}^{-1})_{21}=P\circ
(R_{VV}^{-1})_{12}\circ P,
\\
\langle
m_{1}^{+},\tilde{T}_{2}\rangle=((R_{VV}^{t_{2}})^{-1})_{12},\quad
\langle
m_{1}^{-},\tilde{T}_{2}\rangle=(R_{VV}^{-1})^{t_{1}})^{-1}_{21}=P\circ(R_{VV}^{-1})^{t_{1}})^{-1}\circ
P,\\
\R(t)=m^{+},\quad \R(\tilde{t})=((m^{+})^{-1})^{t};\quad
\bar{\R}(t)=m^{-},\quad
\bar{\R}(\tilde{t})=((m^{-})^{-1})^{t}.\label{eqv00}
\end{gather}

{\it Step 1. The dual pairing.}

$\mathbb{C}\langle (m^{+})^{i}_{j},(m^{-})^{i}_{j}\rangle$ denotes the free algebra generated by $m^{\pm}$.
Obviously,
\eqref{tlidew1} and \eqref{tlidew2} define a dual pairing between $\mathbb{C}\langle t^{i}_{j},\widetilde{t}^{i}_{j}\rangle$ and
$\mathbb{C}\langle (m^{+})^{i}_{j},(m^{-})^{i}_{j}\rangle$.
By the QYBE,
we have
\begin{equation*}
\left.
\begin{array}{l}
R_{12}R_{13}R_{23}
=R_{23}R_{13}R_{12}
\Longleftrightarrow
R_{13}R_{12}R^{-1}_{23}
=R^{-1}_{23}R_{12}R_{13}\\
\Longleftrightarrow
R_{23}R_{12}^{-1}R_{13}^{-1}
=R_{13}^{-1}R_{12}^{-1}R_{23}
\Longleftrightarrow
R_{13}^{-1}R_{23}^{-1}R_{12}=R_{12}R_{23}^{-1}R_{13}^{-1}.
\end{array}
\right.
\end{equation*}
Associated with \eqref{R0}, \eqref{R2} and \eqref{R3} in Proposition
\ref{lematrix}, we obtain
\begin{equation}\label{matrix1}
\left.
\begin{array}{c}
\langle R_{12}m^{+}_{1}m^{+}_{2},T_{3}\rangle
=R_{12}R_{13}R_{23}
=R_{23}R_{13}R_{12}
=\langle m^{+}_{2}m^{+}_{1}R_{12},T_{3}\rangle,\\
\left.
\begin{array}{rl}
\langle R_{12}m^{+}_{1}m^{+}_{2},\tilde{T}_{3}\rangle
&=R_{12}(R^{t_{2}})^{-1}_{13}(R^{t_{2}})^{-1}_{23}
=(R^{t_{2}})^{-1}_{23}(R^{t_{2}})^{-1}_{13}R_{12}\\
&=\langle m^{+}_{2}m^{+}_{1}R_{12},T_{3}\rangle.
\end{array}
\right.
\end{array}
\right.
\end{equation}
\begin{equation}\label{matrix2}
\left.
\begin{array}{c}
\langle R_{23}m^{-}_{2}m^{-}_{3},T_{1}\rangle
=R_{23}R_{12}^{-1}R_{13}^{-1}
=R_{13}^{-1}R_{12}^{-1}R_{23}
=\langle m^{-}_{3}m^{-}_{2}R_{23},T_{1}\rangle,\\
\left.
\begin{array}{rl}
\langle R_{23}m^{-}_{2}m^{-}_{3},\tilde{T}_{1}\rangle
&=R_{23}((R^{-1})^{t_{1}})^{-1}_{12}((R^{-1})^{t_{1}})^{-1}_{13}\\
&=((R^{-1})^{t_{1}})^{-1}_{13}((R^{-1})^{t_{1}})^{-1}_{12}R_{23}
=\langle m^{-}_{3}m^{-}_{2}R_{23},\tilde{T}_{1}\rangle.
\end{array}
\right.
\end{array}
\right.
\end{equation}
\begin{equation}\label{matrix3}
\left.
\begin{array}{c}
\langle R_{13}m^{+}_{1}m^{-}_{3},T_{2}\rangle
=R_{13}R_{12}R^{-1}_{23}
=R^{-1}_{23}R_{12}R_{13}
=\langle m^{-}_{3}m^{+}_{1}R_{13},T_{2}\rangle,\\
\left.
\begin{array}{rl}
\langle R_{13}m^{+}_{1}m^{-}_{3},\tilde{T}_{2}\rangle
&=R_{13}(R^{t_{2}})^{-1}_{12}((R^{-1})^{t_{1}})^{-1}_{23}
=((R^{-1})^{t_{1}})^{-1}_{23}(R^{t_{2}})^{-1}_{12}R_{13}\\
&=\langle m^{-}_{3}m^{+}_{1}R_{13},\tilde{T}_{2}\rangle.
\end{array}
\right.
\end{array}
\right.
\end{equation}
So the dual pairing between $\mathbb{C}\langle
t^{i}_{j},\widetilde{t}^{i}_{j}\rangle$ and $\mathbb{C}\langle
(m^{+})^{i}_{j},(m^{-})^{i}_{j}\rangle$ can passe to the dual
pairing of $U_{q}^{\textrm{ext}}(\mathfrak{g})$ and
$\mathbb{C}\langle t^{i}_{j},\widetilde{t}^{i}_{j}\rangle$. On the
other hand, in view of
$\Delta^{\textrm{cop}}(m^{\pm})=m^{\pm}\otimes m^{\pm}$ and the
equalities \eqref{R4}, \eqref{R5}, \eqref{R6}, \eqref{R7} in
Proposition \ref{lematrix}, we obtain
\begin{equation}\label{matrix4}
\left.
\begin{array}{c}
\langle m^{+}_{1},R_{23}T_{2}T_{3}\rangle
=R_{23}R_{13}R_{12}
=R_{12}R_{13}R_{23}
=\langle m^{+}_{1},T_{3}T_{2}R_{23}\rangle,\\
\langle m^{-}_{3},R_{12}T_{1}T_{2}\rangle
R_{13}^{-1}R_{23}^{-1}R_{12}
=R_{12}R_{23}^{-1}R_{13}^{-1}
=\langle m^{-}_{3},T_{2}T_{1}R_{12}\rangle.
\end{array}
\right.
\end{equation}
\begin{equation}\label{matrix5}
\left.
\begin{array}{c}
\left.
\begin{array}{rl}
\langle m^{+}_{1},R^{t}_{23}\tilde{T}_{2}\tilde{T}_{3}\rangle
&=(R^{t})_{23}(R^{t_{2}})^{-1}_{13}(R^{t_{2}})^{-1}_{12}\\
&=(R^{t_{2}})^{-1}_{12}(R^{t_{2}})^{-1}_{13}(R^{t})_{23}
=\langle m^{+}_{1},\tilde{T}_{3}\tilde{T}_{2}R^{t}_{23}\rangle,
\end{array}
\right.\\
\left.
\begin{array}{rl}
\langle  m^{-}_{3},R^{t}_{12}\tilde{T}_{1}\tilde{T}_{2}\rangle
&=(R^{t})_{12}((R^{-1})^{t_{1}})^{-1}_{23}((R^{-1})^{t_{1}})^{-1}_{13}\\
&=((R^{-1})^{t_{1}})^{-1}_{13}((R^{-1})^{t_{1}})^{-1}_{23}(R^{t})_{12}
=\langle  m^{-}_{3},\tilde{T}_{2}\tilde{T}_{1}R^{t}_{12}\rangle.
\end{array}
\right.
\end{array}
\right.
\end{equation}
\begin{equation}\label{matrix6}
\left.
\begin{array}{c}
\left.
\begin{array}{rl}
\langle m^{+}_{1},(R^{t_{2}})^{-1}_{23}T_{2}\tilde{T}_{3}\rangle
&=(R^{t_{2}})^{-1}_{23}((R^{-1})^{t_{1}})^{-1}_{13}R_{12}\\
&=R_{12}(R^{t_{2}})^{-1}_{13}((R^{-1})^{t_{1}})^{-1}_{23}
=\langle m^{+}_{1},\tilde{T}_{3}T_{2}(R^{t_{2}})^{-1}_{23}\rangle,
\end{array}
\right.\\
\left.
\begin{array}{rl}
\langle m^{-}_{3},(R^{t_{2}})^{-1}_{12}T_{1}\tilde{T}_{2}\rangle
&=(R^{t_{2}})^{-1}_{12}((R^{-1})^{t_{1}})^{-1}_{23}R^{-1}_{13}\\
&=R^{-1}_{13}((R^{-1})^{t_{1}})^{-1}_{23}(R^{t_{2}})^{-1}_{12}
=\langle  m^{-}_{3},\tilde{T}_{2}T_{1}(R^{t_{2}})^{-1}_{23}\rangle.
\end{array}
\right.
\end{array}
\right.
\end{equation}
Then we prove that \eqref{tlidew1} and \eqref{tlidew2} define a dual pairing of $U_{q}^{\textrm{ext}}(\mathfrak{g})$ and $H_{R_{VV}}$.

{\it Step 2. $\R,\; \bar{\R}$ are algebra\,/\,anti-coalgebra maps.}

We set $R=A_{i}\otimes B_{i}$,
and by Lemma \ref{comm},
we obtain that
\begin{equation}\label{eqv0}
\left.
\begin{array}{rl}
Rm^{\pm}_{1}m^{\pm}_{2}=m^{\pm}_{2}m^{\pm}_{1}R
&\Longleftrightarrow
R(m^{\pm})^{-1}_{2}(m^{\pm})^{-1}_{1}=(m^{\pm})^{-1}_{1}(m^{\pm})^{-1}_{2}R\\
&\Longleftrightarrow
R^{t}((m^{\pm})^{-1})^{t}_{1}((m^{\pm})^{-1})^{t}_{2}=((m^{\pm})^{-1})^{t}_{2}((m^{\pm})^{-1})^{t}_{1}R^{t}.
\end{array}
\right.
\end{equation}
\begin{equation}\label{eqv1}
\left.
\begin{array}{l}
Rm^{\pm}_{1}m^{\pm}_{2}=m^{\pm}_{2}m^{\pm}_{1}R
\Longleftrightarrow
m^{\pm}_{1}R(m^{\pm})^{-1}_{2}=(m^{\pm})^{-1}_{2}R(m^{\pm})_{1}\\
\Longleftrightarrow
(m^{\pm}\otimes I)(A_{i}\otimes B_{i})(I\otimes (m^{\pm})^{-1})
=(I\otimes (m^{\pm})^{-1})(A_{i}\otimes B_{i})(m^{\pm}\otimes I)\\
\Longleftrightarrow
m^{\pm}A_{i}\otimes B_{i}(m^{\pm})^{-1}=A_{i}m^{\pm}\otimes (m^{\pm})^{-1}B_{i}\\
\Longleftrightarrow
m^{\pm}A_{i}\otimes [B_{i}(m^{\pm})^{-1}]^{t}=A_{i}m^{\pm}\otimes [(m^{\pm})^{-1}B_{i}]^{t}\\
\Longleftrightarrow
m^{\pm}A_{i}\otimes ((m^{\pm})^{-1})^{t}B_{i}^{t}=A_{i}m^{\pm}\otimes B_{i}^{t}((m^{\pm})^{-1})^{t}\\
\Longleftrightarrow
(m^{\pm}\otimes I)(I\otimes ((m^{\pm})^{-1})^{t})(A_{i}\otimes B_{i}^{t})
=(A_{i}\otimes B_{i}^{t})(I\otimes ((m^{\pm})^{-1})^{t})(m^{\pm}\otimes I)\\
\Longleftrightarrow
m^{\pm}_{1}((m^{\pm})^{-1})^{t}_{2}R^{t_{2}}=R^{t_{2}}((m^{\pm})^{-1})^{t}_{2}m^{\pm}_{1}.
\end{array}
\right.
\end{equation}
Then in virtue of \eqref{eqv0} and \eqref{eqv1},
we obtain that $\R, \bar{\R}$ are algebra homomorphisms.
On the other hand,
since $\Delta$ is an algebra homomorphism,
we have $$
\delta_{jk}\otimes 1=\delta_{jk}\Delta(1)=\Delta(\delta_{jk})=\Delta(I^{j}_{k})
=\Delta(((m^{+})^{-1})^{j}_{i}(m^{+})^{i}_{k})
=\Delta(((m^{+})^{-1})^{j}_{i})((m^{+})^{a}_{k}\otimes(m^{+})^{i}_{a}).$$
Then we obtain that
$
\Delta(((m^{+})^{-1})^{j}_{i})=((m^{+})^{-1})^{j}_{a}\otimes ((m^{+})^{-1})^{a}_{i},
$
so
\begin{gather}
\left.
\begin{array}{rl}
(\R\otimes\R)\Delta(t^{i}_{j})
&=\R(t^{i}_{a})\otimes\R(t^{a}_{j})
=(m^{+})^{i}_{a}\otimes (m^{+})^{a}_{j}\\
&=\Delta^{\textrm{cop}}((m^{+})^{i}_{j})
=\Delta^{\textrm{cop}}(\R(t^{i}_{j})),
\end{array}
\right.
\label{eqv4}
 \\
\left.
\begin{array}{rl}
(\R\otimes\R)\Delta(\tilde{t}^{i}_{j})
&=\R(\tilde{t}^{i}_{a})\otimes\R(\tilde{t}^{a}_{j})
=[((m^{+}))^{-1}]^{a}_{i}\otimes [((m^{+}))^{-1}]^{j}_{a}\\
&=\Delta^{\textrm{cop}}([((m^{+}))^{-1}]^{j}_{i})
=\Delta^{\textrm{cop}}(\R(\tilde{t}^{i}_{j})).
\end{array}
\right.\label{eqv5}
\end{gather}
We prove that $\R$ is an anti-coalgebra map by \eqref{eqv4} and
\eqref{eqv5}. In a similar analysis, we can prove that $\bar{\R}$ is
also an anti-coalgebra map.

{\it Step 3.  $\R, \bar{\R}$ are convolution-invertible.}

Since the upper (lower) triangularity of $m^{+}$ $\ (\,m^{-}\,)$,
the matrices $(m^{+})^{-1}$ $\ (\,(m^{-})^{-1}\,)$ are upper (lower)
triangular, then matrices $((m^{+})^{-1})^{t}$ $\
(((m^{-})^{-1})^{t})$ are lower (upper) triangular, and
$[((m^{+})^{-1})^{t}]^{i}_{i}[((m^{-})^{-1})^{t}]^{i}_{i}=[((m^{-})^{-1})^{t}]^{i}_{i}[((m^{+})^{-1})^{t}]^{i}_{i}=1.$
So the matrices $((m^{\pm})^{-1})^{t}$ are invertible. With these,
we define the following two maps $\R^{-1}, \bar{\R}^{-1}$ in the
convolution algebra
$\textrm{Hom}(H_{R_{VV}},U_{q}^{\textrm{ext}}(\mathfrak{g}))$,
\begin{gather}
\R^{-1}(t^{i}_{j})=((m^{+})^{-1})^{i}_{j},\quad
\R^{-1}(\tilde{t}^{i}_{j})=[((m^{+})^{-1})^{t}]^{-1}{}^{i}_{j};\label{m1}\\
\bar{\R}^{-1}(t^{i}_{j})=((m^{-})^{-1})^{i}_{j},\quad
\bar{\R}^{-1}(\tilde{t}^{i}_{j})=[((m^{-})^{-1})^{t}]^{-1}{}^{i}_{j}.\label{m2}
\end{gather}
By \eqref{m1},
for any $t^{i}_{j}, \tilde{t}^{i}_{j}$,
we have
\begin{gather}
\left.\begin{array}{rl}
(\R\ast\R^{-1})(t^{i}_{j})&=\R(t^{i}_{a})\R^{-1}(t^{a}_{j})
=(m^{+})^{i}_{a}((m^{+})^{-1})^{a}_{j}\\
&=(m^{+}(m^{+})^{-1})^{i}_{j}=I^{i}_{j}
=\delta_{ij}
=\eta\circ\epsilon(t^{i}_{j}),
\end{array}
\right.
\label{e1}
\\
\left.\begin{array}{rl}
(\R^{-1}\ast\R)(t^{i}_{j})&=\R^{-1}(t^{i}_{a})\R(t^{a}_{j})
=((m^{+})^{-1})^{i}_{a}(m^{+})^{a}_{j}\\
&=((m^{+})^{-1}m^{+})^{i}_{j}=I^{i}_{j}
=\delta_{ij}
=\eta\circ\epsilon(t^{i}_{j}),
\end{array}
\right.
\label{e2}
\\
\left.
\begin{array}{rl}
(\R\ast\R^{-1})(\tilde{t}^{i}_{j})&=\R(\tilde{t}^{i}_{a})\R^{-1}(\tilde{t}^{a}_{j})
=[(((m^{+})^{-1}))^{t}]^{i}_{a}[(((m^{+})^{-1})^{t})^{-1}]^{a}_{j}\\
&=[(((m^{+})^{-1}))^{t}(((m^{+})^{-1})^{t})^{-1}]^{i}_{j}=I^{i}_{j}
=\delta_{ij}
=\eta\circ\epsilon(\tilde{t}^{i}_{j}),
\end{array}\right.
\label{e3}
\\
\left.
\begin{array}{rl}
(\R^{-1}\ast\R)(\tilde{t}^{i}_{j})
&=\R^{-1}(\tilde{t}^{i}_{a})\R(\tilde{t}^{a}_{j})
=[(((m^{+})^{-1})^{t})^{-1}]^{i}_{a}[(((m^{+})^{-1}))^{t}]^{a}_{j}\\
&=[(((m^{+})^{-1})^{t})^{-1}(((m^{+})^{-1}))^{t}]^{i}_{j}
=I^{i}_{j}=\delta_{ij}
=\eta\circ\epsilon(\tilde{t}^{i}_{j}).
\end{array}\right.
\label{e4}
\end{gather}
So we obtain that
$\R\ast\R^{-1}=\R^{-1}\ast\R=\eta_{U_{q}^{\textrm{ext}}(\mathfrak{g})}\circ
\epsilon_{H_{R_{VV}}}$ owing to $\eqref{e1}, \eqref{e2}, \eqref{e3},
\eqref{e4}$.
$\bar{\R}\ast\bar{\R}^{-1}=\bar{\R}^{-1}\ast\bar{\R}=\eta_{U_{q}^{\textrm{ext}}(\mathfrak{g})}\circ
\epsilon_{H_{R_{VV}}}$ can be proved in a similar way.

{\it Step 4. The maps $\R, \bar{\R}$ satisfy the relations in \eqref{three1}.}

According to Lemma \ref{pat},
we only need to prove that the relations in \eqref{three1} hold for generators.
In view of
$\R^{-1}\ast \R=\R\ast\R^{-1}=\eta\circ\epsilon$,
then we have
\begin{gather*}
\left.
\begin{array}{rl}
\langle \R(t^{k}_{a})\R^{-1}(t^{a}_{l}),t^{i}_{j}\rangle
&=\langle \R(t^{k}_{a}),t^{i}_{b}\rangle\langle\R^{-1}(t^{a}_{l}),t^{b}_{j}\rangle
=\langle(m^{+})^{k}_{a},t^{i}_{b}\rangle\langle\R^{-1}(t^{a}_{l}),t^{b}_{j}\rangle\\
&=R^{ki}_{ab}\langle\R^{-1}(t^{a}_{l}),t^{b}_{j}\rangle
=\langle\delta_{kl},t^{i}_{j}\rangle
=\delta_{kl}\delta_{ij}\\
&=I^{ki}_{lj}
=\langle \R^{-1}(t^{k}_{a})\R(t^{a}_{l}),t^{i}_{j}\rangle
=\langle \R^{-1}(t^{k}_{a}),t^{i}_{b}\rangle\langle\R(t^{a}_{l}),t^{b}_{j}\rangle\\
&=\langle \R^{-1}(t^{k}_{a}),t^{i}_{b}\rangle\langle(m^{+})^{a}_{l},t^{b}_{j}\rangle
=\langle \R^{-1}(t^{k}_{a}),t^{i}_{b}\rangle R^{ab}_{lj}.
\end{array}
\right.
\end{gather*}
So
$
\langle \R^{-1}(t^{k}_{l}),t^{i}_{j}\rangle=(R^{-1})^{ki}_{lj},
$
then we obtain that
\begin{equation}\label{dual1}
\langle \bar{\R}(t^{i}_{j}),t^{k}_{l}\rangle=\langle (m^{-})^{i}_{j}),t^{k}_{l}\rangle
=(R^{-1})^{ki}_{lj}=\langle \R^{-1}(t^{k}_{l}),t^{i}_{j}\rangle.
\end{equation}
We also obtain the following relations in a similar way
\begin{gather}
\langle\bar{\R}(t^{i}_{j}),\tilde{t}^{k}_{l}\rangle=\langle(m^{-})^{i}_{j}),\tilde{t}^{k}_{l}\rangle=
[((R^{-1})^{t_{1}})^{-1}]^{ki}_{lj}=\langle \R^{-1}(\tilde{t}^{k}_{l}),t^{i}_{j}\rangle,\label{dual2}\\
\langle\bar{\R}(\tilde{t}^{i}_{j}),t^{k}_{l}\rangle=\langle((m^{-})^{-1})^{j}_{i}),t^{k}_{l}\rangle=
R^{kj}_{li}=\langle \R^{-1}(t^{k}_{l}),\tilde{t}^{i}_{j}\rangle,\label{dual3}\\
\langle\bar{\R}(\tilde{t}^{i}_{j}),\tilde{t}^{k}_{l}\rangle=\langle((m^{-})^{-1})^{j}_{i}),\tilde{t}^{k}_{l}\rangle=
(R^{-1})^{lj}_{ki}=\langle \R^{-1}(\tilde{t}^{k}_{l}),\tilde{t}^{i}_{j}\rangle.\label{dual4}
\end{gather}
According to \eqref{dual1}--\eqref{dual4},
we prove that
$\langle \bar{\R}a,b \rangle=\langle\R^{-1}b,a\rangle$ holds for any $a, b\in H_{R_{VV}}$.

Next we will prove the following relations for $\R, \bar{\R}$.
\begin{gather}
\partial^{R}(m^{\pm})^{k}_{l}\ast\R=\R \ast\partial^{L}(m^{\pm})^{k}_{l},\label{dual5}\\
\partial^{R}(m^{\pm})^{k}_{l}\ast\bar{\R}=\bar{\R} \ast\partial^{L}(m^{\pm})^{k}_{l}.\label{dual6}
\end{gather}
With the convolution product and the coalgebra structure,
we obtain
\begin{equation}\label{dual7}
\left.
\begin{array}{l}
(\partial^{R}(m^{\pm})^{k}_{l}\ast\R)(t^{i}_{j})\\=\partial^{R}(m^{\pm})^{k}_{l}(t^{i}_{a})\R(t^{a}_{j})
=(m^{\pm})^{b}_{l}\langle (m^{\pm})^{k}_{b},t^{i}_{a}\rangle\R(t^{a}_{j})\\
=R^{ki}_{ab}(m^{\pm})^{b}_{l}(m^{\pm})^{a}_{j}
=[Rm^{\pm}_{1}m^{\pm}_{2}]^{ki}_{lj}\\
=[m^{\pm}_{2}m^{\pm}_{1}R]^{ki}_{lj}
=(m^{\pm})^{i}_{a}(m^{\pm})^{k}_{b}R^{ba}_{lj}\\
=(m^{\pm})^{i}_{a}\langle (m^{\pm})^{b}_{l},t^{a}_{j}\rangle(m^{\pm})^{k}_{b}
=\R(t^{i}_{a})\partial^{L}(m^{\pm})^{k}_{l}(t^{a}_{j})\\
=(\R \ast\partial^{L}(m^{\pm})^{k}_{l})(t^{i}_{j}).
\end{array}
\right.
\end{equation}
\begin{equation}\label{dual8}
\left.
\begin{array}{l}
(\partial^{R}(m^{+})^{k}_{l}\ast\R)(\tilde{t}^{i}_{j})\\
=\partial^{R}(m^{+})^{k}_{l}(\tilde{t}^{i}_{a})\R(\tilde{t}^{a}_{j})
=(m^{+})^{b}_{l}\langle (m^{+})^{k}_{b},\tilde{t}^{i}_{a}\rangle\R(\tilde{t}^{a}_{j})\\
=((R^{t_{2}})^{-1})^{ki}_{ba}(m^{+})^{b}_{l}[((m^{+})^{-1})^{t}]^{a}_{j}
=[(R^{t_{2}})^{-1}m^{+}_{1}((m^{+})^{-1})^{t}_{2}]^{ki}_{lj}\\
=[((m^{+})^{-1})^{t}_{2}m^{+}_{1}(R^{t_{2}})^{-1}]^{ki}_{lj}
=[((m^{+})^{-1})^{t}]^{i}_{a}(m^{+})^{k}_{b}((R^{t_{2}})^{-1})^{ba}_{lj}\quad (\mbox{by}\  \eqref{eqv1})\\
=[((m^{+})^{-1})^{t}]^{i}_{a}\langle(m^{+})^{b}_{l},\tilde{t}^{a}_{j}\rangle(m^{+})^{k}_{b}
=\R(\tilde{t}^{i}_{a})\partial^{L}(m^{+})^{k}_{l}(\tilde{t}^{a}_{j})\\
=(\R \ast\partial^{L}(m^{\pm})^{k}_{l})(\tilde{t}^{i}_{j}).
\end{array}
\right.
\end{equation}
On the other hand,
we set $R^{-1}=C_{i}\otimes D_{i}$,
then according to $Rm^{+}_{1}m^{-}_{2}=m^{-}_{2}m^{+}_{1}R$,
we have
\begin{equation}\label{eqvm}
\left.
\begin{array}{l}
Rm^{+}_{1}m^{-}_{2}=m^{-}_{2}m^{+}_{1}R\Longleftrightarrow m^{+}_{1}m^{-}_{2}R^{-1}=R^{-1}m^{-}_{2}m^{+}_{1}\\
\Longleftrightarrow
m^{-}_{2}R^{-1}(m^{+})^{-1}_{1}
=(m^{+})^{-1}_{1}R^{-1}m^{-}_{2}\\
\Longleftrightarrow
(I\otimes m^{-})(C_{i}\otimes D_{i})((m^{+})^{-1}\otimes I)=((m^{+})^{-1}\otimes I)(C_{i}\otimes D_{i})(I\otimes m^{-})\\
\Longleftrightarrow
C_{i}(m^{+})^{-1}\otimes m^{-}D_{i}=(m^{+})^{-1}C_{i}\otimes D_{i}m^{-}\\
\Longleftrightarrow
((m^{+})^{-1})^{t}C_{i}^{t}\otimes m^{-}D_{i}=C_{i}^{t}((m^{+})^{-1})^{t}\otimes D_{i}m^{-}\\
\Longleftrightarrow
(I\otimes m^{-})(((m^{+})^{-1})^{t}\otimes I)(C_{i}^{t}\otimes D_{i})
=(C_{i}^{t}\otimes D_{i})(((m^{+})^{-1})^{t}\otimes I)(I\otimes m^{-})\\
\Longleftrightarrow
m^{-}_{2}((m^{+})^{-1})^{t}_{1}(R^{-1})^{t_{1}}=(R^{-1})^{t_{1}}((m^{+})^{-1})^{t}_{1}m^{-}_{2}\\
\Longleftrightarrow
((R^{-1})^{t_{1}})^{-1}m^{-}_{2}((m^{+})^{-1})^{t}_{1}=((m^{+})^{-1})^{t}_{1}m^{-}_{2}((R^{-1})^{t_{1}})^{-1}.
\end{array}
\right.
\end{equation}
\begin{equation}\label{dual9}
\left.
\begin{array}{l}
(\partial^{R}(m^{-})^{k}_{l}\ast\R)(\tilde{t}^{i}_{j})
=\partial^{R}(m^{-})^{k}_{l}(\tilde{t}^{i}_{a})\R(\tilde{t}^{a}_{j})
=(m^{-})^{b}_{l}\langle (m^{-})^{k}_{b},\tilde{t}^{i}_{a}\rangle\R(\tilde{t}^{a}_{j})\\
=((R^{-1})^{t_{1}})^{-1})^{ik}_{ab}(m^{-})^{b}_{l}[((m^{+})^{-1})^{t}]^{a}_{j}
=[((R^{-1})^{t_{1}})^{-1}m^{-}_{2}((m^{+})^{-1})^{t}_{1}]^{ik}_{jl}\\
=[((m^{+})^{-1})^{t}_{1}m^{-}_{2}((R^{-1})^{t_{1}})^{-1}]^{ik}_{jl}\quad
(\mbox{by}\ \eqref{eqvm})\\
=[((m^{+})^{-1})^{t}]^{i}_{a}(m^{-})^{k}_{b}((R^{-1})^{t_{1}})^{-1})^{ab}_{jl}\\
=[((m^{+})^{-1})^{t}]^{i}_{a}\langle(m^{-})^{b}_{l},\tilde{t}^{a}_{j}\rangle(m^{-})^{k}_{b}
=\R(\tilde{t}^{i}_{a})\partial^{L}(m^{-})^{k}_{l}(\tilde{t}^{a}_{j})
=(\R \ast\partial^{L}(m^{-})^{k}_{l})(\tilde{t}^{i}_{j}).
\end{array}
\right.
\end{equation}
By \eqref{dual7}, \eqref{dual8} and \eqref{dual9},
we obtain that
$\partial^{R}(m^{\pm})^{k}_{l}\ast\R=\R \ast\partial^{L}(m^{\pm})^{k}_{l}$,
thus prove the relation \eqref{dual5}.
\eqref{dual6} can also be proved in a similar way.

This completes the proof.\hfill $\Box$

\subsection{The generalized double-bosonization construction Theorem}
After building up the weakly quasitriangular dual pairs of Hopf
algebras suitably  for general $R$-matrices $R_{VV}$'s (especially
for the irregular $R$-matrices), we are in a position to establish
our generalized double-bosonization construction Theorem for any
$R_{VV}$. First of all, as we known, the two braided (co-)vector
algebras involved in the double-bosonization construction were
defined by Majid in the braided categories of left (right)
$A(R)$-comodules, i.e.,
 $V(R^{\prime},R)\in {}^{A(R)}\mathfrak M$, $V^{\vee}(R^{\prime},R_{21}^{-1})\in
 \mathfrak M^{A(R)}$. However, our starting objects are the dual
 pairs of Hopf algebras $(U_q^{\rm ext}(\mathfrak g), H_{R_{VV}})$ for the irregular
 cases. Now by definition, observing that the FRT-bialgebra $A(R)$ is a subbialgebra of $H_{R}$,
 we can view
 $V(R^{\prime},R)\in {}^{H_R}\mathfrak M$, $V^{\vee}(R^{\prime},R_{21}^{-1})\in
 \mathfrak M^{H_R}$.

Actually, in order to fit well into the application to the inductive
construction of the exceptional quantum groups, we have to refine
our framework.  We not only need to work with the pairs of their
central extensions: $( \widetilde{U_{q}^{\textrm{ext}}(\mathfrak
g)}, \widetilde{H_{R_{VV}}})$, where $
\widetilde{U_{q}^{\textrm{ext}}(\mathfrak
g)}=U_{q}^{\textrm{ext}}(\mathfrak g)\otimes k[c,c^{-1}]$, $
\widetilde{H_{R_{VV}}}=H_{R_{VV}}\otimes k[g,g^{-1}]$, but also to
make the normalization of $R_{VV}$ at certain eigenvalue of the
braiding $PR_{VV}$ to obtain a quantum group normalization constant
$\lambda$  and the needed pair $(R, R')$ determined by the minimal
polynomial of $PR_{VV}$. Noticing the relation $R_{VV}=\lambda R$,
and $H_{R_{VV}}\subset \widetilde{H_{R_{VV}}}$, we have $A(R)\cong
A(R_{VV})$ as bialgebras, consequently, we get a bialgebra embedding
$A(R)\hookrightarrow \widetilde{H_{R_{VV}}}$. This implies that our
concerned braided objects $V(R^{\prime},R)\in
{}^{\widetilde{H_{R_{VV}}}}\mathfrak M$,
$V^{\vee}(R^{\prime},R_{21}^{-1})\in
 \mathfrak M^{\widetilde{H_{R_{VV}}}}$. To be more precisely, we
 have

\begin{lemma}\label{pair}
Let $R_{VV}=\lambda R$ and
$(\widetilde{U_{q}^{\textrm{ext}}(\mathfrak g)},
\widetilde{H_{R_{VV}}})$ be the centrally extended weakly
quasitriangular dual pair defined by
\begin{equation}\label{ext}
\Delta(c)=c\otimes c,\quad
\Delta(g)=g\otimes g,\quad
\langle c,g\rangle=\lambda,\quad
\R(g)=c^{-1},\quad
\bar{\R}(g)=c.
\end{equation}
Then $V(R^{\prime},R)$ and $V^{\vee}(R^{\prime},R_{21}^{-1})$ are
braided groups in the categories
$^{\widetilde{H_{R_{VV}}}}\mathfrak{M}$ and $
\mathfrak{M}^{\widetilde{H_{R_{VV}}}}$, respectively, where the
coactions are given by $e^{i} \mapsto g\,t^{i}_{a}\otimes e^{a}$ and
$ f_{i} \mapsto f_{a}\otimes g\,t^{i}_{a}$.
\end{lemma}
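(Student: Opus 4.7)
The strategy is to transfer the known braided-group structure of $V(R',R)\in{}^{A(R)}\mathfrak{M}$ and $V^{\vee}(R',R_{21}^{-1})\in\mathfrak{M}^{A(R)}$ (established in \cite{majid4}) up to the enlarged categories over $\widetilde{H_{R_{VV}}}$, using that the extra central factor $g$ in the coaction does not disturb the algebra/coalgebra compatibilities but precisely neutralizes the normalization constant $\lambda$ when computing the induced braiding.

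First, I would verify that $\rho(e^i)=g\,t^i_a\otimes e^a$ is a well-defined comodule structure. Since $g$ is central and group-like in the tensor-product Hopf algebra $\widetilde{H_{R_{VV}}}=H_{R_{VV}}\otimes\mathbb{C}[g,g^{-1}]$, one has $\Delta(g\,t^i_a)=(g\,t^i_b)\otimes(g\,t^b_a)$, which yields coassociativity and the counit identity at once. Compatibility of $\rho$ with the defining quadratic relations of $V(R',R)$ reduces to the identity already checked by Majid for the bare coaction $e^i\mapsto t^i_a\otimes e^a$, because the central scalar $g^2$ can be pulled out of both sides of $\rho(e^i)\rho(e^j)=g^2\,t^i_at^j_b\otimes e^ae^b$; no new relation is needed. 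The intrinsic braided coalgebra data ($\underline{\Delta}(e^i)=e^i\otimes 1+1\otimes e^i$, $\underline{\epsilon}(e^i)=0$, $\underline{S}(e^i)=-e^i$) transfer verbatim once the braiding has been matched.

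The decisive step is to check that the braiding $\Psi$ induced by the coquasitriangular structure of $\widetilde{H_{R_{VV}}}$ reproduces the original $\Psi(e^i\otimes e^j)=R^{ji}_{ab}\,e^a\otimes e^b$ used to define $V(R',R)$ as a braided group. From the general recipe $\Psi(v\otimes w)=\mathfrak{r}(w^{(-1)}\otimes v^{(-1)})\,w^{(0)}\otimes v^{(0)}$ together with the multiplicativity axioms of Definition \ref{decoquasi}, I would decompose
\begin{equation*}
\mathfrak{r}(g\,t^j_d\otimes g\,t^i_c)=\mathfrak{r}(g\otimes t^i_e)\,\mathfrak{r}(g\otimes g)\,\mathfrak{r}(t^j_f\otimes t^e_c)\,\mathfrak{r}(t^f_d\otimes g).
\end{equation*}
The tensor-product decomposition of $\widetilde{H_{R_{VV}}}$ forces the trivial pairings $\mathfrak{r}(g\otimes t^i_e)=\delta^i_e$ and $\mathfrak{r}(t^f_d\otimes g)=\delta^f_d$, while the prescribed datum $\R(g)=c^{-1}$ combined with $\langle c,g\rangle=\lambda$ yields $\mathfrak{r}(g\otimes g)=\langle c^{-1},g\rangle=\lambda^{-1}$. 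Combined with $\mathfrak{r}(t^j_f\otimes t^e_c)=R_{VV}{}^{je}_{fc}=\lambda R^{je}_{fc}$, the factor $\lambda$ cancels and one obtains $\mathfrak{r}(g\,t^j_d\otimes g\,t^i_c)=R^{ji}_{dc}$, recovering exactly the required braiding.

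A completely parallel argument for $V^{\vee}(R',R_{21}^{-1})$ with right coaction $f_i\mapsto f_a\otimes g\,t^i_a$ in $\mathfrak{M}^{\widetilde{H_{R_{VV}}}}$ finishes the proof. The main obstacle is precisely this last cancellation: one must fix the values $\mathfrak{r}(g\otimes-)$, $\mathfrak{r}(-\otimes g)$ and $\mathfrak{r}(g\otimes g)$ consistently with the weakly quasitriangular pair of Theorem \ref{theoWeak} so that the normalization $\lambda$ disappears in the computed braiding; once this matching is secured, all braided Hopf-algebra axioms of $V(R',R)$ and $V^{\vee}(R',R_{21}^{-1})$ are routine transfers from the unextended theory of \cite{majid4}.
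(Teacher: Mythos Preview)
Your proposal is essentially correct and the decisive $\lambda$-cancellation is exactly the idea the paper relies on. There are, however, two differences from the paper's proof worth flagging.

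First, the paper devotes the bulk of its argument to verifying that the extension really \emph{is} a weakly quasitriangular dual pair: it computes the pairing values $\langle c,t^i_j\rangle$, $\langle c,\tilde t^i_j\rangle$, $\langle (m^\pm)^i_j,g\rangle$, checks $\langle\bar{\R}(a),b\rangle=\langle\R^{-1}(b),a\rangle$ on the new generators, and confirms the $\partial^R c=\R\ast\partial^Lc\ast\R^{-1}$ identities using centrality of $c$. You treat this as given by the statement; that is defensible, but since the lemma is where this structure first appears, you should at least remark that the axioms on the new generators are immediate from centrality and group-likeness.

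Second, for the braiding on $V(R',R)$ the paper does \emph{not} work with the universal $r$-form $\mathfrak r$ on $\widetilde{H_{R_{VV}}}$. Instead it passes to the induced right $\widetilde{U_q^{\mathrm{ext}}(\mathfrak g)}$-module via the dual pairing and computes
\[
\Psi(e^i\otimes e^j)=e^a\otimes e^i\lhd\R(g\,t^j_a)=e^a\otimes e^b\,\langle c^{-1}(m^+)^j_a,\,g\,t^i_b\rangle=\lambda^{-1}R_{VV}{}^{ji}_{ab}\,e^a\otimes e^b=R^{ji}_{ab}\,e^a\otimes e^b,
\]
only reverting to $\mathfrak r$ for the right-comodule case $V^\vee$. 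Your uniform use of $\mathfrak r$ is arguably cleaner, but it tacitly assumes a coquasitriangular structure on $\widetilde{H_{R_{VV}}}$ compatible with the data \eqref{ext}; this follows from the identity $\mathfrak r(a\otimes b)=\langle\R(a),b\rangle$, which in turn rests on the weakly quasitriangular axioms the paper checks. Either route works, but you should make explicit that your values $\mathfrak r(g\otimes g)=\lambda^{-1}$, $\mathfrak r(g\otimes t^i_j)=\mathfrak r(t^i_j\otimes g)=\delta_{ij}$ come from this identification rather than from an independently constructed $r$-form.
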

\begin{proof}
The duality pairing between
$\widetilde{U_{q}^{\textrm{ext}}(\mathfrak g)}$ and
$\widetilde{H_{R_{VV}}}$ can be induced by $\langle a\otimes
c,b\otimes g\rangle=\langle a,b\rangle\langle c,g\rangle$, for $a\in
U_{q}^{\textrm{ext}}(\mathfrak g), b\in H_{R_{VV}}$, and we have
\begin{gather*}
\langle c,g^{-1}\rangle=\langle c^{-1},g\rangle=\lambda^{-1},\quad
\langle c^{-1},g^{-1}\rangle=\lambda,\\
\langle c,t^{i}_{j}\rangle
=\langle 1\otimes c,t^{i}_{j}\otimes 1\rangle
=\langle1,t^{i}_{j}\rangle\langle c,1\rangle
=\epsilon(t^{i}_{j})\epsilon(c)
=\delta_{ij},\\
\langle c,\tilde{t}^{i}_{j}\rangle
=\langle 1\otimes c,\tilde{t}^{i}_{j}\otimes 1\rangle
=\langle1,\tilde{t}^{i}_{j}\rangle\langle c,1\rangle
=\epsilon(\tilde{t}^{i}_{j})\epsilon(c)
=\delta_{ij},\\
\langle(m^{\pm})^{i}_{j},g\rangle
=\langle(m^{\pm})^{i}_{j}\otimes 1,1\otimes g\rangle
=\langle(m^{\pm})^{i}_{j},1\rangle\langle 1,g\rangle
=\epsilon((m^{\pm})^{i}_{j})
=\delta_{ij}.
\end{gather*}
According to $\langle ab,c\rangle=\langle a,c_{(1)}\rangle\langle b,c_{(2)}\rangle$,
we obtain
\begin{equation}\label{ext1}
\langle ((m^{+})^{-1})^{i}_{j},g\rangle=\delta_{ij}=\langle[(((m^{+})^{-1})^{t})^{-1}]^{i}_{j},g\rangle.
\end{equation}
With these,
we have
\begin{gather}
\langle\bar{\R}(g),t^{i}_{j}\rangle=\langle\R^{-1}(t^{i}_{j}),g \rangle=\delta_{ij},\quad
\langle\bar{\R}(g),\tilde{t}^{i}_{j}\rangle=\langle\R^{-1}(\tilde{t}^{i}_{j}),g \rangle=\delta_{ij},\label{ext2}\\
\langle\bar{\R}(t^{i}_{j}),g\rangle=\langle\R^{-1}(g),t^{i}_{j} \rangle=\delta_{ij},\quad
\langle\bar{\R}(\tilde{t}^{i}_{j}),g\rangle=\langle\R^{-1}(g),\tilde{t}^{i}_{j}\rangle=\delta_{ij}.\label{ext3}
\end{gather}
In view of $\R^{-1}(g)=c, \bar{\R}^{-1}(g)=c^{-1}$, and
\eqref{ext2}, \eqref{ext3}, we obtain that $
\langle\bar{\R}(a),b\rangle=\langle\R^{-1}(b),a\rangle$ holds for
any $a, b\in\widetilde{H_{R_{VV}}}$. Furthermore, by
$(m^{\pm})^{i}_{j}c=c(m^{\pm})^{i}_{j}$, we can prove easily that
\begin{equation*}
\partial^{R}c=\R \ast(\partial^{L}c)\ast\R^{-1},\quad
\partial^{R}c=\bar{\R}\ast(\partial^{L}c)\ast\bar{\R}^{-1}.
\end{equation*}
Hence, \eqref{ext} gives actually a weakly quasitriangular dual pair
between $\widetilde{U_{q}^{\textrm{ext}}(\mathfrak g)}$ and
$\widetilde{H_{R_{VV}}}$.

Finally, we will check that our concerned objects $V(R^{\prime},R)$,
$V^{\vee}(R^{\prime},R_{21}^{-1})$ are indeed the braided ones in
the braided categories $^{\widetilde{H_{R_{VV}}}}\mathfrak{M}$ and $
\mathfrak{M}^{\widetilde{H_{R_{VV}}}}$, respectively. To do so, we
need to verify that the braidings induced by the left\,/\,right
$\widetilde{H_{R_{VV}}}$-comodule structures of $V(R^{\prime},R)$
and $V^{\vee}(R^{\prime},R_{21}^{-1})$, respectively, are consistent
with those of their braided groups structure.

In fact, under the coaction $e^{i} \mapsto g\,t^{i}_{b}\otimes
e^{b}$ and $ e^{j} \mapsto g\,t^{j}_{a}\otimes e^{a}, $ we have
$$
\left.
\begin{array}{ll}
\Psi(e^{i}\otimes e^{j})=e^{a}\otimes e^{i}\lhd\R(gt^{j}_{a})
&=e^{a}\otimes e^{i}\lhd c^{-1}(m^{+})^{j}_{a}
=e^{a}\otimes e^{b}\langle c^{-1}(m^{+})^{j}_{a},gt^{i}_{b}\rangle\\
&=\langle c^{-1},g\rangle\langle(m^{+})^{j}_{a},t^{i}_{b}\rangle e^{a}\otimes e^{b}
=\lambda^{-1}R_{VV}{}^{j}_{a}{}^{i}_{b}e^{a}\otimes e^{b}\\
&=\lambda^{-1}\lambda R^{j}_{a}{}^{i}_{b}e^{a}\otimes e^{b}
=R^{j}_{a}{}^{i}_{b}e^{a}\otimes e^{b}
\end{array}
\right.
$$
for any $e^{i}, e^{j}\in V(R^{\prime},R)$. Using the right
$\widetilde{H_{R_{VV}}}$-comodule structure of
$V^{\vee}(R^{\prime},R_{21}^{-1})$, we can also obtain that $
\Psi(f_{i}\otimes f_{j})=(f_{b}\otimes
f_{a})\mathfrak{r}(g\,t^{a}_{i}\otimes g\,t^{b}_{j}) =f_{b}\otimes
f_{a}R^{ab}_{ij}$. The proof is complete.
\end{proof}

By the above lemma, we obtain dually-paired braided groups
$V(R^{\prime},R)$ and $V^{\vee}(R^{\prime},R_{21}^{-1})$ in the
categories ${}_{\widetilde{U_{q}^{\textrm{ext}}(\mathfrak
g)}}\mathfrak{M}, \
\mathfrak{M}_{\widetilde{U_{q}^{\textrm{ext}}(\mathfrak g)}}$,
respectively. With these, starting from general $R$-matrices
$R_{VV}$'s (especially for the irregular $R$-matrices), we can
arrive at our generalized version of double-bosonization
construction Theorem.
\begin{theorem}\label{cor1}
Let $R_{VV}$ be the $R$-matrix associated to an irreducible $U_{q}(\mathfrak{g})$-module $V$ with a minuscule highest weight.
There exists a normalization constant $\lambda$ such that $\lambda R=R_{VV}$. Then
the enlarged quantum group
$U=U(V^{\vee}(R^{\prime},R_{21}^{-1}),\widetilde{U_{q}^{ext}(\mathfrak
g)},V(R^{\prime},R))$ has the relations
\begin{gather*}
cf_{i}=\lambda f_{i}c,\quad
e^{i}c=\lambda ce^{i},\quad
[c,m^{\pm}]=0,\quad
[e^{i},f_{j}]=\delta_{ij}\frac{(m^{+})^{i}_{j}c^{-1}-c(m^{-})^{i}_{j}}{q_{\ast}-q_{\ast}^{-1}};\\
e^{i}(m^{+})^{j}_{k}=R_{VV}{}^{ji}_{ab}(m^{+})^{a}_{k}e^{b},\quad
(m^{-})^{i}_{j}e^{k}=R_{VV}{}^{ki}_{ab}e^{a}(m^{-})^{b}_{j},\\
(m^{+})^{i}_{j}f_{k}=f_{b}(m^{+})^{i}_{a}R_{VV}{}^{ab}_{jk},\quad
f_{i}(m^{-})^{j}_{k}=(m^{-})^{j}_{b}f_{a}R_{VV}{}^{ab}_{ik},
\end{gather*}
and the coproduct: $$\Delta c=c\otimes c, \quad \Delta
e^{i}=e^{a}\otimes (m^{+})^{i}_{a}c^{-1}+1\otimes e^{i}, \quad
\Delta f_{i}=f_{i}\otimes 1+c(m^{-})^{a}_{i}\otimes f_{a},$$ and the
counit $\epsilon e^{i}=\epsilon f_{i}=0$.
\end{theorem}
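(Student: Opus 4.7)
The overall plan is to apply Majid's double-bosonization Theorem \ref{ml1} directly to the centrally extended weakly quasitriangular dual pair $(\widetilde{U_{q}^{\textrm{ext}}(\mathfrak g)},\widetilde{H_{R_{VV}}})$ constructed in Lemma \ref{pair}, with the dually-paired braided groups $B=V(R',R)$ and $C=V^{\vee}(R',R_{21}^{-1})$ viewed inside the braided categories ${\mathfrak M}_{\widetilde{U_{q}^{\textrm{ext}}(\mathfrak g)}}$ and ${}_{\widetilde{U_{q}^{\textrm{ext}}(\mathfrak g)}}\mathfrak M$ via the comodule structures $e^i\mapsto g\,t^i_a\otimes e^a$ and $f_i\mapsto f_a\otimes g\,t^i_a$. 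Since $e^i$ and $f_i$ are primitive in their respective braided Hopf algebras, Remark \ref{Cross} specializes Majid's general cross-relation formula to commutators and halved coproducts; the theorem is then a matter of substituting the explicit pairings from Theorem \ref{theoWeak} and the values $\R(g)=c^{-1}$, $\bar{\R}(g)=c$, $\langle c,g\rangle=\lambda$ from \eqref{ext} into these simplified formulas.

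First I would derive the coproducts, counit and the antipode. Applying $\Delta b=b^{\overline{(2)}}\otimes\R(b^{\overline{(1)}})+1\otimes b$ of Remark \ref{Cross} to $b=e^i$ gives $\Delta e^i=e^a\otimes\R(g\,t^i_a)+1\otimes e^i=e^a\otimes (m^+)^i_a c^{-1}+1\otimes e^i$, using that $\R$ is an algebra map (Theorem \ref{theoWeak}) and that $c$ is central; the formula $\Delta f_i=f_i\otimes 1+c(m^-)^a_i\otimes f_a$ comes out of the right-handed twin formula in Remark \ref{Cross}, while $\Delta c=c\otimes c$ and $\epsilon(e^i)=\epsilon(f_i)=0$ are standard for the bosonization. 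The scalar relations $cf_i=\lambda f_ic$ and $e^ic=\lambda ce^i$ follow immediately from the action of $c$ on degree-one primitive braided generators, which is by the pairing value $\langle c,g\rangle=\lambda$ through the coactions of Lemma \ref{pair}; commutativity $[c,m^{\pm}]=0$ is by the central extension.

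Next I would derive the mixed commutation relations between the braided generators and $m^{\pm}$. These are exactly the adjoint actions of $\widetilde{U_{q}^{\textrm{ext}}(\mathfrak g)}$ on the braided (co-)vector algebras transferred through the dual pairing \eqref{tlidew1}–\eqref{tlidew2}: evaluating $(m^+)^j_k\rhd e^i=\langle(m^+)^j_k,g\,t^i_a\rangle e^a=R_{VV}{}^{ji}_{ka}e^a$ against the bosonization cross-relation $(m^+)^j_k\cdot e^i=(m^+_{(1)}\rhd e^i)(m^+_{(2)})$ reproduces, after using $\Delta^{\text{cop}}(m^{\pm})=m^{\pm}\otimes m^{\pm}$ and the identity $R_{12}m^{+}_1 m^{+}_2 = m^{+}_2 m^{+}_1 R_{12}$, the displayed identity $e^i(m^+)^j_k=R_{VV}{}^{ji}_{ab}(m^+)^a_k e^b$. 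The three companion relations are obtained in exactly the same manner from the right coaction for $f_i$ and from the corresponding right action of $m^{-}$.

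The main obstacle, as I see it, is the commutator $[e^i,f_j]$. Specializing the second formula of Remark \ref{Cross} yields
\[
[e^i,f_j]=\R(g\,t^i_a)\langle f_j,e^a\rangle-\langle f_b,e^i\rangle\,\bar{\R}(g\,t^j_b)=c^{-1}(m^+)^i_j-c(m^-)^j_i
\]
up to the duality normalization constant. To turn this into the stated Cartan-like expression one must bring in the quantum-group normalization $\lambda$ that rescales $R_{VV}=\lambda R$ in the pairing $\langle f_i,e^j\rangle=\delta_{ij}$: the factor $(q_*-q_*^{-1})^{-1}$ arises precisely as $\lambda$ (or its reciprocal) determined by the chosen eigenvalue of the braiding $PR_{VV}$ where one normalizes, while the off-diagonal vanishing and the reduction of $(m^-)^j_i$ to $(m^-)^i_j$ (on $\delta_{ij}$-support) should come from compatibility of the pair $(R,R')$ with the minimal polynomial of $PR_{VV}$ supplied via Remark \ref{rem1}. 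Finally, the antipode is read off by the two extensions $S(chb)=(Sb)\cdot S(ch)$ and $S(chb)=S(hb)\cdot(Sc)$ of Theorem \ref{ml1}, and one verifies compatibility with the relations above to finish the proof.
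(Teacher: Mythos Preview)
Your approach is exactly the paper's: invoke the weakly quasitriangular dual pair of Lemma \ref{pair}, specialize Majid's Theorem \ref{ml1} via Remark \ref{Cross} using primitivity of $e^i,f_j$, and read off the relations by substituting $\R(t)=m^+$, $\bar{\R}(t)=m^-$, $\R(g)=c^{-1}$, $\bar{\R}(g)=c$ together with the pairings of Theorem \ref{theoWeak}. The paper's own proof is literally two sentences to this effect, so you have supplied more detail than the authors do.

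One point to clean up: your speculation that the factor $(q_*-q_*^{-1})^{-1}$ in $[e^i,f_j]$ ``arises precisely as $\lambda$'' is not correct. With the pairing $\langle f_j,e^i\rangle=\delta_{ij}$ as written in Section~2, Remark \ref{Cross} gives $[e^i,f_j]=\delta_{ij}\bigl((m^+)^i_ic^{-1}-c(m^-)^i_i\bigr)$ with no denominator. The displayed formula in the theorem is obtained by \emph{rescaling} the duality pairing to $\langle f_j,e^i\rangle=\delta_{ij}/(q_*-q_*^{-1})$ (equivalently, rescaling $e^i$ or $f_i$), which is the standard convention in Majid's Corollary~5.5 so that the commutator matches the Drinfeld--Jimbo relation $[E_\alpha,F_\alpha]=(K_\alpha-K_\alpha^{-1})/(q_\alpha-q_\alpha^{-1})$ after the identifications in Section~4. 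The constant $q_*$ is not determined by $\lambda$; it is the $q_{d_\alpha}$ of the new simple root $\alpha$ being adjoined (e.g.\ $q_*=q^{1/2}$ for $F_4$, $q_*=q^3$ for $G_2$), and is really part of the identification data rather than something derived inside the proof. Your remark that ``$(m^-)^j_i$ reduces to $(m^-)^i_j$ on $\delta_{ij}$-support'' is fine and is all that is needed there.
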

\begin{proof}
The weakly quasitriangular dual pair $(\widetilde{U_{q}^{\textrm{ext}}(\mathfrak g)}, \widetilde{H_{R_{VV}}})$
yields the cross relation, by Remark \ref{Cross}.
The coalgebra structure follows from Theorem \ref{ml1} due to Majid.
\end{proof}

\section{Applications: Type-crossing Constructions of $U_q(F_4)$ and $U_q(G_2)$}
This section is devoted to using Theorem 3.4 to give the type-crossing constructions
of $U_q(F_4)$ and $U_q(G_2)$. Among which, the first crucial point
is to choose an appropriate $U_{q}(\mathfrak{g}')$-module
for a Lie subalgebra $\mathfrak g'\subset\mathfrak g$ of corank $1$
to obtain a suitable $R$-matrix $R_{VV}$ (irregular in the
exceptional cases).

\subsection{Type-crossing of $U_{q}(F_{4})$ via $U_q(B_3)$}
Let us begin with $U_q(B_{3})$ and its 8-dimensional spin module $V$,
which is given by the following Figure 1 (see \cite{hk}).

\setlength{\unitlength}{1mm}
\begin{picture}(50,70)
\put(23,0){Figure $1$.~~spin module $V$ for $U_{q}(\mathfrak{so}_{7})$}
\put(40,5){$(-,-,-)$}
\put(35,12){\line(3,-2){8}}
\put(36,8){$\alpha_{3}$}
\put(30,13){$(-,-,+)$}
\put(35,14){\line(0,1){9}}
\put(36,18){$\alpha_{2}$}
\put(25,28){$\alpha_{3}$}
\put(43,28){$\alpha_{1}$}
\put(25,38){$\alpha_{1}$}
\put(43,38){$\alpha_{3}$}
\put(30,23){(--,+,--)}
\put(36,25){\line(3,2){10}}
\put(25,32){\line(3,-2){10}}
\put(20,33){(--,+,+)}
\put(26,35){\line(3,2){10}}
\put(40,33){$(+,-,-)$}
\put(30,43){(+,--,+)}
\put(37,42){\line(3,-2){10}}
\put(35,44){\line(0,1){9}}
\put(36,48){$\alpha_{2}$}
\put(25,58){$\alpha_{3}$}
\put(30,53){(+,+,--)}
\put(25,62){\line(3,-2){10}}
\put(20,63){(+,+,+)}
\end{picture}

The representation space
$V=\text{Span}_\mathbb{C}\{v_{1},\cdots,v_{8}\}$. Let
$(s_{1},s_{2},s_{3})$ denote the basis vector of weight
$\frac{1}{2}(s_{1}\varepsilon_{1}+s_{2}\varepsilon_{2}+s_{3}\varepsilon_{3})$,
$s_{i}=\pm$. Every basis vector $v_{i}$ is denoted by $
v_{1}=(-,-,-), v_{2}=(-,-,+), v_{3}=(-,+,-), v_{4}=(-,+,+),
v_{5}=(+,-,-), v_{6}=(+,-,+), v_{7}=(+,+,-), v_{8}=(+,+,+). $
$E_{i},F_{i}$'s actions can be read off directly from Figure $1$,
then $K_{i},H_{i}$'s actions can be obtained by
$[E_{i},F_{j}]=\delta_{ij}\frac{K_{i}-K_{i}^{-1}}{q_{i}-q_{i}^{-1}}$
(or
$\delta_{ij}\frac{e^{d_{i}hH_{i}}-e^{-d_{i}hH_{i}}}{e^{d_{i}h}-e^{-d_{i}h}})$.
Moreover,
we observe that the spin module $V$ is a minuscule weight module, and $E_{\beta}(v_{m})=v_{n} \Longleftrightarrow
F_{\beta}(v_{n})=v_{m}$, for some $m,\,n$ (see \cite{hk}), as well as $E_{i}^{k},
F_{i}^{k}$ are $0$ for all $k\geq 2$.
Based on these, we can prove the following
\begin{lemma}\label{weight}
$\mu_{m}$ denote the weight of basis vector $ v_{m}$ for any $m$.
If there exist $v_i, v_j$ and $v_k, v_l$ such that
$E_{i_1}^{r_1}E_{i_2}^{r_2}\cdots E_{i_s}^{r_s}(v_i)=v_k,$
$F_{j_1}^{t_1}F_{j_2}^{t_2}\cdots F_{j_s}^{t_s}(v_j)=v_l,$
where
$\{j_1, j_2, \cdots, j_s\}\in\{i_{w(1)}, i_{w(2)}, \cdots, i_{w(s)}|w\in Sym_s\}$,
and $t_k=r_{a}$ when $j_{k}=i_{a}$,
$r_m\in\{0,1\}$ for any $m$,
then we have $(\mu_{i}, \mu_{j})=(\mu_{k}, \mu_{l})$.
\end{lemma}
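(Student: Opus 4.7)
The plan is to exploit the fact that the $8$-dimensional spin module $V$ of $U_q(B_3)$ is minuscule: all eight weights $\frac12(\pm\varepsilon_1\pm\varepsilon_2\pm\varepsilon_3)$ lie in a single Weyl group orbit, so they share a common norm $(\mu_m,\mu_m)=\tfrac34$ for every $m$. This equal-norm property will be the essential input.

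First I would translate the two hypotheses into weight shifts. Each application of $E_{i_a}$ raises the weight of a vector by $\alpha_{i_a}$, and each $F_{j_b}$ lowers it by $\alpha_{j_b}$. Since $\{j_1,\ldots,j_s\}$ is a permutation of $\{i_1,\ldots,i_s\}$ with matching multiplicities $t_k=r_a$ whenever $j_k=i_a$, setting
\[
\gamma:=\sum_{a=1}^s r_a\,\alpha_{i_a}
\]
one obtains $\mu_k=\mu_i+\gamma$ and $\mu_l=\mu_j-\gamma$. Note that although the $E_{i_a}$'s (respectively $F_{j_b}$'s) need not commute, the net weight shift of any such composite depends only on the multiset of indices with exponents, so the identities for $\mu_k$ and $\mu_l$ are unambiguous.

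Next I invoke minusculeness. From $(\mu_k,\mu_k)=(\mu_i,\mu_i)$ and $(\mu_l,\mu_l)=(\mu_j,\mu_j)$, expanding the inner products gives
\[
2(\mu_i,\gamma)+(\gamma,\gamma)=0,\qquad -2(\mu_j,\gamma)+(\gamma,\gamma)=0,
\]
equivalently $(\mu_i,\gamma)=-\tfrac12(\gamma,\gamma)$ and $(\mu_j,\gamma)=\tfrac12(\gamma,\gamma)$. Substituting these into
\[
(\mu_k,\mu_l)=(\mu_i+\gamma,\mu_j-\gamma)=(\mu_i,\mu_j)-(\mu_i,\gamma)+(\gamma,\mu_j)-(\gamma,\gamma)
\]
immediately collapses the right-hand side to $(\mu_i,\mu_j)+\tfrac12(\gamma,\gamma)+\tfrac12(\gamma,\gamma)-(\gamma,\gamma)=(\mu_i,\mu_j)$, which is the desired equality.

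The argument is essentially a length-preservation identity disguised as a combinatorial hypothesis about $E$- and $F$-chains; no step is a genuine obstacle once minusculeness has been identified as the crucial feature of $V$. The only mild subtlety is the unambiguity of the weight shift under the noncommuting raising/lowering operators, which is handled by the multiset observation above and by the given assumption that the asserted compositions send $v_i\mapsto v_k$ and $v_j\mapsto v_l$ (so in particular are nonzero).
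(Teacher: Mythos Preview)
Your proof is correct and takes a genuinely different, more streamlined route than the paper's. The paper proceeds by induction on $r_1+\cdots+r_s$: the base case uses $E_\beta^2=F_\beta^2=0$ together with $[E_\beta,F_\beta]=\frac{K_\beta-K_\beta^{-1}}{q_\beta-q_\beta^{-1}}$ to extract the identities $(\mu_i,\beta)=-\tfrac{(\beta,\beta)}{2}$ and $(\mu_j,\beta)=\tfrac{(\beta,\beta)}{2}$ for a single simple root $\beta$, then a separate case $n=2$ is worked out explicitly, and finally an inductive step peels off one simple root factor at a time. What you do differently is to bypass the induction entirely by invoking minusculeness globally: since every weight of the spin module has the same norm, the two equalities $(\mu_k,\mu_k)=(\mu_i,\mu_i)$ and $(\mu_l,\mu_l)=(\mu_j,\mu_j)$ immediately yield $(\mu_i,\gamma)=-\tfrac12(\gamma,\gamma)$ and $(\mu_j,\gamma)=\tfrac12(\gamma,\gamma)$ for the total shift $\gamma$, and the conclusion follows in one line. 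Your argument is shorter and isolates the essential geometric content (length preservation under a Weyl-orbit) cleanly; the paper's approach, while more laborious, has the minor pedagogical virtue of deriving the key inner-product identities directly from the $\mathfrak{sl}_2$-string structure without appealing to the orbit statement, though it is ultimately using the same underlying feature of minuscule modules.
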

\begin{proof}
We will prove this Lemma by induction on $r_1+r_2+\cdots+r_s$.

{\bf(1)} When $r_1+r_2+\cdots+r_s=1$,
namely,
the situation of simple root vectors.

According to the spin
representation, suppose that there exist simple root vectors
$E_{\beta}$ and $F_{\beta}$ such that $ E_{\beta}(v_{i})=v_{k},
F_{\beta}(v_{j})=v_{l}, $
which is equivalent to $
E_{\beta}(v_{l})=v_{j}, F_{\beta}(v_{k})=v_{i},$
then $ \mu_{k}=\mu_{i}+\beta,
\mu_{l}=\mu_{j}-\beta. $ Owing to $E_{\beta}^{2}, F_{\beta}^{2}$ are
zero action, we obtain $
E_{\beta}(v_{j})=E_{\beta}(E_{\beta}(v_{l}))=0,
F_{\beta}(v_{i})=F_{\beta}(F_{\beta}(v_{k}))=0. $ According to $
E_{\beta}F_{\beta}-F_{\beta}E_{\beta}=
\frac{K_{\beta}-K_{\beta}^{-1}}{q_{\beta}-q_{\beta}^{-1}}, $ both
sides acting on $v_{i},\, v_{j}$, respectively, we obtain
\begin{gather*}
[E_{\beta},F_{\beta}](v_{i})=
\frac{K_{\beta}-K_{\beta}^{-1}}{q_{\beta}-q_{\beta}^{-1}}(v_{i}),
\Longrightarrow (\mu_{i},\beta)=-\frac{(\beta,\beta)}{2},
\\
[E_{\beta},F_{\beta}](v_{j})=
\frac{K_{\beta}-K_{\beta}^{-1}}{q_{\beta}-q_{\beta}^{-1}}(v_{j}),
\Longrightarrow (\beta,\mu_{j})=\frac{(\beta,\beta)}{2}.
\end{gather*}
Then we have the following equalities
$$
\left.
\begin{array}{rl}
(\mu_{k},\mu_{l})
&=(\mu_{i}+\beta,\mu_{j}-\beta)
=(\mu_{i},\mu_{j})-(\mu_{i},\beta)+(\beta,\mu_{j})-(\beta,\beta)\\
&=(\mu_{i},\mu_{j})-(-\frac{(\beta,\beta)}{2})+\frac{(\beta,\beta)}{2}-(\beta,\beta)
=(\mu_{i},\mu_{j}).
\end{array}
\right.
$$

{\bf(2)} When $r_1+r_2+\cdots+r_s=2$.

If there are basis vectors $v_{k}$ and $v_{l}$ such that
$E_{i}E_{j}(v_{k})=v_{m}$ and $F_{j}F_{i}(v_{l})=v_{n}$(or~$F_{i}F_{j}(v_{l})=v_{n}$), then
$\mu_{m}=\mu_{k}+\alpha_{i}+\alpha_{j},
\mu_{n}=\mu_{l}-\alpha_{i}-\alpha_{j}$.
Let us consider the case of $E_{i}E_{j}(v_{k})=v_{m}$ and $F_{j}F_{i}(v_{l})=v_{n}$.
From these,
we have the
following equivalent relations
\begin{gather}
E_{i}E_{j}(v_{k})=v_{m}
\Longleftrightarrow
E_{j}(v_{k})=v_{x},\quad
E_{i}(v_{x})=v_{m},\label{sym1}\\
F_{j}F_{i}(v_{l})=v_{n}
\Longleftrightarrow
F_{i}(v_{l})=v_{y},\quad
F_{j}(v_{y})=v_{n}\label{sym2}
\end{gather}
for some basis vectors $v_{x}, v_{y}$. 
By \eqref{sym1} and \eqref{sym2}, we obtain the following
equalities for these weights based on the above analysis for simple
root vectors.
\begin{equation}\label{sym3}
\left\{
\begin{array}{l}
(\mu_{k},\alpha_{j})=-\frac{(\alpha_{j},\alpha_{j})}{2},\quad
(\mu_{y},\alpha_{j})=\frac{(\alpha_{j},\alpha_{j})}{2},\\
(\mu_{x},\alpha_{i})=-\frac{(\alpha_{i},\alpha_{i})}{2},\quad
(\mu_{l},\alpha_{i})=\frac{(\alpha_{i},\alpha_{i})}{2}.
\end{array}
\right.
\end{equation}
According to $(\mu_{y},\alpha_{j})=\frac{(\alpha_{j},\alpha_{j})}{2}$
and $(\mu_{x},\alpha_{i})=-\frac{(\alpha_{i},\alpha_{i})}{2}$,
respectively,
we obtain
$$(\mu_{l},\alpha_{j})=(\alpha_{i},\alpha_{j})+\frac{(\alpha_{j},\alpha_{j})}{2},\quad
(\mu_{k},\alpha_{i})=-(\alpha_{i},\alpha_{j})-\frac{(\alpha_{i},\alpha_{i})}{2}.$$
Then we have the following relations by \eqref{sym3},
$$
\left\{
\begin{array}{l}
(\mu_{l},\alpha_{i}+\alpha_{j})=(\alpha_{i},\alpha_{j})+\frac{(\alpha_{i},\alpha_{i})+(\alpha_{j},\alpha_{j})}{2},\\
(\mu_{k},\alpha_{i}+\alpha_{j})=-(\alpha_{i},\alpha_{j})-\frac{(\alpha_{i},\alpha_{i})+(\alpha_{j},\alpha_{j})}{2}.
\end{array}
\right.
$$
With these,
we have
$$
\left.
\begin{array}{rl}
(\mu_{m},\mu_{n})&=(\mu_{k}+\alpha_{i}+\alpha_{j},\mu_{l}-\alpha_{i}-\alpha_{j})\\
&=(\mu_{k},\mu_{l})-(\mu_{k},\alpha_{i}+\alpha_{j})+(\mu_{l},\alpha_{i}+\alpha_{j})
-(\alpha_{i}+\alpha_{j},\alpha_{i}+\alpha_{j})\\
&=(\mu_{k},\mu_{l})+(\alpha_{i},\alpha_{i})+(\alpha_{j},\alpha_{j})+2(\alpha_{i},\alpha_{j})
-(\alpha_{i}+\alpha_{j},\alpha_{i}+\alpha_{j})\\
&=(\mu_{k},\mu_{l}).
\end{array}
\right.
$$
In a similar way,
we also prove $(\mu_{m},\mu_{n})=(\mu_{k},\mu_{l})$ when $E_{i}E_{j}(v_{k})=v_{m}$ and $F_{i}F_{j}(v_{l})=v_{n}$.

{\bf (3)} Suppose that the Proposition is correct for $r_1+r_2+\cdots+r_s=n-1~(n\geq 2)$,
then we will consider the case of $r_1+r_2+\cdots+r_s=n$.

If there are $v_i, v_j, v_k, v_l$ such that
$E_{i_1}^{r_1}E_{i_2}^{r_2}\cdots E_{i_s}^{r_s}(v_i)=v_k,$
$F_{j_1}^{t_1}F_{j_2}^{t_2}\cdots F_{j_s}^{t_s}(v_j)=v_l,$
and $r_1\neq 0,$
$t_1\neq 0$,
then we have the following equivalent relations:
\begin{gather*}
E_{i_1}^{r_1}E_{i_2}^{r_2}\cdots E_{i_s}^{r_s}(v_i)=v_k
\Longleftrightarrow
E_{i_1}^{r_1-1}E_{i_2}^{r_2}\cdots E_{i_s}^{r_s}(v_i)=v_x,\,
E_{i_1}(v_x)=v_k,\\
F_{j_1}^{t_1}F_{j_2}^{t_2}\cdots F_{j_s}^{t_s}(v_j)=v_l
\Longleftrightarrow
F_{j_1}^{t_1-1}F_{j_2}^{t_2}\cdots F_{j_s}^{t_s}(v_j)=v_y,\,
F_{j_1}(v_y)=v_l.
\end{gather*}
Thus we have
$(\mu_{i},\mu_{j})=(\mu_x, \mu_y)$ by supposition,
and $(\mu_x, \mu_y)=(\mu_{k},\mu_{l})$ by {\bf (1)},
then we prove $(\mu_{i},\mu_{j})=(\mu_{k},\mu_{l})$.
According to {\bf (1), (2), (3)},
the proof is complete.
\end{proof}
For simplicity,
we denote $E_{i_1}^{r_1}E_{i_2}^{r_2}\cdots E_{i_s}^{r_s},\,
F_{j_1}^{t_1}F_{j_2}^{t_2}\cdots F_{j_s}^{t_s}$ in the above Lemma \ref{weight} by $E^{\underline{r}}_{\underline{i}}$,
$E^{\underline{t}}_{\underline{j}}$,
respectively.
Corresponding to this spin representation,
we get the upper triangular $64\times 64$ $R$-matrix $R_{VV}$
associated with universal $R$-matrix and the above basis of representation space.
The explicit expressions of root vectors in universal $R$-matrix \eqref{imp2} are obtained by Lusztig's automorphisms $T_i$'s \cite{lus}.
For example,
we fix $w_0=s_3s_2s_3s_2s_1s_2s_3s_2s_1$ a reduced decomposition of the longest element $w_0$ of Weyl group $W$ for $B_3$,
then the corresponding sequence of all positive roots of $\mathfrak so _7$ is
$\alpha_{3}, \alpha_{2}+2\alpha_{3}, \alpha_{2}+\alpha_{3}, \alpha_{2}, \alpha_{1}+2\alpha_{2}+2\alpha_{3},
\alpha_{1}+\alpha_{2}+2\alpha_{3}, \alpha_{1}+\alpha_{2}+\alpha_{3}, \alpha_{1}+\alpha_{2}, \alpha_{1}$.
For composition root $\alpha_{2}+\alpha_{3}$,
we obtain root vectors
$E_{\alpha_{2}+\alpha_{3}}=-E_{3}E_{2}+q^{-1}E_{2}E_{3}$,
$F_{\alpha_{2}+\alpha_{3}}=-F_{2}F_{3}+qF_{3}F_{2}$.
Moreover,
by these root vectors and the equality \eqref{rmatrix},
we obtain
$R_{VV}{}^{53}_{71}={-}q^{-1}(q^{\frac{1}{2}}-q^{-\frac{1}{2}})q^{-\frac{1}{4}}$
and $R_{VV}{}^{17}_{35}={-}q(q^{\frac{1}{2}}-q^{-\frac{1}{2}})q^{-\frac{1}{4}}$,
then we find
$(PR_{VV})^{35}_{71}\neq(PR_{VV})^{71}_{35}$.
That is, the matrix $PR_{VV}$ isn't symmetric.
However,
we have the following
\begin{proposition}
Corresponding to the 8-dimensional spin $U_q(\mathfrak{so}_{7})$-module in Figure 1,
the braiding matrix $PR_{VV}$ is symmetrizable, that is, the braiding is
of diagonal type.
\end{proposition}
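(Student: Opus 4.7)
The plan is to combine the multiplicity-free decomposition of $V\otimes V$ with Schur's lemma to force $PR_{VV}$ to be semisimple, and then to invoke Lemma \ref{weight} to transport this semisimplicity into a symmetric form after a diagonal rescaling of the spin basis.

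First, since $\mathfrak{R}\,\Delta(h)=\Delta^{\mathrm{op}}(h)\,\mathfrak{R}$ holds on $U_q(\mathfrak{so}_7)$, the braiding $PR_{VV}=P\circ R_{VV}$ is a $U_q(\mathfrak{so}_7)$-module endomorphism of $V\otimes V$. The $8$-dimensional spin module $V=V(\omega_3)$ is minuscule, and a standard Clebsch--Gordan computation (equivalent to the $\mathrm{Spin}(7)$-isomorphism $V\otimes V\cong\wedge^{\bullet}V(\omega_1)$) yields the multiplicity-free decomposition
\[
V\otimes V\;\cong\;V(0)\oplus V(\omega_1)\oplus V(\omega_2)\oplus V(2\omega_3),\qquad 1+7+21+35=64.
\]
Schur's lemma then forces $PR_{VV}$ to act as a scalar on each of the four pairwise non-isomorphic summands, hence $PR_{VV}$ is semisimple with at most four distinct eigenvalues; this is what is meant by ``$PR_{VV}$ is of diagonal type'' and is precisely what makes its minimal polynomial have simple roots, as required in Section 4.

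Next, to promote semisimplicity to symmetrisability in the spin basis of Figure 1, I would trace which off-diagonal entries of $PR_{VV}$ can be nonzero. Every nonzero entry $(PR_{VV})^{ab}_{ij}$ forces $\mu_a+\mu_b=\mu_i+\mu_j$, and the explicit expression for $\mathfrak{R}$ combined with the minuscule vanishing $E_\beta^{2}=F_\beta^{2}\equiv 0$ shows that such an entry can arise only when $(v_i,v_j)$ and $(v_a,v_b)$ are linked by mutually inverse chains of root actions $E^{\underline r}_{\underline i}$ and $F^{\underline t}_{\underline j}$. By Lemma \ref{weight} this forces $(\mu_i,\mu_j)=(\mu_a,\mu_b)$, so the diagonal factor $q^{(\mu_i,\mu_j)}$ coming from $B_{VV}$ is constant on each (at most four-dimensional) weight block of $V\otimes V$. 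I would then construct the required rescaling $v_i\mapsto d_i v_i$ by induction along the simple-root edges of Figure 1, starting from the lowest-weight vector $v_1$ and propagating the scalars $d_i$ via the explicit powers of $q$ produced by Lusztig's root-vector formulas.

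The delicate step will be the global compatibility of these block-by-block rescalings: while each individual weight block is readily symmetrised, simultaneous symmetrisation requires that any two $q$-paths in Figure 1 between the same pair of basis vectors yield the same normalising factor. This is precisely where Lemma \ref{weight} does its real work, ensuring that no contradictory constraints can arise and that a single coherent tuple $(d_1,\ldots,d_8)$ witnesses the symmetrisability. Once this is in place, $PR_{VV}$ becomes similar, via a diagonal change of basis, to a symmetric matrix of rank $64$ with spectrum $\{\mu_{V(0)},\mu_{V(\omega_1)},\mu_{V(\omega_2)},\mu_{V(2\omega_3)}\}$, completing the proof.
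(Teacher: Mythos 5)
Your first step is fine, and it is in fact a cleaner route to the property that the paper actually uses afterwards: since $\hat R_{VV}=PR_{VV}$ is a $U_q(\mathfrak{so}_7)$-module endomorphism and $V\otimes V\cong V(0)\oplus V(\omega_1)\oplus V(\omega_2)\oplus V(2\omega_3)$ is multiplicity-free, Schur's lemma does give that $\hat R_{VV}$ is diagonalizable with at most four eigenvalues. But the proposition's actual content is the symmetrizability of $PR_{VV}$, and there your argument has a genuine gap. Lemma \ref{weight} only controls the weight factors $q^{(\mu_i,\mu_j)}$ coming from $B_{VV}$ in \eqref{rmatrix}; it says nothing about the coefficients $x^{(\underline r,\underline i)}_{(\underline t,\underline j)}$ of the monomials $E^{\underline r}_{\underline i}\otimes F^{\underline t}_{\underline j}$ in the expansion of the universal $R$-matrix, and it is exactly these coefficients that create the asymmetry: the paper's counterexample $R_{VV}{}^{53}_{71}=-q^{-1}(q^{\frac{1}{2}}-q^{-\frac{1}{2}})q^{-\frac{1}{4}}$ versus $R_{VV}{}^{17}_{35}=-q(q^{\frac{1}{2}}-q^{-\frac{1}{2}})q^{-\frac{1}{4}}$ has equal weight factors on both sides, the discrepancy coming from the Lusztig-form composite root vectors $E_{\alpha_2+\alpha_3},F_{\alpha_2+\alpha_3}$. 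So when you say the "delicate step" of global compatibility of the rescaling $v_i\mapsto d_iv_i$ is "precisely where Lemma \ref{weight} does its real work," you are assigning to that lemma a job it cannot do; the existence of one coherent tuple $(d_1,\dots,d_8)$ solving all the multiplicative constraints $R_{VV}{}^{ij}_{kl}/R_{VV}{}^{lk}_{ji}=$ (ratio of $d$'s) simultaneously is exactly what would have to be proved, and you give no argument for it.

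The paper proceeds differently and does not touch the basis of $V$ at all: it replaces the composite root vectors by new ones (explicit $q$-commutator expressions) that still satisfy the duality normalization \eqref{imp1}, so that \eqref{imp2} remains a valid expression of the universal $R$-matrix, and then checks entry classes (diagonal entries, simple-root contributions, composite-root contributions) to show $R_{VV}{}^{ij}_{kl}=R_{VV}{}^{lk}_{ji}$; the key point is the coefficient symmetry $x^{(\underline r,\underline i)}_{(\underline t,\underline j)}=x^{(\underline t,\underline j)}_{(\underline r,\underline i)}$ of \eqref{sym4}, verified from the explicit new root vectors, with Lemma \ref{weight} supplying only the equality of the weight factors. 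Note also that the subsequent computations (the nonzero entries used for the minimal polynomial and for $R'$ in \eqref{prime}) are carried out with this particular symmetric matrix, so even if your diagonal-conjugation scheme could be completed, producing merely \emph{some} symmetric matrix similar to $PR_{VV}$ would be weaker than what the later sections rely on. To repair your proof you must either actually solve the consistency constraints for the $d_i$ (and then redo the later entry computations in the rescaled basis), or adopt the paper's device of re-choosing the root vectors subject to \eqref{imp1}. Your Schur argument can be kept as a clean independent proof of the "diagonal type" half, which is the part quoted in the minimal-polynomial computation.
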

\begin{proof}
We know that dual cases, obtained by Lusztig's automorphisms $T_i$'s,
is the standard choice of bases satisfying the condition \eqref{imp1} in Lemma \ref{orbit}.
Naturally,
if we can determine another dual base of $U_q^{+}(\mathfrak{b})$ and $U_q^{-}(\mathfrak{b})$
such that they satisfy \eqref{imp1},
namely,
$\langle E_\beta, F_\beta\rangle'=\langle E_i, F_i\rangle'$ if a positive $\beta$ and a simple root $\alpha_i$ belong to the same $W$-orbit,
then we derive another explicit formula of universal $R$-matrix of $U_q(\mathfrak g)$.
In fact,
this is equivalent to make base transformation for vector spaces $U_q^{+}(\mathfrak{b})$ and $U_q^{-}(\mathfrak{b})$.
Specially,
for $U_q(\mathfrak{so}_{7})$,
we choose the following expressions for composition positive root vectors:
\begin{gather*}
E_{\alpha_1+\alpha_2}=-E_2E_1+q^{-1}E_1E_2,\quad
E_{\alpha_2+\alpha_3}=-E_3E_2+q^{-1}E_2E_3,\\
E_{\alpha_2+2\alpha_3}=E_3^{(2)}E_2-q^{-\frac{1}{2}}E_3E_2E_3+q^{-1}E_2E_3^{(2)},
\quad
\mbox{where}~E_3^{(2)}:=\frac{(E_3)^{2}}{[2]_{q_{3}}!},\\
E_{\alpha_1+\alpha_2+\alpha_3}=-E_3E_2E_1+q^{-1}E_2E_3E_1+q^{-1}E_1E_3E_2-q^{-2}E_1E_2E_3,\\
E_{\alpha_1+\alpha_2+2\alpha_3}
=-E_{3}^{(2)}E_{\alpha_1+\alpha_2}-q^{-1}E_{\alpha_1+\alpha_2}E_{3}^{(2)}+q^{-\frac{1}{2}}E_3E_{\alpha_1+\alpha_2}E_3,\\
E_{\alpha_1+2\alpha_2+2\alpha_3}
=-E_2E_{\alpha_2+2\alpha_3}E_1
+q^{-1}E_2E_1E_{\alpha_2+2\alpha_3}+q^{-1}E_{\alpha_2+2\alpha_3}E_1E_2-q^{-2}E_1E_{\alpha_2+2\alpha_3}E_2.\\
\end{gather*}
For negative root vectors:
\begin{gather*}
F_{\alpha_1+\alpha_2}=\frac{1}{2}qF_2F_1-\frac{1}{2}q^{2}F_1F_2,\quad
F_{\alpha_2+\alpha_3}=(q-\frac{1}{2}q^{\frac{3}{2}})F_3F_2-(q^{2}-\frac{1}{2}q^{\frac{5}{2}})F_2F_3,\\
F_{\alpha_2+2\alpha_3}=F_2F_3^{(2)}-q^{\frac{1}{2}}F_3F_2F_3+qF_3^{(2)}F_2,
\quad
\mbox{where}~F_3^{(2)}:=\frac{(F_3)^{2}}{[2]_{q_{3}}!},\\
F_{\alpha_1+\alpha_2+\alpha_3}=-q^{-2}F_3F_2F_1+q^{-1}F_2F_3F_1+q^{-1}F_1F_3F_2-F_1F_2F_3,\\
F_{\alpha_1+\alpha_2+2\alpha_3}
=-F_{\alpha_1+\alpha_2}F_{3}^{(2)}-q^{-1}F_{3}^{(2)}F_{\alpha_1+\alpha_2}+q^{-\frac{1}{2}}F_3F_{\alpha_1+\alpha_2}F_3,\\
F_{\alpha_1+2\alpha_2+2\alpha_3}
=-F_1F_{\alpha_2+2\alpha_3}F_2
+qF_{\alpha_2+2\alpha_3}F_1F_2+qF_2F_1F_{\alpha_2+2\alpha_3}-q^{2}F_2F_{\alpha_2+2\alpha_3}F_1.\\
\end{gather*}
According to
\begin{gather*}
\langle E_kE_l, F_mF_n\rangle'=\delta_{km}\delta_{ln}\frac{1}{q_m-q_m^{-1}}\frac{1}{q_m-q_m^{-1}}+
\delta_{lm}\delta_{kn}q^{-(\alpha_{n},\alpha_l)}\frac{1}{q_m-q_m^{-1}}\frac{1}{q_m-q_m^{-1}},
\\
\left.
\begin{array}{rl}
\langle E_iE_jE_k, F_mF_nF_p\rangle'&=[\delta_{im}\delta_{kn}\delta_{jp}+\delta_{km}\delta_{in}\delta_{jp}q^{-(\alpha_k,\alpha_n)}]
q^{-(\alpha_k,\alpha_p)}\frac{1}{(q_m-q_m^{-1})(q_n-q_n^{-1})(q_j-q_j^{-1})}\\
&+[\delta_{jm}\delta_{kn}\delta_{ip}+\delta_{km}\delta_{jn}\delta_{ip}q^{-(\alpha_k,\alpha_n)}]
q^{-(\alpha_j+\alpha_k,\alpha_p)}\frac{1}{(q_m-q_m^{-1})(q_n-q_n^{-1})(q_i-q_i^{-1})}\\
&+[\delta_{im}\delta_{jn}\delta_{kp}+\delta_{jm}\delta_{in}\delta_{kp}q^{-(\alpha_j,\alpha_n)}]
\frac{1}{(q_m-q_m^{-1})(q_n-q_n^{-1})(q_k-q_k^{-1})},
\end{array}
\right.
\end{gather*}
it can check that
$\langle E_\beta, F_\beta\rangle'=\langle E_i, F_i\rangle'$ for these new root vectors if a positive $\beta$ and a simple root $\alpha_i$ belong to the same $W$-orbit,
i.e.,
they are satisfy the condition \eqref{imp1}.
Starting from this new dual base,
we can check that the corresponding matrix $PR_{VV}$ is symmetric.
In virtue of the actions of spin modules in Figure 1,
and $E_{k}^{2}, F_{k}^{2}$ is zero as operators,
these new root vectors as operators have the following equalities:
\begin{equation}\label{comp1}
\left.
\begin{array}{c}
E_{\alpha_1+\alpha_2}=-E_2E_1+q^{-1}E_1E_2,\quad
E_{\alpha_2+\alpha_3}=-E_3E_2+q^{-1}E_2E_3,\quad
E_{\alpha_2+2\alpha_3}=-q^{-\frac{1}{2}}E_3E_2E_3,
\\
E_{\alpha_1+\alpha_2+\alpha_3}=-E_3E_2E_1+q^{-1}E_2E_3E_1+q^{-1}E_1E_3E_2-q^{-2}E_1E_2E_3,\\
E_{\alpha_1+\alpha_2+2\alpha_3}=q^{-\frac{1}{2}}E_3E_{\alpha_1+\alpha_2}E_3=
-q^{-\frac{1}{2}}E_3E_2E_1E_3+q^{-\frac{3}{2}}E_3E_1E_2E_3,\\
E_{\alpha_1+2\alpha_2+2\alpha_3}
=-q^{-\frac{3}{2}}E_2E_1E_3E_2E_3-q^{-\frac{3}{2}}E_3E_2E_3E_1E_2;\\
F_{\alpha_1+\alpha_2}=\frac{1}{2}qF_2F_1-\frac{1}{2}q^{2}F_1F_2,\quad
F_{\alpha_2+\alpha_3}=(q-\frac{1}{2}q^{\frac{3}{2}})F_3F_2-(q^{2}-\frac{1}{2}q^{\frac{5}{2}})F_2F_3,\quad
\\
F_{\alpha_2+2\alpha_3}=-q^{\frac{1}{2}}F_3F_2F_3,\quad
F_{\alpha_1+\alpha_2+\alpha_3}=-q^{-2}F_3F_2F_1+q^{-1}F_2F_3F_1+q^{-1}F_1F_3F_2-F_1F_2F_3,\\
F_{\alpha_1+\alpha_2+2\alpha_3}
=q^{-\frac{1}{2}}F_3F_{\alpha_1+\alpha_2}F_3=
\frac{1}{2}q^{\frac{1}{2}}F_3F_2F_1F_3-\frac{1}{2}q^{\frac{3}{2}}F_3F_1F_2F_3,\\
F_{\alpha_1+2\alpha_2+2\alpha_3}
=-q^{\frac{3}{2}}F_2F_1F_3F_2F_3-q^{\frac{3}{2}}F_3F_2F_3F_1F_2.
\end{array}
\right.
\end{equation}
By \eqref{rmatrix},
we know that
$$
\left.
\begin{array}{rl}
&B_{VV}\circ(T_V\otimes T_V)(\mathfrak R)(v_i\otimes v_j)\\
=&B_{VV}\circ (T_V\otimes T_V)\left(\sum_
{
[(\underline{r}, \underline{i}),(\underline{t}, \underline{j})]
\in\Omega^{ij}_{kl}
}
x^{(\underline{r},\underline{i})}_{(\underline{t},\underline{j})}
E^{\underline{r}}_{\underline{i}}\otimes F^{\underline{t}}_{\underline{j}}
+others\right)(v_i\otimes v_j)\\
=&q^{(\mu_k, \mu_l)}\sum_
{
[(\underline{r}, \underline{i}),(\underline{t}, \underline{j})]
\in\Omega^{ij}_{kl}
}
x^{(\underline{r},\underline{i})}_{(\underline{t},\underline{j})}(v_k\otimes v_l)+others,
\end{array}
\right.
$$
where coefficients $x^{(\underline{r},\underline{i})}_{(\underline{t},\underline{j})}\in \mathbb{C}[q,q^{-1}]$,
and the set
$\Omega^{ij}_{kl}$ is defined as
$$\Omega^{ij}_{kl}:=\{[(\underline{r}, \underline{i}),(\underline{t}, \underline{j})]|E^{\underline{r}}_{\underline{i}}(v_i)=E_{i_1}^{r_1}E_{i_2}^{r_2}\cdots E_{i_s}^{r_s}(v_i)=v_k,
F^{\underline{t}}_{\underline{j}}(v_j)=F_{j_1}^{t_1}F_{j_2}^{t_2}\cdots F_{j_s}^{t_s}(v_j)=v_l\}.$$
$E^{\underline{r}}_{\underline{i}}, F^{\underline{t}}_{\underline{j}}$ can be divided into the following three cases:

{\bf(1)} Diagonal entries:

Firstly, the diagonal
entries in the matrix $R_{VV}$ satisfy
$R_{VV}{}^{i}_{i}{}^{j}_{j}=R_{VV}{}^{j}_{j}{}^{i}_{i}=q^{(\mu_{i},\mu_{j})}$,
where $ \mu_{i},\mu_{j} $ denote the weight of $ v_{i}, v_{j}, $
respectively. Next we will consider the non-diagonal entries in the
matrix $R_{VV}$.
Since the operators $E_{i}^{k}$'s and $F_{i}^{k}$'s
are $0$ for all $k\geq 2$, we have the equality
$$B_{VV}\circ(T_{V}\otimes
T_{V})(\mathfrak{R})(v_{i}\otimes v_{j})=B_{VV}\circ(T_{V}\otimes
T_{V})\left(1+\sum\limits_{\alpha}E_{\alpha}\otimes
F_{\alpha}\right)(v_{i}\otimes v_{j}).$$

{\bf(2)} The entries related to simple root vectors:

According to the spin
representation, suppose that there exist simple root vectors
$E_{\beta}$ and $F_{\beta}$ such that $ E_{\beta}(v_{i})=v_{k},
F_{\beta}(v_{j})=v_{l}, $ which is equivalent to $
E_{\beta}(v_{l})=v_{j}, F_{\beta}(v_{k})=v_{i}. $ Namely, the entry
$R_{VV}{}^{ij}_{kl}\neq 0$, also is equivalent to
$R_{VV}{}^{lk}_{ji}\neq 0$.
Moreover,
by Lemma \ref{weight},
the entry
$ R_{VV}{}^{ij}_{kl}=
(q_{\beta}-q_{\beta}^{-1})q^{(\mu_{k},\mu_{l})}
=(q_{\beta}-q_{\beta}^{-1})q^{(\mu_{i},\mu_{j})}$.
Correspondingly,
$ R_{VV}{}^{lk}_{ji}=
(q_{\beta}-q_{\beta}^{-1})q^{(\mu_{i},\mu_{j})}, $ we obtain $
R_{VV}{}^{ij}_{kl}=R_{VV}{}^{lk}_{ji}, $ which is equivalent to $
(PR_{VV})^{ji}_{kl}=(PR_{VV})^{kl}_{ji}. $

{\bf(3)} The entries related to quantum composite root vectors:

The expressions of all composite vectors as operators are listed by \eqref{comp1} as above,
which are $q$-commutators of simple root vectors. Thereby, the actions of
these composite root vectors can be deduced by the actions of simple
root vectors.
Suppose that there are $v_i, v_j, v_k, v_l$ such that
$E_{i_1}^{r_1}E_{i_2}^{r_2}\cdots E_{i_s}^{r_s}(v_i)=v_k$,
$F_{j_1}^{t_1}F_{j_2}^{t_2}\cdots F_{j_s}^{t_s}(v_j)=v_l$,
which is equivalent to
$F_{i_s}^{r_s}\cdots F_{i_2}^{r_2}F_{i_1}^{r_1}(v_k)=v_i$,
$E_{j_s}^{t_s}\cdots E_{j_2}^{t_2}E_{j_1}^{t_1}(v_l)=v_j$,
then $(\mu_i, \mu_j)=(\mu_k, \mu_l)$ by Lemma \ref{weight}.
On the other hand,
according to
$$\mathfrak{R}=x^{(\underline{r},\underline{i})}_{(\underline{t},\underline{j})}E_{i_1}^{r_1}E_{i_2}^{r_2}\cdots E_{i_s}^{r_s}\otimes F_{j_1}^{t_1}F_{j_2}^{t_2}\cdots F_{j_s}^{t_s}
+x^{(\underline{t},\underline{j})}_{(\underline{r},\underline{i})}E_{j_s}^{t_s}\cdots E_{j_2}^{t_2}E_{j_1}^{t_1}\otimes F_{i_s}^{r_s}\cdots F_{i_2}^{r_2}F_{i_1}^{r_1}+others,$$
then the corresponding entries are
$$R_{VV}{}^{ij}_{kl}=x^{(\underline{r},\underline{i})}_{(\underline{t},\underline{j})}q^{(\mu_k, \mu_l)},\quad
R_{VV}{}^{lk}_{ji}=x^{(\underline{t},\underline{j})}_{(\underline{r},\underline{i})}q^{(\mu_i, \mu_j)}=x^{(\underline{t},\underline{j})}_{(\underline{r},\underline{i})}q^{(\mu_k, \mu_l)}.$$
Thence,
we obtain the following equivalent relation
\begin{equation}\label{sym4}
R_{VV}{}^{ij}_{kl}=R_{VV}{}^{lk}_{ji} \Longleftrightarrow x^{(\underline{r},\underline{i})}_{(\underline{t},\underline{j})}
=x^{(\underline{t},\underline{j})}_{(\underline{r},\underline{i})}.
\end{equation}
Furthermore,
associated with the expressions \eqref{comp1} of composition root vectors as operators,
it is easy to check that the equality $x^{(\underline{r},\underline{i})}_{(\underline{t},\underline{j})}
=x^{(\underline{t},\underline{j})}_{(\underline{r},\underline{i})}$ in \eqref{sym4} is always true.
Thereby,
we have the equality
$R_{VV}{}^{ij}_{kl}=R_{VV}{}^{lk}_{ji}$,
namely,
$(PR_{VV})^{ji}_{kl}=(PR_{VV})^{kl}_{ji}$.

Let us demonstrate this by an example.
For example,
starting from the composite root $\alpha_1+\alpha_2$,
we know that the corresponding root vectors are
$E_{\alpha_1+\alpha_2}=-E_2E_1+q^{-1}E_1E_2
$
and
$F_{\alpha_1+\alpha_2}=\frac{1}{2}qF_2F_1-\frac{1}{2}q^{2}F_1F_2$ by \eqref{comp1}.
Then we have
\begin{equation}\label{sym5}
\left.
\begin{array}{rl}
&B_{VV}\circ(T_{V}\otimes
T_{V})(\mathfrak{R})(v_{i}\otimes v_{j})\\
=&B_{VV}\circ(T_{V}\otimes
T_{V})\left((q_{\alpha_1+\alpha_2}-q_{\alpha_1+\alpha_2}^{-1})E_{\alpha_1+\alpha_2}\otimes F_{\alpha_1+\alpha_2}\right)(v_{i}\otimes v_{j})\,+\,\textrm{others}\\
=&B_{VV}\circ(q$-$q^{-1})(T_{V}{\otimes}
T_{V})\left((-E_2E_1+q^{-1}E_1E_2){\otimes}(\frac{1}{2}qF_2F_1-\frac{1}{2}q^{2}F_1F_2)\right)(v_{i}{\otimes} v_{j})$+$\textrm{others}\\
=&B_{VV}\circ(q$-$q^{-1})(T_{V}{\otimes}
T_{V})(\frac{E_{2}E_{1}{\otimes}F_{1}F_{2}}{2q^{-2}}$-$\frac{E_{2}E_{1}{\otimes}
F_{2}F_{1}}{2q}$-$\frac{E_{1}E_{2}{\otimes}
F_{1}F_{2}}{2q}$+$\frac{1}{2}E_{1}E_{2}{\otimes} F_{2}F_{1})(v_{i}{\otimes}
v_{j})$+$\textrm{others}.
\end{array}
\right.
\end{equation}
Now,
firstly,
we analyze explicitly the entries obtained by the operator
$T_{V}\otimes T_{V}(E_{2}E_{1}\otimes F_{1}F_{2})$.
If $T_{V}(E_{2}E_{1})(v_i)=v_k$ and
$T_{V}(F_{1}F_{2})(v_j)=v_l$,
then we obtain equivalently
$T_{V}(F_{1}F_{2})(v_k)=v_i$
and
$T_V(E_2E_1)(v_l)=v_j$.
Thus
$R_{VV}{}^{ij}_{kl}=\frac{1}{2}q^{2}(q-q^{-1})q^{(\mu_{k},\mu_{l})}
=\frac{1}{2}q^{2}(q-q^{-1})q^{(\mu_{i},\mu_{j})}=R_{VV}{}^{lk}_{ji}$,
and
$(PR_{VV})^{ji}_{kl}=(PR_{VV})^{kl}_{ji}$.
Secondly,
suppose that there exist $v_i, v_j$ and $v_k, v_l$ such that
$T_{V}(E_{2}E_{1})(v_i)=v_k,
T_{V}(F_{2}F_{1})(v_j)=v_l,$
by the action of spin module,
which is equivalent to that
$
T_{V}(F_{1}F_{2})(v_k)=v_i,
T_{V}(E_{1}E_{2})(v_l)=v_j.
$
That is,
\begin{equation}\label{sym6}
(T_{V}{\otimes}
T_{V})(E_{2}E_{1}{\otimes} F_{2}F_{1})(v_i{\otimes} v_j){=}v_k{\otimes} v_l
\Leftrightarrow
(T_{V}{\otimes}
T_{V})(E_{1}E_{2}{\otimes} F_{1}F_{2})(v_l{\otimes} v_k){=}v_j{\otimes} v_i.
\end{equation}
Moreover,
by \eqref{sym5},
we know that the coefficients of $E_{2}E_{1}{\otimes} F_{2}F_{1}$ and $E_{1}E_{2}{\otimes} F_{1}F_{2}$ are equal,
and both are $\frac{1}{2}q$ (i.e. satisfy the condition $x^{(\underline{r},\underline{i})}_{(\underline{t},\underline{j})}
=x^{(\underline{t},\underline{j})}_{(\underline{r},\underline{i})}$ in \eqref{sym4}).
So the corresponding entries have the equality
$R_{VV}{}^{ij}_{kl}=\frac{1}{2}q(q-q^{-1})q^{(\mu_{k},\mu_{l})}
=\frac{1}{2}q(q-q^{-1})q^{(\mu_{i},\mu_{j})}=R_{VV}{}^{lk}_{ji}.$
For the situation of operator $\frac{1}{2}E_{1}E_{2}\otimes F_{2}F_{1}$ can be analyzed in a similar way.

The proof is complete.
\end{proof}

According to Lemma \ref{pair}, we need to seek the pair $(R, R')$ to
determine the dually-paired braided groups $V(R',R)$ and
$V^{\vee}(R^{\prime},R_{21}^{-1})$. To this end, we have to figure
out the minimal polynomial of the braiding $PR_{VV}$ in advance.
Notice the size of the matrix $PR_{VV}$ is $64\times 64$ due to
$\dim V=8$, it is not easy to obtain its minimal polynomial
directly. In the course of demonstration of the following
Proposition, we will give an ingenious method to capture the minimal
polynomial by taking advantage of nice features of the
representation involved. For simplicity, write $\hat{R}_{VV}$ for
$PR_{VV}$.
\begin{proposition}
Associated to the spin $U_q(B_3)$-module $V$, the braiding matrix $\hat{R}_{VV}$ obeys the minimal
polynomial equation
$$(\hat{R}_{VV}+q^{-\frac{1}{4}}I)(\hat{R}_{VV}-q^{\frac{3}{4}}I)(\hat{R}_{VV}+q^{-\frac{9}{4}}I)(\hat{R}_{VV}+q^{-\frac{21}{4}}I)=0.$$
\end{proposition}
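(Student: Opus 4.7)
My plan is to avoid diagonalizing the $64\times 64$ matrix $\hat R_{VV}$ directly and instead exploit $U_q(B_3)$-representation theory together with the fact that $\hat R_{VV}$ is a morphism in the relevant braided category. Because $V=L(\lambda_3)$ is a minuscule $U_q(B_3)$-module, the quantum Weyl character formula reduces the decomposition of $V\otimes V$ to the classical one:
\begin{equation*}
V\otimes V \;\cong\; L(2\lambda_3)\oplus L(\lambda_2)\oplus L(\lambda_1)\oplus L(0),
\end{equation*}
with dimensions $35+21+7+1=64$. Since $\hat R_{VV}=P\circ R_{VV}$ intertwines the $U_q(B_3)$-action on $V\otimes V$, Schur's lemma forces it to act as a scalar on each of the four summands. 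Hence $\hat R_{VV}$ has at most four distinct eigenvalues, and its minimal polynomial divides the degree-four product $\prod_\mu(\hat R_{VV}-c_\mu I)$.

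Next I would compute the scalar $c_\mu$ on each summand by evaluating $\hat R_{VV}$ on a highest-weight vector $w_\mu\in L(\mu)\subset V\otimes V$. On such a vector the positive-root part of the universal $R$-matrix \eqref{imp2} collapses dramatically---most $E_\beta\otimes F_\beta$ terms are annihilated by weight considerations combined with $w_\mu$ being killed by positive simple operators---so only the Cartan-diagonal factor $B_{VV}$ contributes non-trivially. The standard outcome is the Casimir formula
\begin{equation*}
c_\mu \;=\; \epsilon_\mu\,q^{\tfrac12(\mu,\mu+2\rho)\,-\,(\lambda_3,\lambda_3+2\rho)},\qquad \epsilon_\mu\in\{\pm 1\}.
\end{equation*}
With $\rho=\tfrac{5}{2}\varepsilon_1+\tfrac{3}{2}\varepsilon_2+\tfrac{1}{2}\varepsilon_3$ and $(\lambda_3,\lambda_3+2\rho)=21/4$, the Casimir values $(\mu,\mu+2\rho)=0,6,10,12$ for $\mu=0,\lambda_1,\lambda_2,2\lambda_3$ produce exactly the exponents $-21/4,-9/4,-1/4,3/4$ appearing in the stated polynomial.

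The sign $\epsilon_\mu$ is determined by whether $L(\mu)$ lies in $S^2V$ or $\Lambda^2V$, read off from the classical $q\to 1$ limit in which $\hat R_{VV}$ degenerates to the ordinary flip. The unique partition of the summand dimensions $\{1,7,21,35\}$ compatible with $\dim S^2V=36$ and $\dim\Lambda^2V=28$ is $\{1,35\}$ versus $\{7,21\}$. The symmetric vector $v_1\otimes v_1$ is a highest-weight vector for $L(2\lambda_3)$, placing $L(2\lambda_3)\subset S^2V$ and forcing $L(\lambda_2),L(\lambda_1)\subset\Lambda^2V$. Assembling the four scalars with their signs then yields the claimed factorization.

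The main obstacle is the sign analysis on the trivial summand $L(0)$, for which the classical parity of the invariant bilinear form on the spin representation of $\mathfrak{so}_7$ must be pinned down (or, equivalently, an explicit $q$-singlet in $V\otimes V$ constructed and $\hat R_{VV}$ applied to it). Once the four eigenvalues $c_\mu$ are in hand, their pairwise distinctness is immediate by inspection of the exponents, so the degree-four polynomial obtained is indeed the minimal one.
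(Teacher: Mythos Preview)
Your approach is genuinely different from the paper's. The paper never invokes the Casimir/ribbon eigenvalue formula; instead it treats the four eigenvalues as unknowns, introduces their elementary symmetric functions $\triangle_1,\ldots,\triangle_4$, and extracts four equations in the $\triangle_i$ by computing individual entries of the matrix $\prod_{i}(\hat R_{VV}-x_iI)$ at carefully chosen positions $(88),(12),(58)$ where the corresponding rows of $\hat R_{VV}$ have only two to four nonzero entries. Your method gets the four $q$-exponents conceptually, for free, and would scale to other modules; the paper's method is pedestrian entry-chasing but requires no external input beyond a handful of matrix coefficients.

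There is, however, a real gap precisely where you flag it. If you carry out the classical-parity check on $L(0)$ that you propose, you will find that the spin representation of $\mathfrak{so}_7$ is of real (orthogonal) type --- equivalently, your own dimension-partition argument already forces $L(0)\subset S^2V$ since $\{1,35\}$ is the only subset summing to $36$. That yields $\epsilon_0=+1$ and hence eigenvalue $+q^{-21/4}$, \emph{not} the $-q^{-21/4}$ appearing in the stated minimal polynomial. (A quick consistency check: with the paper's signs the product of eigenvalues weighted by multiplicities is $(-1)^{21+7+1}q^{0}=-1$, whereas $\det\hat R_{VV}=\det P\cdot\det R_{VV}=(+1)(+1)=1$; with your signs the product is $+1$.) So either the statement carries a sign error in its last factor, or the paper's particular $\hat R_{VV}$ --- note the nonstandard choice of root-vector bases made in Proposition~4.1 to render the braiding symmetric --- does not obey the naive classical-limit sign rule. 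In either case your argument as written does not prove the formula as stated: to close it you would have to compute $\hat R_{VV}$ directly on an explicit $q$-singlet in the weight-zero subspace rather than rely on the classical parity, which is exactly the kind of hands-on check the paper's approach performs and yours was designed to avoid.

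A small side remark: in the paper's basis $v_1=(-,-,-)$ is the lowest-weight vector, so the symmetric highest-weight vector for $L(2\lambda_3)$ is $v_8\otimes v_8$, not $v_1\otimes v_1$; this does not affect your argument.
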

\begin{proof}
Note that
$V^{\otimes 2}=V_{1}\oplus V_{2}\oplus
V_{3}\oplus V_{4}$ \cite{ful}, where $V_{i}$'s are the irreducible
submodules with highest weights
$\varepsilon_{1}+\varepsilon_{2}+\varepsilon_{3},
\varepsilon_{1}+\varepsilon_{2}, \varepsilon_{1}, 0$, respectively.
This means that $\hat R_{VV}$ has $4$ different eigenvalues (since
the braiding $\hat R_{VV}$ as module homomorphism is of diagonal
type, by Proposition 4.1), denoted by $x_{1},x_{2},x_{3},x_{4}$.
Thus $f(t)=(t-x_{1})(t-x_{2})(t-x_{3})(t-x_{4})$ is the minimal
polynomial of $\hat R_{VV}$, so $\mathcal{M}=f(\hat{R}_{VV})=0$.

Let $\triangle_{1}, \triangle_{2}, \triangle_{3}, \triangle_{4} $
denote the elementary symmetric polynomials:
$x_{1}+x_{2}+x_{3}+x_{4},
x_{1}x_{2}+x_{1}x_{3}+x_{2}x_{3}+(x_{1}+x_{2}+x_{3})x_{4},
x_{1}x_{2}x_{3}+x_{1}x_{2}x_{4}+x_{1}x_{3}x_{4}+x_{2}x_{3}x_{4},
x_{1}x_{2}x_{3}x_{4}$, respectively.

Relative to the order of the fixed basis $v_i\otimes v_j$ $(1\le i,
j\le 8)$, we will consider some special rows in matrix
$\hat{R}_{VV}-x_{i}I$.
Obviously,
there is only one nonzero entry in the row (88),
that is,
$R_{VV}{}^{88}_{88}=q^{\frac{3}{4}}$.
So we have
\begin{equation}\label{f1}
\mathcal{M}^{88}_{88}
=(q^{\frac{3}{4}}-x_1)(q^{\frac{3}{4}}-x_2)(q^{\frac{3}{4}}-x_3)(q^{\frac{3}{4}}-x_4).
\end{equation}
From it,
then we obtain $x_i=q^{\frac{3}{4}}$ for a certain $x_i$.
Next,
we consider the row (12).
Only two nonzero entries occur in row (12),
which locate at columns (12) and (21), namely,
$(\hat{R}_{VV}-x_{i}I)~^{1}_{1}{}^{2}_{2}=-x_{i}, $
$(\hat{R}_{VV}-x_{i}I)~^{1}_{2}{}^{2}_{1}=q^{\frac{1}{4}}$. Now we
turn to seek those nonzero entries appearing in row $(12)$ of matrix
$(\hat{R}_{VV}-x_{i}I)(\hat{R}_{VV}-x_{j}I)$. For any column $(mn)$,
we have
$$
[(\hat{R}_{VV}-x_{i}I)(\hat{R}_{VV}-x_{j}I)]^{1}_{m}{}^{2}_{n}
=(\hat{R}_{VV}-x_{i}I)^{1}_{1}{}^{2}_{2}(\hat{R}_{VV}-x_{j}I)^{1}_{m}{}^{2}_{n}
+(\hat{R}_{VV}-x_{i}I)^{1}_{2}{}^{2}_{1}(\hat{R}_{VV}-x_{j}I)^{2}_{m}{}^{1}_{n}.
$$
From the above equality, we need to further find those nonzero
entries in row $(21)$ of matrix $\hat{R}_{VV}-x_{j}I$, listed as
$(\hat{R}_{VV}-x_{j}I)~^{2}_{1}{}^{1}_{2}=q^{\frac{1}{4}},
(\hat{R}_{VV}-x_{j}I)~^{2}_{2}{}^{1}_{1}=q^{\frac{1}{4}}(q^{\frac{1}{2}}-q^{-\frac{1}{2}})-x_{j}.
$ Hence,
\begin{gather*}
\left.
\begin{array}{rl}
[(\hat{R}_{VV}-x_{i}I)(\hat{R}_{VV}-x_{j}I)]^{1}_{1}{}^{2}_{2}
&=(\hat{R}_{VV}-x_{i}I)^{1}_{1}{}^{2}_{2}(\hat{R}_{VV}-x_{j}I)^{1}_{1}{}^{2}_{2}
+(\hat{R}_{VV}-x_{i}I)^{1}_{2}{}^{2}_{1}(\hat{R}_{VV}-x_{j}I)^{2}_{1}{}^{1}_{2}\\
&=x_1x_2+q^{\frac{1}{2}},
\end{array}
\right.
\\
\left.
\begin{array}{rl}
[(\hat{R}_{VV}-x_{i}I)(\hat{R}_{VV}-x_{j}I)]^{1}_{2}{}^{2}_{1}
&=(\hat{R}_{VV}-x_{i}I)^{1}_{1}{}^{2}_{2}(\hat{R}_{VV}-x_{j}I)^{1}_{2}{}^{2}_{1}
+(\hat{R}_{VV}-x_{i}I)^{1}_{2}{}^{2}_{1}(\hat{R}_{VV}-x_{j}I)^{2}_{2}{}^{1}_{1}\\
&=-q^{\frac{1}{4}}(x_1+x_2)+(q-1).
\end{array}
\right.
\end{gather*}
Namely, those nonzero entries still locate at columns (12) and (21)
in row $(12)$ of matrix
$(\hat{R}_{VV}-x_{i}I)(\hat{R}_{VV}-x_{j}I)$. According to the same
analysis, the nonzero entries in row $(12)$ of
$(\hat{R}_{VV}-x_{1}I)(\hat{R}_{VV}-x_{2}I)(\hat{R}_{VV}-x_{3}I)$
still lie at columns $(12)$ and $(21)$, given by
\begin{gather*}
\left.
\begin{array}{rl}
&[(\hat{R}_{VV}-x_{1}I)(\hat{R}_{VV}-x_{2}I)(\hat{R}_{VV}-x_{3}I)]^{1}_{1}{}^{2}_{2}\\
=&[(\hat{R}_{VV}-x_{1}I)(\hat{R}_{VV}-x_{2}I)]^{1}_{1}{}^{2}_{2}(\hat{R}_{VV}-x_{3}I)^{1}_{1}{}^{2}_{2}
+[(\hat{R}_{VV}-x_{1}I)(\hat{R}_{VV}-x_{2}I)]^{1}_{2}{}^{2}_{1}(\hat{R}_{VV}-x_{3}I)^{2}_{1}{}^{1}_{2}\\
=&-x_1x_2x_3-q^{\frac{1}{2}}(x_1+x_2+x_3)+q^{\frac{1}{4}}(q-1),
\end{array}
\right.
\\
\left.
\begin{array}{rl}
&[(\hat{R}_{VV}-x_{1}I)(\hat{R}_{VV}-x_{2}I)(\hat{R}_{VV}-x_{3}I)]^{1}_{2}{}^{2}_{1}\\
=&[(\hat{R}_{VV}-x_{1}I)(\hat{R}_{VV}-x_{2}I)]^{1}_{1}{}^{2}_{2}(\hat{R}_{VV}-x_{3}I)^{1}_{2}{}^{2}_{1}
+[(\hat{R}_{VV}-x_{1}I)(\hat{R}_{VV}-x_{2}I)]^{1}_{2}{}^{2}_{1}(\hat{R}_{VV}-x_{3}I)^{2}_{2}{}^{1}_{1}\\
=&q^{\frac{1}{4}}(x_1x_2+x_1x_3+x_2x_3)-(q-1)(x_1+x_2+x_3)+q^{\frac{1}{4}}(q-q^{-1})(q^{\frac{1}{2}}-q^{-\frac{1}{2}})+q^{\frac{3}{4}}.
\end{array}
\right.
\end{gather*}
Thereby, by the same reasoning, we get the entries lying at row
$(12)$ and columns (12), (21) of matrix $\mathcal
M=(\hat{R}_{VV}-x_{1}I)(\hat{R}_{VV}-x_{2}I)(\hat{R}_{VV}-x_{3}I)(\hat{R}_{VV}-x_{4}I)$
as follows:
\begin{gather}
\left.
\begin{array}{rl}
\mathcal M^{12}_{12}
=&[(\hat{R}_{VV}-x_{1}I)(\hat{R}_{VV}-x_{2}I)(\hat{R}_{VV}-x_{3}I)]^{1}_{1}{}^{2}_{2}(\hat{R}_{VV}-x_{4}I)^{1}_{1}{}^{2}_{2}\\
&+[(\hat{R}_{VV}-x_{1}I)(\hat{R}_{VV}-x_{2}I)(\hat{R}_{VV}-x_{3}I)]^{1}_{2}{}^{2}_{1}(\hat{R}_{VV}-x_{4}I)^{2}_{1}{}^{1}_{2},\\
=&\triangle_{4}+q^{\frac{1}{2}}\triangle_{2}-q^{\frac{1}{4}}(q-1)\triangle_{1}+(q-1)^{2}+q,
\end{array}
\right.\label{f2}
\\
\left.
\begin{array}{rl}
\mathcal M^{12}_{21}
=&[(\hat{R}_{VV}-x_{1}I)(\hat{R}_{VV}-x_{2}I)(\hat{R}_{VV}-x_{3}I)]^{1}_{1}{}^{2}_{2}(\hat{R}_{VV}-x_{4}I)^{1}_{2}{}^{2}_{1}\\
&+[(\hat{R}_{VV}-x_{1}I)(\hat{R}_{VV}-x_{2}I)(\hat{R}_{VV}-x_{3}I)]^{1}_{2}{}^{2}_{1}(\hat{R}_{VV}-x_{4}I)^{2}_{2}{}^{1}_{1},\\
=&{-}q^{\frac{1}{4}}\triangle_{3}{+}(q{-}1)\triangle_{2}
{-}[q^{\frac{1}{4}}(q{-}1)(q^{\frac{1}{2}}{-}q^{{-}\frac{1}{2}}){+}
q^{\frac{3}{4}}]\triangle_{1}{+}(q-1)(q^{\frac{3}{2}}+q^{-\frac{1}{2}}).
\end{array}
\right.
\label{f3}
\end{gather}
Similarly,
Associated with nonzero entries in rows (58), (67), (76), (85):
\begin{gather*}
(\hat{R}_{VV}-x_{i})^{58}_{58}=-x_{i},\quad
(\hat{R}_{VV}-x_{i})^{58}_{85}=q^{-\frac{1}{4}}=(\hat{R}_{VV}-x_{i})^{85}_{58},\quad
(\hat{R}_{VV}-x_{i})^{67}_{67}=-x_i,\\
(\hat{R}_{VV}-x_{i})^{67}_{76}=q^{-\frac{1}{4}}=(\hat{R}_{VV}-x_{i})^{76}_{67},\quad
(\hat{R}_{VV}-x_{i})^{67}_{85}=q^{-\frac{1}{4}}(q^{\frac{1}{2}}-q^{-\frac{1}{2}})=(\hat{R}_{VV}-x_{i})^{85}_{67},\\
(\hat{R}_{VV}-x_{i})^{76}_{76}=q^{-\frac{1}{4}}(q-q^{-1})-x_i,\quad
(\hat{R}_{VV}-x_{i})^{76}_{85}=-q^{-\frac{5}{4}}(q^{\frac{1}{2}}-q^{-\frac{1}{2}})=(\hat{R}_{VV}-x_{i})^{85}_{76},\\
(\hat{R}_{VV}-x_{i})^{85}_{85}=q^{-\frac{1}{4}}(q-1)(1+q^{-2})-x_i,
\end{gather*}
we obtain
\begin{equation}\label{f4}
\mathcal{M}^{58}_{58}=\triangle_4{+}q^{{-}\frac{1}{2}}\triangle_2{-}q^{{-}\frac{3}{4}}(q{-}1)(1{+}q^{{-}2})\triangle_1{+}q^{{-}1}
{+}(1-q^{-1})^{2}(1+q^{-2})(1+q+q^{-1}).
\end{equation}
Then according to \eqref{f1}--\eqref{f4},
and $\mathcal{M}=0$,
we obtain the following equations:
$$
\left\{
\begin{array}{l}
\triangle_{4}-q^{\frac{3}{4}}\triangle_{3}+q^{\frac{3}{2}}\triangle_{2}-q^{\frac{9}{4}}\triangle_{1}+q^{3}=0,\\
\triangle_{4}+q^{\frac{1}{2}}\triangle_{2}-q^{\frac{1}{4}}(q-1)\triangle_{1}+(q-1)^{2}+q=0,\\
{-}q^{\frac{1}{4}}\triangle_{3}{+}(q{-}1)\triangle_{2}
{-}[q^{\frac{1}{4}}(q{-}1)(q^{\frac{1}{2}}{-}q^{{-}\frac{1}{2}}){+}
q^{\frac{3}{4}}]\triangle_{1}{+}(q-1)(q^{\frac{3}{2}}+q^{-\frac{1}{2}})=0,\\
\triangle_4+q^{-\frac{1}{2}}\triangle_2{-}q^{-\frac{3}{4}}(q-1)(1+q^{-2})\triangle_1{+}q^{{-}1}
{+}(1-q^{-1})^{2}(1+q^{-2})(1+q+q^{-1})=0.\\
\end{array}
\right.
$$
Solving the system of equations, we get $ \left\{
\begin{array}{l}
\triangle_{1}=q^{\frac{3}{4}}-q^{-\frac{1}{4}}-q^{-\frac{9}{4}}-q^{-\frac{21}{4}},\\
\triangle_{2}=q^{-\frac{5}{2}}+q^{-\frac{11}{2}}+q^{-\frac{15}{2}}-q^{\frac{1}{2}}-q^{-\frac{3}{2}}-q^{-\frac{9}{2}},\\
\triangle_{3}=q^{-\frac{7}{4}}+q^{-\frac{19}{4}}+q^{-\frac{27}{4}}-q^{-\frac{31}{4}},\\
\triangle_{4}=-q^{-7},
\end{array}
\right.
$

and then
$
x_{1}=q^{\frac{3}{4}},
x_{2}=-q^{-\frac{1}{4}},
x_{3}=-q^{-\frac{9}{4}},
x_{4}=-q^{-\frac{21}{4}}.
$
So the proof is complete.
\end{proof}

Having the minimal polynomial equation of the braiding $\hat R_{VV}$
in hand, it is necessary to consider at which eigenvalue of $\hat
R_{VV}$ we should make its normalization in order to construct the
quantum group of higher-one rank we expect? Corresponding to the
$n$-dimensional representation $T_{V}$ with basis $v_{i}$ we choose,
we know that the new simple root vectors $E_\alpha$, $F_\alpha$ and
the group-like element $K_\alpha$ are always identified respectively
as $e^{n}, f_{n}, (m^{+})^{n}_{n}c^{-1}$ from those
double-bosonization constructions in \cite{HH}. Denote by $\mu_{n}$
the weight of $v_{n}$, we obtain
$R_{VV}{}^{nn}_{nn}=q^{(\mu_{n},\mu_{n})}$. In order to construct
the larger quantum group we want, the most important cross relation
is
$e^{n}((m^{+})^{n}_{n}c^{-1})=R^{nn}_{nn}(((m^{+})^{n}_{n}c^{-1}))e^{n}$
because it gives us the length of the new additional simple root
$\alpha$, namely,
$e^{n}((m^{+})^{n}_{n}c^{-1})=q^{(\alpha,\alpha)}(((m^{+})^{n}_{n}c^{-1}))e^{n}$.
From these, we know at which one of eigenvalues we should take its
normalization in virtue of $R_{VV}{}^{nn}_{nn}$ and
$q^{(\alpha,\alpha)}$ such that the value of $R^{nn}_{nn}$ satisfy
our requirement.
Thus we obtain the following
\begin{proposition}\label{constant}
Starting from an appropriate $U_{q}(\mathfrak{g}')$-module with highest weight $\mu$
for a Lie subalgebra $\mathfrak g'\subset\mathfrak g$ of corank $1$,
if we can construct inductively quantum group $U_q(\mathfrak g)$ from $U_{q}(\mathfrak{g}')$ by double-bosonization procedure,
then the quantum group normalization constant is
$\lambda=\frac{q^{(\alpha,\alpha)}}{q^{(\mu,\mu)}},$
where $\alpha$ is the additional simple root for $\mathfrak g$.
\end{proposition}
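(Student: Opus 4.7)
The plan is to extract $\lambda$ by matching a single cross-commutation relation in the enlarged quantum group $U=U(V^{\vee}(R',R_{21}^{-1}),\widetilde{U_q^{\rm ext}(\mathfrak g')},V(R',R))$ of Theorem \ref{cor1} against the expected Levi-type relation of $U_q(\mathfrak g)$. Following the identification established inductively for the classical cases in \cite{HH}, the new simple-root data are read off as $E_\alpha=e^n$, $F_\alpha=f_n$, $K_\alpha=(m^+)^n_n c^{-1}$, where $n=\dim V$ and $v_n$ is the chosen highest weight vector of $V$, of weight $\mu$.

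The key $R$-matrix input is the evaluation of the diagonal entry $R_{VV}{}^{nn}_{nn}$. Since $v_n$ is of highest weight, $E_\beta v_n=0$ for every positive root $\beta$, so when the series
\[\mathfrak R=\sum\prod_{j}\tfrac{(1-q_{\beta_j}^{-2})^{r_j}}{[r_j]_{q_{\beta_j}}!}\,q_{\beta_j}^{r_j(r_j+1)/2}\,E_{\beta_j}^{r_j}\otimes F_{\beta_j}^{r_j}\]
is applied in order to produce a $v_n\otimes v_n$ component, any factor $F_{\beta_j}^{r_j}$ on the second tensor slot with $r_j\ge 1$ would strictly lower the target weight below $\mu$, forcing $r_j=0$ for every $j$; only the identity term survives, and the subsequent twist by $B_{VV}$ contributes the factor $q^{(\mu,\mu)}$. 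The same weight analysis rules out the other candidate indices, so
\[R_{VV}{}^{nn}_{nn}=q^{(\mu,\mu)},\qquad R_{VV}{}^{nn}_{ab}=0\quad\text{for }(a,b)\ne(n,n).\]

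Feeding this into the cross relation $e^n(m^+)^n_n=R_{VV}{}^{nn}_{ab}(m^+)^a_n e^b$ from Theorem \ref{cor1} collapses the sum to a single term. Combining with $e^n c=\lambda\, c\, e^n$ and $[c,m^\pm]=0$, a short manipulation yields
\[e^n K_\alpha\;=\;q^{\pm(\mu,\mu)}\,\lambda^{\pm 1}\,K_\alpha\,e^n,\]
where the signs are dictated by whether we identify $K_\alpha$ with $(m^+)^n_n c^{-1}$ or its inverse. Comparing with the defining Levi relation $K_\alpha E_\alpha K_\alpha^{-1}=q^{(\alpha,\alpha)}E_\alpha$ of $U_q(\mathfrak g)$ forces $\lambda=q^{(\alpha,\alpha)}/q^{(\mu,\mu)}$, as asserted.

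The only delicate step I foresee is not the computation itself but the bookkeeping of conventions: exactly which of $(m^+)^n_n c^{-1}$, $c(m^-)^n_n$, or their inverses is to be identified with $K_\alpha$, and correspondingly in which direction the cross relation of Theorem \ref{cor1} should be read. These choices have to be reconciled with the coproduct $\Delta K_\alpha=K_\alpha\otimes K_\alpha$ (to be matched against the coproducts of the candidate expressions supplied by Theorem \ref{cor1}), with the pairing $\langle c,g\rangle=\lambda$ of Lemma \ref{pair}, and with the explicit type $ABCD$ constructions of \cite{HH}. Once these are fixed, every other ingredient—the cross relation of Theorem \ref{cor1}, the highest-weight vanishing of $R_{VV}{}^{nn}_{ab}$, and the manipulation with $c$—is mechanical.
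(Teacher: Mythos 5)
Your route is the same as the paper's: identify $E_\alpha=e^{n}$, $F_\alpha=f_{n}$, $K_\alpha=(m^{+})^{n}_{n}c^{-1}$ as in \cite{HH}, note that $R_{VV}{}^{nn}_{nn}=q^{(\mu,\mu)}$ with the rest of that row vanishing (your highest-weight argument is in fact a cleaner justification than the paper gives, which essentially asserts this and verifies it in the examples, e.g.\ row $(88)$ for $F_4$), and then feed the cross relations $e^{n}(m^{+})^{n}_{n}=R_{VV}{}^{nn}_{nn}(m^{+})^{n}_{n}e^{n}$ and $e^{n}c=\lambda\,c\,e^{n}$ of Theorem \ref{cor1} into the required $K_\alpha$--$E_\alpha$ relation to solve for $\lambda$. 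Up to that point everything is fine.

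The gap is precisely in the deferred ``bookkeeping of conventions'', which as written does not yield the stated formula. From Theorem \ref{cor1} one gets $e^{n}K_\alpha=\bigl(\lambda^{-1}q^{(\mu,\mu)}\bigr)^{\pm1}K_\alpha e^{n}$, where the \emph{single} sign flips simultaneously for $\lambda$ and for $q^{(\mu,\mu)}$ when $K_\alpha$ is replaced by its inverse; the two exponents cannot be chosen independently, contrary to what your ``$q^{\pm(\mu,\mu)}\lambda^{\pm1}$'' suggests. Matching against $E_\alpha K_\alpha=q^{\epsilon(\alpha,\alpha)}K_\alpha E_\alpha$ (with $\epsilon=\pm1$ according to the chosen Levi convention) can therefore only produce $\lambda=q^{(\mu,\mu)\mp(\alpha,\alpha)}$, never $q^{(\alpha,\alpha)-(\mu,\mu)}$. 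The value the paper actually uses is $\lambda=q^{(\mu,\mu)-(\alpha,\alpha)}$: for $F_4$, $q^{3/4-1}=q^{-1/4}$, and for $G_2$, $q^{9/2-6}=q^{-3/2}$, where $q^{(\alpha,\alpha)}$ is fixed by the derived relations $E_4K_4=qK_4E_4$ and $E_2K_2=q^{6}K_2E_2$, respectively. So the honest outcome of the computation you (and the paper) set up is $\lambda=q^{(\mu,\mu)}/q^{(\alpha,\alpha)}$, the reciprocal of the displayed statement; the printed formula appears to have numerator and denominator interchanged, and the paper's own inline use of it for $G_2$ only lands on the right value by an accidental cancellation of normalizations. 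You should either carry out the sign bookkeeping explicitly and state the formula it actually yields, or specify the normalization of $(\,,\,)$ under which the printed version would be recovered; asserting that the comparison ``forces'' $\lambda=q^{(\alpha,\alpha)}/q^{(\mu,\mu)}$ is not supported by the relations you invoked.
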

By Proposition \ref{constant},
we choose the normalization constant $\lambda=q^{-\frac{1}{4}}$ for the construction of $U_q(F_4)$ from this spin representation.
Then setting
\begin{equation}\label{prime}
R=q^{\frac{1}{4}}R_{VV},\qquad
R^{\prime}=R(\hat{R})^{2}+(q^{-2}-q+q^{-5})R\hat{R}+(q^{-7}-q^{-4}-q^{-1})R-(q^{-6}-1)P,
\end{equation}
we obtain $(PR+I)(PR^{\prime}-I)=0$.
The entries in $m^{\pm}$-matrices are given by the following Lemma.
\begin{lemma}\label{lemmf4}
Corresponding to the $8$-dimensional spin representation of $U_{h}(B_{3})$,
the entries in the matrices $m^{\pm}$ we need are listed as follows.
\begin{gather*}
(m^{+})^{3}_{5}=-(q-q^{-1})E_{1}K_{1}^{-\frac{1}{2}}K_{3}^{\frac{1}{2}}, \quad
(m^{+})^{5}_{5}=K_{1}^{-\frac{1}{2}}K_{3}^{\frac{1}{2}},
\\
(m^{+})^{6}_{7}=-(q-q^{-1})E_{2}K_{1}^{-\frac{1}{2}}K_{2}^{-1}K_{3}^{-\frac{1}{2}},\quad
(m^{+})^{7}_{7}=K_{1}^{-\frac{1}{2}}K_{2}^{-1}K_{3}^{-\frac{1}{2}},
\\
(m^{+})^{7}_{8}=-(q^{\frac{1}{2}}-q^{-\frac{1}{2}})E_{3}K_{1}^{-\frac{1}{2}}K_{2}^{-1}K_{3}^{-\frac{3}{2}},\quad
(m^{+})^{8}_{8}=K_{1}^{-\frac{1}{2}}K_{2}^{-1}K_{3}^{-\frac{3}{2}}.
\\
(m^{-})^{5}_{3}=q(q-q^{-1})K_{1}^{\frac{1}{2}}K_{3}^{-\frac{1}{2}}F_{1},\quad
(m^{-})^{5}_{5}=K_{1}^{\frac{1}{2}}K_{3}^{-\frac{1}{2}},
\\
(m^{-})^{7}_{6}=q(q-q^{-1})K_{1}^{\frac{1}{2}}K_{2}K_{3}^{\frac{1}{2}}F_{2},\quad
(m^{-})^{7}_{7}=K_{1}^{\frac{1}{2}}K_{2}K_{3}^{\frac{1}{2}},
\\
(m^{-})^{8}_{7}=q(q^{\frac{1}{2}}-q^{-\frac{1}{2}})K_{1}^{\frac{1}{2}}K_{2}K_{3}^{\frac{3}{2}}F_{3},\quad
(m^{-})^{8}_{8}=K_{1}^{\frac{1}{2}}K_{2}K_{3}^{\frac{3}{2}}.
\end{gather*}
\end{lemma}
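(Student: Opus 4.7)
The plan is to apply Lemma \ref{m+}, which reduces the problem to computing the $L$-functionals $l_{ij}^{\pm}$ of $U_h(B_3)$ associated with the $8$-dimensional spin representation $T_V$ and then reading off the $m^{\pm}$-entries through the antipode. The $L$-functionals are extracted from the identities
\begin{gather*}
(\textrm{id}\otimes T_V)(\mathcal{R})(1\otimes v_j) = \sum_i l^+_{ij}\otimes v_i,\\
(T_V\otimes \textrm{id})(\mathcal{R}^{-1})(v_j\otimes 1) = \sum_i v_i\otimes l^-_{ij},
\end{gather*}
using the explicit formula \eqref{imp2} for $\mathcal{R}$.

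Two structural features of $V$ make the computation tractable. First, the spin module is minuscule, so the basis $\{v_i\}$ from Figure $1$ is a weight basis with one-dimensional weight spaces, and each simple root vector moves any basis vector either to another basis vector (with a $q$-scalar) or to zero. Second, $E_i^2 = F_i^2 = 0$ on every basis vector, so after evaluating the second tensor factor of $\mathcal{R}$ on $v_j$, every $q$-exponential $\exp_{q_\beta}\bigl((1-q_\beta^{-2})(E_\beta\otimes F_\beta)\bigr)$ truncates to its first-order term $1 + (1-q_\beta^{-2})E_\beta\otimes T_V(F_\beta)$, and only finitely many root vectors survive on any given $v_j$.

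With these reductions in hand, the computation splits naturally. For the diagonal entries, only the Cartan factor $\exp(h\sum_{i,k} B_{ik}H_i\otimes H_k)$ contributes; acting on $1\otimes v_j$ it yields $\exp\bigl(h\sum_{i,k} B_{ik}\mu_j(H_k)H_i\bigr)\otimes v_j$, and using $(B_{ik})=(C_{ik})^{-1}$ with $C_{ik}=d_k^{-1}a_{ik}$ together with the expression of $\mu_j$ in simple roots one obtains $K_{-\mu_j}\otimes v_j$, hence $(m^+)^j_j = K_{-\mu_j}$. The weights $\mu_5=\tfrac12(\alpha_1-\alpha_3)$, $\mu_7=\tfrac12(\alpha_1+2\alpha_2+\alpha_3)$, $\mu_8=\tfrac12(\alpha_1+2\alpha_2+3\alpha_3)$ read off from Figure $1$ then produce the stated $K$-monomials. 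For the three off-diagonal entries I locate the unique simple root $\alpha_k$ with $F_k(v_j)$ proportional to $v_i$: from Figure $1$ these are $F_1(v_5)\propto v_3$, $F_2(v_7)\propto v_6$, $F_3(v_8)\propto v_7$. Only the linear term of the $q$-exponential attached to $\alpha_k$ survives, contributing an $E_k$-factor with coefficient $(1-q_k^{-2})=-q_k^{-1}(q_k-q_k^{-1})$ combined with the Cartan piece already computed; using $K_iE_j=q_i^{a_{ij}}E_jK_i$ to move the Cartan past $E_k$, and keeping in mind $q_3=q^{1/2}$ for the short root, yields the stated $-(q-q^{-1})E_1K_1^{-1/2}K_3^{1/2}$ and the two analogous entries. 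The $m^-$-formulae follow symmetrically from $\mathcal{R}^{-1}=(S\otimes\textrm{id})(\mathcal{R})$: the Cartan exponent changes sign producing $(m^-)^j_j = K_{\mu_j}$, and the paired $F_k$-terms carry coefficients $q(q-q^{-1})$ or $q(q^{1/2}-q^{-1/2})$ after absorbing the antipode signs in the transition from $U(\mathrm{L}^{\pm})$ to $\widehat{U(R_{VV})}$.

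The main obstacle is the bookkeeping in the Cartan sector: translating $\exp(h\sum B_{ij}H_i\otimes H_j)$ evaluated on a half-integer weight vector into the clean form $K_{-\mu_j}$, and handling the short root normalization $q_3 = q^{1/2}$ consistently when it meshes with the coefficients $(1-q_3^{-2})$ coming from the $\alpha_3$-exponential. Once this conversion is carried out carefully for one diagonal entry, the remaining seven entries reduce to direct table-lookups from Figure $1$ together with the standard $q$-commutation rules.
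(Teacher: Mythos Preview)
Your approach is correct and is precisely the computation the paper has in mind: the paper states this lemma without proof, relying on Lemma~\ref{m+} (that $m^{\pm}$ arise from the $L$-functionals via the antipode) together with the explicit universal $R$-matrix~\eqref{imp2} and the combinatorics of the spin module in Figure~$1$. Your use of minusculeness to truncate each $q$-exponential at first order, and your observation that for each listed off-diagonal entry the weight difference $\mu_j-\mu_i$ is a \emph{simple} root (so no composite-root or product term can contribute), are exactly the reductions that make the computation finite and clean; this matches the paper's implicit reasoning.
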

With these data in hand, we have the following

\begin{theorem}
Identify elements
$e^{8},
f_{8},
(m^{+})^{8}_{8}c^{-1}$
with the additional simple root vectors
$E_{4}, F_{4}$ and the group-like $K_{4}$,
then the quantum group
$U(V^{\vee}(R^{\prime}, R_{21}^{-1}),\widetilde{U_{q}^{ext}(B_{3})},V(R^{\prime},R))$
is exactly the
$U_{q}(F_{4})$ with $K_{i}^{\frac{1}{2}}$ adjoined, $1\leq i\leq3$.
\end{theorem}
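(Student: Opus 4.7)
The plan is to identify $U = U(V^{\vee}(R', R_{21}^{-1}), \widetilde{U_q^{\textrm{ext}}(B_3)}, V(R', R))$ with $U_q(F_4)$ with half-powers $K_i^{\frac{1}{2}}$ ($1 \le i \le 3$) adjoined, by verifying the defining Drinfeld--Jimbo relations under the identifications $E_4 := e^8$, $F_4 := f_8$, $K_4 := (m^+)^8_8 c^{-1}$ (so $K_4^{-1} = c (m^-)^8_8$), retaining $E_i, F_i, K_i^{\pm \frac{1}{2}}$ ($1 \le i \le 3$) from the embedded $U_q^{\textrm{ext}}(B_3)$. Since $v_8$ has weight $\mu_8 = \lambda_3 = \frac{1}{2}(\varepsilon_1 + \varepsilon_2 + \varepsilon_3)$, Lemma \ref{lemmf4} gives $(m^+)^8_8 = K_1^{-\frac{1}{2}} K_2^{-1} K_3^{-\frac{3}{2}} = K_{\mu_8}^{-1}$ and $(m^-)^8_8 = K_{\mu_8}$, whence $K_4 = K_{\mu_8}^{-1} c^{-1}$ is group-like and $K_4 K_4^{-1} = 1$ as required.

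The Cartan-type relations come first. The $K_i$ ($1 \le i \le 4$) mutually commute because the diagonal $(m^\pm)^i_i$ lie in the abelian subalgebra generated by $K_j^{\pm \frac{1}{2}}$ and $c$ is central. Since $c$ also commutes with $E_j, F_j$ for $j \le 3$, the standard Cartan relations of $U_q^{\textrm{ext}}(B_3)$ give $K_4 E_j K_4^{-1} = q^{-(\mu_8, \alpha_j)} E_j$ and $K_4 F_j K_4^{-1} = q^{(\mu_8, \alpha_j)} F_j$. The inner products $(\mu_8, \alpha_1) = (\mu_8, \alpha_2) = 0$ and $(\mu_8, \alpha_3) = \frac{1}{2}$ match the $F_4$ values $-(\alpha_4, \alpha_j)$. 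For $K_4 E_4 K_4^{-1}$, the cross relation $e^i (m^+)^j_k = R_{VV}{}^{ji}_{ab} (m^+)^a_k e^b$ at $i = j = k = 8$ collapses to one term by the weight constraint (only $(a, b) = (8, 8)$ satisfies $\mu_a + \mu_b = 2\mu_8$), yielding $e^8 (m^+)^8_8 = q^{\frac{3}{4}} (m^+)^8_8 e^8$; combined with $e^8 c = \lambda c e^8 = q^{-\frac{1}{4}} c e^8$ this produces $K_4 E_4 K_4^{-1} = q E_4 = q^{(\alpha_4, \alpha_4)} E_4$. The relation $[E_4, F_4] = (K_4 - K_4^{-1})/(q^{\frac{1}{2}} - q^{-\frac{1}{2}})$ then follows from Theorem \ref{cor1}, namely $[e^8, f_8] = ((m^+)^8_8 c^{-1} - c (m^-)^8_8)/(q_* - q_*^{-1})$, provided one checks $q_* = q^{\frac{1}{2}}$; this is built into the choice $\lambda = q^{-\frac{1}{4}}$ (in accordance with Proposition \ref{constant}), which renormalizes $R_{VV}{}^{88}_{88}$ to $q = q_*^2$.

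Next come the Serre relations. For $j = 1, 2$ (non-adjacent to $\alpha_4$ in the $F_4$-diagram), one verifies $[E_4, E_j] = 0$ by expressing $E_j$ via Lemma \ref{lemmf4} as a quotient of two $(m^+)^{\bullet}_{\bullet}$ entries, applying the cross relation of Theorem \ref{cor1} twice, and using the weight constraint on the relevant $R_{VV}{}^{8 j'}_{ab}$ entries to force cancellation. The delicate point is the pair of cubic relations $E_4^2 E_3 - [2]_{q^{\frac{1}{2}}} E_4 E_3 E_4 + E_3 E_4^2 = 0$ and $E_3^2 E_4 - [2]_{q^{\frac{1}{2}}} E_3 E_4 E_3 + E_4 E_3^2 = 0$ corresponding to the simple bond between $\alpha_3$ and $\alpha_4$ in $F_4$, together with their $F$-analogues. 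Here one expresses $E_3$ in terms of $(m^+)^7_8$ and $(m^+)^8_8$, expands the left-hand side via Theorem \ref{cor1} iteratively, and shows that the resulting quadratic combinations in the $e^i$ vanish as a consequence of the defining relations $e^i e^j = R'{}^{ji}_{ab} e^a e^b$ of $V(R', R)$; the identity $(PR + I)(PR' - I) = 0$ together with the explicit minimal polynomial of $\hat R_{VV}$ computed above supplies exactly the quadratic constraints needed.

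Finally, the verified relations produce a Hopf surjection from the extended $U_q(F_4)$ onto $U$. The remaining generators $e^i, f_i$ for $i < 8$ are recovered as iterated $q$-commutators of $E_4$ with the $E_j$ ($j \le 3$), whose weights run through the $B_3$-orbit of $\mu_8$ and realize the eight short positive roots $\frac{1}{2}(\pm \varepsilon_1 \pm \varepsilon_2 \pm \varepsilon_3 \pm \varepsilon_4)$ in $F_4$ after the identification $\mu_8 + \frac{1}{2}\varepsilon_4 \leftrightarrow \alpha_4$. A PBW-count matching the triangular decomposition $V^{\vee}(R', R_{21}^{-1}) \otimes \widetilde{U_q^{\textrm{ext}}(B_3)} \otimes V(R', R)$ from Theorem \ref{cor1} against the PBW basis of $U_q(F_4)^{\textrm{ext}}$ then closes the isomorphism. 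I expect the main obstacle to lie in the cubic $(E_3, E_4)$ Serre relation: the $j = 1, 2$ cases are trivialized by weight separation, whereas the $E_3$--$E_4$ relation requires intricate bookkeeping of the eight nonzero $(m^\pm)^{\bullet}_{\bullet}$ entries from Lemma \ref{lemmf4} against the specific structure constants of $R_{VV}$, with the symmetry $R_{VV}{}^{ij}_{kl} = R_{VV}{}^{lk}_{ji}$ established above playing a pivotal role.
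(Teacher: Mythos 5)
Your plan follows essentially the same route as the paper's own proof: identify $E_4,F_4,K_4$ with $e^8,f_8,(m^+)^8_8c^{-1}$, then verify the $F_4$ Cartan data and Serre relations by combining the cross relations of Theorem \ref{cor1} with the explicit $m^{\pm}$ entries of Lemma \ref{lemmf4} (weight arguments killing the $\alpha_1,\alpha_2$ cases, and the $V(R',R)$ quadratic relations coming from $R'$, equivalently the minimal polynomial, handling the $E_3$--$E_4$ cubic relations via the auxiliary element $e^7$ and the commutation $e^8e^7=q^{1/2}e^7e^8$). The only difference is your closing surjectivity-plus-PBW sketch, which the paper leaves implicit, and a harmless convention-level slip in the direction of the conjugation $K_4E_4K_4^{-1}$; neither affects the argument.
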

\begin{proof}
According to the cross relations in Theorem \ref{cor1},
we obtain
$$e^{8}(m^{+})^{8}_{8}c^{-1}
=\lambda R^{88}_{88}(m^{+})^{8}_{8}e^{8}c^{-1}
=R^{88}_{88}(m^{+})^{8}_{8}c^{-1}e^{8}
=q^{\frac{1}{4}}R_{VV}{}^{88}_{88}(m^{+})^{8}_{8}c^{-1}e^{8}
=q(m^{+})^{8}_{8}c^{-1}e^{8}.
$$
Then with the above identification,
we have
$E_{4}K_{4}=qK_{4}E_{4}$.
The relations between $E_{4}$ and $K_{i}$ can be obtained by the following relations
\begin{gather*}
(m^{+})^{8}_{8}K_{3}=(m^{+})^{7}_{7},\quad
e^{8}(m^{+})^{5}_{5}=\lambda R^{58}_{58}(m^{+})^{5}_{5}e^{8}=q^{-\frac{1}{4}}(m^{+})^{5}_{5}e^{8},\\
e^{8}(m^{+})^{7}_{7}=\lambda R^{78}_{78}(m^{+})^{7}_{7}e^{8}=q^{\frac{1}{4}}(m^{+})^{7}_{7}e^{8},\quad
e^{8}(m^{+})^{8}_{8}=\lambda R^{88}_{88}(m^{+})^{8}_{8}e^{8}=q^{\frac{3}{4}}(m^{+})^{8}_{8}e^{8}.
\end{gather*}
From the above relations,
we obtain that
$
e^{8}K_{1}=K_{1}e^{8},\,
e^{8}K_{2}=K_{2}e^{8}
$
and
$
e^{8}K_{3}=q^{-\frac{1}{2}}K_{3}e^{8}.
$
Namely,
$$
E_{4}K_{1}=K_{1}E_{4},\quad
E_{4}K_{2}=K_{2}E_{4},\quad
E_{4}K_{3}=q^{-\frac{1}{2}}K_{3}E_{4}.
$$
The relations between $K_{4}=K_{1}^{-\frac{1}{2}}K_{2}^{-1}K_{3}^{-\frac{3}{2}}c^{-1}$ and $E_{i}$ are given by

$E_{1}K_{4}=E_{1}K_{1}^{-\frac{1}{2}}K_{2}^{-1}K_{3}^{-\frac{3}{2}}c^{-1}
=q^{-1}qK_{1}^{-\frac{1}{2}}K_{2}^{-1}K_{3}^{-\frac{3}{2}}c^{-1}E_{1}=K_{4}E_{1},$

$
E_{2}K_{4}=E_{2}K_{1}^{-\frac{1}{2}}K_{2}^{-1}K_{3}^{-\frac{3}{2}}c^{-1}
=q^{\frac{1}{2}}q^{-2}q^{\frac{2}{3}}K_{1}^{-\frac{1}{2}}K_{2}^{-1}K_{3}^{-\frac{3}{2}}c^{-1}E_{2}=K_{4}E_{2},
$

$
E_{3}K_{4}=E_{3}K_{1}^{-\frac{1}{2}}K_{2}^{-1}K_{3}^{-\frac{3}{2}}c^{-1}
=qq^{-\frac{3}{2}}K_{1}^{-\frac{1}{2}}K_{2}^{-1}K_{3}^{-\frac{3}{2}}c^{-1}E_{3}=q^{-\frac{1}{2}}K_{4}E_{3}.
$

We will check the $q$-Serre relations related to $E_{i}$,
which also can be obtained by the cross relations in Theorem \ref{cor1}.
In fact,
by Lemma \ref{lemmf4} and Theorem \ref{cor1},
we have
$$
(m^{+})^{3}_{5}=-(q-q^{-1})E_{1}(m^{+})^{5}_{5},\quad
e^{8}(m^{+})^{3}_{5}=\lambda R^{38}_{38}(m^{+})^{3}_{5}e^{8}=q^{-\frac{1}{4}}(m^{+})^{3}_{5}e^{8},
$$
These yield
$
e^{8}E_{1}=E_{1}e^{8}
$,
namely,
$
E_{4}E_{1}=E_{1}E_{4}.
$
On the other hand,
starting from the following cross relations
(followed by Lemma \ref{lemmf4} and Theorem \ref{cor1}, respectively)
$$
(m^{+})^{6}_{7}=-(q-q^{-1})E_{2}(m^{+})^{7}_{7},\quad
e^{8}(m^{+})^{6}_{7}=\lambda R^{68}_{68}(m^{+})^{6}_{7}e^{8}=q^{\frac{1}{4}}(m^{+})^{6}_{7}e^{8},
$$
we obtain
$
e^{8}E_{2}=E_{2}e^{8},
$
namely,
$
E_{4}E_{2}=E_{2}E_{4}.
$
In order to check the relation between $E_{4}~(e^{8})$ and $E_{3}$,
we consider the cross relations from \ref{lemmf4} and Theorem \ref{cor1},
\begin{gather*}
(m^{+})^{7}_{8}=-(q^{\frac{1}{2}}-q^{-\frac{1}{2}})E_{3}(m^{+})^{8}_{8},\quad
e^{7}(m^{+})^{8}_{8}=\lambda R^{87}_{87}(m^{+})^{8}_{8}e^{7}=q^{\frac{1}{4}}(m^{+})^{8}_{8}e^{7},\\
e^{8}(m^{+})^{7}_{8}=q^{\frac{1}{4}}(m^{+})^{7}_{8}e^{8}+(q^{\frac{1}{2}}-q^{-\frac{1}{2}})q^{\frac{1}{4}}(m^{+})^{8}_{8}e^{7},
\end{gather*}
Thus we obtain that
$
e^{7}=q^{-\frac{1}{2}}E_{3}e^{8}-e^{8}E_{3}.
$
Further more,
we need the relation $e^{7}E_{3}=q^{\frac{1}{2}}E_{3}e^{7}$,
 which can be obtained by $e^{7}(m^{+})^{7}_{8}=\lambda R^{77}_{77}(m^{+})^{7}_{8}e^{7}=q^{\frac{3}{4}}(m^{+})^{7}_{8}e^{7}$ (by Theorem \ref{cor1}).
Combining with the above equality $e^{7}=q^{-\frac{1}{2}}E_{3}e^{8}-e^{8}E_{3},$
we obtain $e^{8}(E_{3})^{2}-(q^{\frac{1}{2}}+q^{-\frac{1}{2}})E_{3}e^{8}E_{3}+(E_{3})^{2}e^{8}=0,$
namely,
$$
E_{4}(E_{3})^{2}-(q^{\frac{1}{2}}+q^{-\frac{1}{2}})E_{3}E_{4}E_{3}+(E_{3})^{2}E_{4}=0.
$$
On the other hand,
in order to check another $q$-Serre relation,
we need to figure out the relation between $e^{7}$ and $e^{8}$,
that is,
$e^{8}e^{7}=R'{}^{78}_{ab}e^{a}e^{b}$.
Since the nonzero entries in rows (78) and (87) of $R$ are
$
R^{78}_{78}=q^{\frac{1}{2}},
$
$
R^{78}_{87}=q-1,
$
$
R^{87}_{87}=q^{\frac{1}{2}},
$
then we have
\begin{gather*}
(R\hat{R}^2)^{78}_{78}=R^{78}_{78}R^{87}_{87}R^{78}_{78}+R^{78}_{87}R^{78}_{87}R^{78}_{78}=q^{\frac{3}{2}}+q^{\frac{1}{2}}(q-1)^{2},\\
(R\hat{R}^2)^{78}_{87}=R^{78}_{78}R^{87}_{87}R^{78}_{87}+R^{78}_{87}R^{78}_{78}R^{87}_{87}+R^{78}_{87}R^{78}_{87}R^{78}_{87}=q^{3}-q^2+q-1,\\
(R\hat{R})^{78}_{78}=R^{78}_{87}R^{78}_{78}=q^{\frac{1}{2}}(q-1),\quad
(R\hat{R})^{78}_{87}=R^{78}_{78}R^{87}_{87}+R^{78}_{87}R^{78}_{87}=q^2-q+1.
\end{gather*}
From the formula \eqref{prime} of $R, R'$ we get,
nonzero entries in rows (78) of $R^{\prime}$ are
\begin{gather*}
R'{}^{78}_{78}=q^{\frac{3}{2}}+q^{\frac{1}{2}}(q-1)^{2}+(q^{-2}-q+q^{-5})q^{\frac{1}{2}}(q-1)+(q^{-7}-q^{-4}-q^{-1})q^{\frac{1}{2}},
\\
R'{}^{78}_{87}=q^{3}-q^2+q-1+(q^{-2}-q+q^{-5})(q^2-q+1)+(q^{-7}{-}q^{-4}{-}q^{-1})(q-1)-(q^{-6}{-}1).
\end{gather*}
So
$e^{8}e^{7}=R'{}^{78}_{78}e^{7}e^{8}+R'{}^{78}_{87}e^{8}e^{7}$,
then we obtain
$e^{8}e^{7}=q^{\frac{1}{2}}e^{7}e^{8}.
$
Combining with the above equality $e^{7}=q^{-\frac{1}{2}}E_{3}e^{8}-e^{8}E_{3}$ again,
we obtain $(e^{8})^{2}E_{3}-(q^{\frac{1}{2}}+q^{-\frac{1}{2}})e^{8}E_{3}e^{8}+E_{3}(e^{8})^{2}=0$,
namely,
$$
(E_{4})^{2}E_{3}-(q^{\frac{1}{2}}+q^{-\frac{1}{2}})E_{4}E_{3}E_{4}+E_{3}(E_{4})^{2}=0.
$$

Thence,
we obtain that the Cartan matrix of the new quantum group is
$
\left(
\begin{array}{cccc}
2&-1&0&0\\
-1&2&-2&0\\
0&-1&2&-1\\
0&0&-1&2
\end{array}
\right).
$

This completes the proof.
\end{proof}

\subsection{Type-crossing of $U_{q}(G_{2})$ via $U_q(A_1)$}
From the classical cases, we know that $U_{q}(\mathfrak{sl}_{3})$
was constructed by choosing braided groups corresponding to the
vector representation of $U_{q}(\mathfrak{sl}_{2})$. This inspires
us to consider the spin $\frac{3}{2}$ representation
$T_{\textrm{sp}}$ of $U_{q}(\mathfrak{sl}_{2})$ when constructing
$U_{q}(G_{2})$,
 here $T_{\textrm{sp}}$ is taken from the $4$-dimensional homogeneous subspace of degree $3$ of braided group $\mathcal{O}(\mathbb{C}_{q}^{2})$ generated by $x, y$  with $yx=qxy$ in the category of $U_{q}(\mathfrak{sl}_{2})$-modules.
In fact, this braided group $\mathcal A_q(2)=\mathbb{C}_q\langle x,
y\rangle$ is a left $U_{q}(\mathfrak{sl}_{2})$-module algebra
(\cite{H}). Explicitly, starting from the $2$-dimensional vector
representation of $U_{q}(\mathfrak{sl}_{2})$, we have the standard
$4\times 4$ $R$-matrix below
$$R=
\left(
\begin{array}{cccc}
q & ~0 & ~0 & ~0\\
0 & ~1 & ~q-q^{-1} & ~0\\
0 & ~0& ~1 & ~0\\
0 & ~0 & ~0 & ~q
\end{array}
\right).\qquad T= \left(
\begin{array}{cccc}
a&b\\
c&d
\end{array}
\right).
$$
For such $R$, the FRT-bialgebra $A(R)$ has a quotient Hopf algebra
generated by the entries $a, b, c, d$ in the above matrix $T$,
denoted by $\mathcal{O}_{q}(SL(2))$. Dually, there is a right
$\mathcal{O}_{q}(SL(2))$-comodule algebra structure on $\mathcal
A_q(2)$, namely
\begin{equation}\label{vector}
\rho(
y,\ x
)
=
(y\otimes a+x\otimes c, y\otimes b+x\otimes d)
=(y, \ x)\otimes T.
\end{equation}
On the $4$-dimensional homogeneous subspace of degree $3$ consisting
of $x^{i-1}y^{3-(i-1)},i=1,2,3,4$, denoted by $\mathcal
A_q(2)^{(3)}$, there exists a right
$\mathcal{O}_{q}(SL(2))$-subcomodule induced by \eqref{vector}, its
corepresentation matrix is still denoted by $T$, given by
\begin{equation}\label{spin}
T= \left(
\begin{array}{cccc}
t^{1}_{1}&t^{1}_{2}&t^{1}_{3}&t^{1}_{4}\\
t^{2}_{1}&t^{2}_{2}&t^{2}_{3}&t^{2}_{4}\\
t^{3}_{1}&t^{3}_{2}&t^{3}_{3}&t^{3}_{4}\\
t^{4}_{1}&t^{4}_{2}&t^{4}_{3}&t^{4}_{4}
\end{array}
\right) = \left(
\begin{array}{cccc}
a^3 & a^2b & ab^2 & b^3\\
{[3]_q} ca^2 & a^{2}d+(q^{-2}{+}1)cab & q^{-2}c b^2+(q^{-2}{+}1)adb & {[3]_q} d b^2\\
{[3]_q} c^2a & q^{-2}c^2b +(q^{-2}{+}1)acd & ad^2+(q^{-2}{+}1)cdb & {[3]_q} d^2b\\
c^3 & c^2d & cd^2 & d^3
\end{array}
\right).
\end{equation}

Since the dual pair of $(U_q^{\textrm{ext}}(\mathfrak sl_2),\mathcal{O}_{q}(SL(2)))$,
then this 4-dimensional right $\mathcal{O}_{q}(SL(2))$-comodule induces a left $U_q(\mathfrak sl_2)$-module,
called as spin $\frac{3}{2}$
representation $T_{\textrm{sp}}$,
give by
\begin{equation}\label{spinrep}
K_{1}\left(
\begin{array}{c}
v_{1}\\
v_{2}\\
v_{3}\\
v_{4}
\end{array}
\right) =\left(
\begin{array}{c}
q^{-3}v_{1}\\
q^{-1}v_{2}\\
qv_{3}\\
q^{3}v_{4};
\end{array}
\right), \quad E_{1} \left(
\begin{array}{c}
v_{1}\\
v_{2}\\
v_{3}\\
v_{4}
\end{array}
\right) = \left(
\begin{array}{c}
q^{-\frac{3}{2}}[3]^{\frac{1}{2}}v_{2}\\
q^{-\frac{1}{2}}[2]v_{3}\\
q^{\frac{1}{2}}[3]^{\frac{1}{2}}v_{4}\\
0
\end{array}
\right), \quad F_{1} \left(
\begin{array}{c}
v_{1}\\
v_{2}\\
v_{3}\\
v_{4}
\end{array}
\right) = \left(
\begin{array}{c}
0\\
q^{\frac{3}{2}}[3]^{\frac{1}{2}}v_{1}\\
q^{\frac{1}{2}}[2]v_{2}\\
q^{-\frac{1}{2}}[3]^{\frac{1}{2}}v_{3}
\end{array}
\right).
\end{equation}
Where $v_i$ denote the basis of
representation space of $T_{\textrm{sp}}$,
with
corresponding weights
$-\frac{3}{2}\alpha_{1},$
$-\frac{1}{2}\alpha_{1},$
$\frac{1}{2}\alpha_{1},$
$\frac{3}{2}\alpha_{1}$.
Moreover,
we observe that $E_{1}^{i}, F_{1}^{i}$ are zero actions for all
$i\geq 4$, so we needn't to consider every summand in the expression
of universal $\mathcal{R}$-matrix. 
Then $$
B_{VV}\circ(T_{V}\otimes T_{V})(\mathcal{R})(v_{i}\otimes v_{j})
=B_{VV}\circ(T_{V}\otimes
T_{V})(\sum\limits_{i=0}^{3}c_{i}E_{1}^{i}\otimes
F_{1}^{i})(v_{i}\otimes v_{j}),$$
here,
$c_{n}=\frac{(1-q^{-2})^{n}q^{\frac{n(n+1)}{2}}}{[n]_{q}!}.$ With these, we obtain that the
following $16\times 16$ $R$-matrix $R_{VV}$ corresponding to the
spin representation \eqref{spinrep}.
$$
\left(
\begin{array}{cccccccccccccccc}
q^{\frac{9}{2}}&0&0&0&0&0&0&0&0&0&0&0&0&0&0&0\\
0&q^{\frac{3}{2}}&0&0&\frac{c_{1}[3]}{q^{-\frac{3}{2}}}&0&0&0&0&0&0&0&0&0&0&0\\
0&0&q^{-\frac{3}{2}}&0&0&\frac{c_{1}[3]^{\frac{1}{2}}}{[2]^{-1}q^{\frac{1}{2}}}&0&0&\frac{c_{2}[3]}{[2]^{-2}q^{\frac{3}{2}}}&0&0&0&0&0&0&0\\
0&0&0&q^{-\frac{9}{2}}&0&0&\frac{c_{1}[3]}{q^{\frac{5}{2}}}&0&0&\frac{c_{2}[3]}{[2]^{-2}q^{\frac{5}{2}}}&0&0&\frac{c_{3}[3]^{2}}{[2]^{-2}q^{\frac{9}{2}}}&0&0&0\\
0&0&0&0&q^{\frac{3}{2}}&0&0&0&0&0&0&0&0&0&0&0\\
0&0&0&0&0&q^{\frac{1}{2}}&0&0&\frac{c_{1}[3]^{\frac{1}{2}}}{[2]^{-1}q^{\frac{1}{2}}}&0&0&0&0&0&0&0\\
0&0&0&0&0&0&q^{-\frac{1}{2}}&0&0&\frac{c_{1}[2]^{2}}{q^{\frac{1}{2}}}&0&0&\frac{c_{2}[3]}{[2]^{-2}q^{\frac{5}{2}}}&0&0&0\\
0&0&0&0&0&0&0&q^{-\frac{3}{2}}&0&0&\frac{c_{1}[3]^{\frac{1}{2}}}{[2]^{-1}q^{\frac{1}{2}}}&0&0&\frac{c_{2}[3]}{[2]^{-2}q^{\frac{3}{2}}}&0&0\\
0&0&0&0&0&0&0&0&q^{-\frac{3}{2}}&0&0&0&0&0&0&0\\
0&0&0&0&0&0&0&0&0&q^{-\frac{1}{2}}&0&0&\frac{c_{1}[3]}{q^{\frac{5}{2}}}&0&0&0\\
0&0&0&0&0&0&0&0&0&0&q^{\frac{1}{2}}&0&0&\frac{c_{1}[3]^{\frac{1}{2}}}{[2]^{-1}q^{\frac{1}{2}}}&0&0\\
0&0&0&0&0&0&0&0&0&0&0&q^{\frac{3}{2}}&0&0&\frac{c_{1}[3]}{q^{-\frac{3}{2}}}&0\\
0&0&0&0&0&0&0&0&0&0&0&0&q^{-\frac{9}{2}}&0&0&0\\
0&0&0&0&0&0&0&0&0&0&0&0&0&q^{-\frac{3}{2}}&0&0\\
0&0&0&0&0&0&0&0&0&0&0&0&0&0&q^{\frac{3}{2}}&0\\
0&0&0&0&0&0&0&0&0&0&0&0&0&0&0&q^{\frac{9}{2}}
\end{array}
\right)
$$

Obviously, the matrix $PR_{VV}$ is symmetric.
For this $R$-matrix $R_{VV}$,
the existence of $R,R^{\prime}$ is guaranteed by
Remark \ref{rem1}.
We can obtain that the quantum group normalization constant is
$\lambda=\frac{q^{3}}{q^{(\frac{3}{2}\alpha,\frac{3}{2}\alpha)}}=\frac{q^{3}}{q^{\frac{9}{2}}}=q^{-\frac{3}{2}}$ by Proposition \ref{constant}.
For consistency, we denote the generators of
braided groups $V^{\vee}(R^{\prime},R_{21}^{-1})$ induced by the
representation $T_{\textrm{sp}}$ by $f_{i}$, and identify $f_{i}$ as
$x^{i-1}y^{3-(i-1)},i=1,2,3,4$,
then the algebra structure of $V^{\vee}(R^{\prime},R_{21}^{-1})$ can be obtained directly by $yx=qxy$,
not necessary deduced from the matrix $R'$,
which is different from the case of construction for $U_q(F_4)$.
On the other hand,
in order to obtain the
relations in the new quantum group by the generalized
double-bosonization construction theorem, we need to know the matrix
$m^{\pm}$ consisting of FRT-generators, which are listed in the
following lemma.
\begin{lemma}\label{lemf}
Corresponding to the spin $\frac{3}{2}$ representation, the matrices
$m^{\pm}$ are given by
$$
m^{+}= \left(
\begin{array}{cccc}
K_{1}^{\frac{3}{2}} &
c_{1}[3]^{\frac{1}{2}}q^{\frac{3}{2}}K_{1}^{\frac{1}{2}}E_{1}  &
-c_{2}[3]^{\frac{1}{2}}K_{1}^{-\frac{1}{2}}E_{1}^{2}
& c_{3}[3][2]q^{-\frac{9}{2}}K_{1}^{-\frac{3}{2}}E_{1}^{3}\\
o & K_{1}^{\frac{1}{2}}  &c_{1}[2]q^{\frac{1}{2}}K_{1}^{-\frac{1}{2}}E_{1} & -c_{2}[3]^{\frac{1}{2}}[2]q^{-2}K_{1}^{-\frac{3}{2}}E_{1}^{2}\\
0 & 0 & K_{1}^{-\frac{1}{2}} & c_{1}[3]^{\frac{1}{2}}q^{-\frac{1}{2}}K_{1}^{-\frac{3}{2}}E_{1}\\
0 & 0 & 0 &  K_{1}^{-\frac{3}{2}}
\end{array}
\right)
$$
$$
m^{-}= \left(
\begin{array}{cccc}
K_{1}^{-\frac{3}{2}}&0&0&0\\
-c_{1}[3]^{\frac{1}{2}}q^{-\frac{9}{2}}K_{1}^{-\frac{1}{2}}F_{1}&K_{1}^{-\frac{1}{2}}&0&0\\
c_{2}[3]^{\frac{1}{2}}[2]q^{-8}K_{1}^{\frac{1}{2}}F_{1}^{2}&-c_{1}[2]q^{-\frac{5}{2}}K_{1}^{\frac{1}{2}}F_{1}&K_{1}^{\frac{1}{2}}&0\\
-c_{3}[3][2]q^{-\frac{9}{2}}K_{1}^{\frac{3}{2}}F_{1}^{3}&c_{2}[3]^{\frac{1}{2}}K_{1}^{\frac{1}{2}}F_{1}^{2}
&-c_{1}[3]^{\frac{1}{2}}q^{\frac{1}{2}}K_{1}^{\frac{3}{2}}F_{1}&K_{1}^{\frac{3}{2}}
\end{array}
\right)
$$
\end{lemma}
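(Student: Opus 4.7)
The strategy is to invoke Lemma \ref{m+} and reduce the statement to computing the $L$-functionals $L^{\pm}$ of $U_h(\mathfrak{sl}_2)$ corresponding to the spin-$\tfrac{3}{2}$ representation $T_{\textrm{sp}}$, then transporting them to $\widehat{U(R_{VV})}=U_q^{\textrm{ext}}(\mathfrak{sl}_2)$ through the antipode $S$ of $U_h(\mathfrak{sl}_2)$. The key simplification is that $T_{\textrm{sp}}(E_1)^{k}=T_{\textrm{sp}}(F_1)^{k}=0$ for $k\ge 4$, so the $q$-exponential factor in the universal $R$-matrix \eqref{imp2} truncates and we only need a finite sum.

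First I would write the truncated universal $R$-matrix as
$$\mathcal{R}\;=\;q^{H_1\otimes H_1/2}\,\sum_{n=0}^{3}c_n\,E_1^{n}\otimes F_1^{n},$$
with $c_n$ the coefficients already appearing in the lemma. Then, for each basis vector $v_j\in V$ of $H_1$-eigenvalue $\mu_j\in\{-3,-1,1,3\}$, I would compute
$$(\textrm{id}\otimes T_{\textrm{sp}})(\mathcal{R})(1\otimes v_j) \;=\; \sum_{n=0}^{3}c_n\,q^{-n\mu_j}\,K_1^{\mu_j/2}E_1^{n}\otimes T_{\textrm{sp}}(F_1)^{n}v_j,$$
using that $E_1^n K_1^{\mu_j/2}=q^{-n\mu_j}K_1^{\mu_j/2}E_1^n$ and that $T_{\textrm{sp}}(F_1)^n v_j$ is read off directly from \eqref{spinrep} (producing $v_{j-n}$ up to an explicit scalar in $[2]_q,\,[3]_q^{1/2}$ and powers of $q^{1/2}$). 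The coefficients of $v_i$ in this expansion give the entries $l^{+}_{ij}$ as upper-triangular expressions in $K_1^{\pm 1/2}$ and $E_1$. An entirely parallel calculation, starting from $(T_{\textrm{sp}}\otimes\textrm{id})(\mathcal{R}^{-1})(v_j\otimes 1)$ and using $\mathcal{R}^{-1}=\bigl(\sum_{n=0}^{3}(-1)^{n}q^{-n(n+1)/2}c_n'\,E_1^{n}\otimes F_1^{n}\bigr)q^{-H_1\otimes H_1/2}$ (with $c_n'$ differing from $c_n$ only by signs inherited from inverting the $q$-exponential), produces the lower-triangular matrix $L^{-}$ in terms of $K_1^{\pm 1/2}$ and $F_1$.

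Finally, I would push these $L$-functionals through the morphism $U(L^{\pm})\to\widehat{U(R_{VV})}$ described before Lemma \ref{m+}, which amounts to applying the antipode entrywise: $S(K_1^{a})=K_1^{-a}$, $S(E_1)=-K_1^{-1}E_1$, $S(F_1)=-F_1K_1$. After commuting $K_1^{\pm 1}$ past $E_1^{n}$ and $F_1^{n}$ via $K_1E_1=q^{2}E_1K_1$, $K_1F_1=q^{-2}F_1K_1$, one collects powers of $q$ and arrives at the displayed expressions for $m^{\pm}$. The triangular shapes are automatic: $m^{+}$ is upper triangular because $E_1$ raises weight and hence only couples $v_j$ to $v_i$ with $i<j$, while $m^{-}$ is lower triangular for the dual reason.

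The only real obstacle is bookkeeping. The entries of $m^{\pm}$ involve four separate sources of $q$-powers and $q$-integers that must be combined correctly: the coefficients $c_n$, the iterated-action constants $q^{n(n+1)/4}[n]_q^{!}$-type factors from $T_{\textrm{sp}}(F_1)^n v_j$ and $T_{\textrm{sp}}(E_1)^n v_j$, the weight-dependent twists $q^{-n\mu_j}$ from commuting through the Cartan exponential, and the $K_1$-shifts produced by the antipode. A careful, term-by-term comparison with the matrices in the statement (checking diagonal entries first, then the first superdiagonal, etc.) will confirm the lemma; no genuinely new idea beyond the explicit formula \eqref{imp2} and Lemma \ref{m+} is required.
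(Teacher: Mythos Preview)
Your proposal is correct and follows exactly the route the paper intends: the lemma is stated without proof in the paper, but Lemma~\ref{m+} (and the diagram preceding it) make clear that the expected method is precisely to compute the $L$-functionals from the truncated universal $R$-matrix and then push through the antipode, which is what you outline. The only quibble is a minor slip in your last paragraph: the upper-triangularity of $L^{+}$ (hence of $m^{+}$) comes from the fact that it is $F_1$, not $E_1$, which acts on $V$ in $(\textrm{id}\otimes T_{\textrm{sp}})(\mathcal{R})$ and lowers the index; the $E_1$'s you see in $m^{+}$ appear only after the antipode.
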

%
 With these, we have
the following
\begin{theorem}\label{g2}
Corresponding to the spin $\frac{3}{2}$ representation, the
normalization constant $\lambda=q^{-\frac{3}{2}}$, and
$R=q^{\frac{3}{2}}R_{VV}$. Identifying $e^{4}, f_{4},
m^{+}{}^{4}_{4}c^{-1}$ with the new additional simple root vectors
$E_{2}, F_{2}$ and the group-like element $K_{2}$, respectively,
then the resulting new quantum group
$U(V^{\vee}(R^{\prime},R_{21}^{-1}),\widetilde{U_{q}^{ext}(\mathfrak{sl}_{2})},V(R^{\prime},R))$
is exactly the $U_{q}(G_{2})$ with $K_{1}^{\frac{1}{2}}$
adjoined.
\end{theorem}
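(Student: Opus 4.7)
The plan is to mirror the $F_4$-argument of the preceding subsection, systematically exploiting the generalized double-bosonization cross relations of Theorem \ref{cor1} together with the explicit entries of $m^{\pm}$ listed in Lemma \ref{lemf}. First I will check the length of the new simple root: from $e^{4}(m^{+})^{4}_{4}=\lambda R^{44}_{44}(m^{+})^{4}_{4}e^{4}$ combined with $e^{i}c=\lambda c e^{i}$, and using $R^{44}_{44}=q^{3/2}R_{VV}{}^{44}_{44}=q^{6}$, a direct computation yields the correct $K_{2}E_{2}$-relation, which pins down $(\alpha_{2},\alpha_{2})$ and identifies $\alpha_{2}$ as the long root of $G_{2}$ (so that the Cartan matrix entries $a_{12}=-1,\,a_{21}=-3$ emerge in the right positions). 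The relations $K_{2}E_{1}=q^{(\alpha_{1},\alpha_{2})}E_{1}K_{2}$ and $K_{1}E_{2}=q^{(\alpha_{1},\alpha_{2})}E_{2}K_{1}$ will follow from the cross relations $e^{4}(m^{+})^{i}_{j}=\lambda R_{VV}{}^{4\,4}_{a\,b}(m^{+})^{a}_{j}e^{b}$ with $(i,j)=(1,1),(2,2),(3,3),(4,4)$ together with Lemma \ref{lemf}; the triangular shape of $R_{VV}$ together with vanishing of most off-diagonal blocks keeps the computation finite.

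Next, I will extract the lower Serre relation: reading off $(m^{+})^{1}_{2},(m^{+})^{2}_{3},(m^{+})^{3}_{4}$ from Lemma \ref{lemf} as $q$-multiples of $E_{1}$ times $(m^{+})^{i+1}_{i+1}$, I use the cross relations $e^{4}(m^{+})^{i}_{i+1}=\lambda R_{VV}{}^{i+1,4}_{ab}(m^{+})^{a}_{i+1}e^{b}$ iteratively to solve for $e^{3},e^{2},e^{1}$ as iterated $q$-commutators of $e^{4}$ with $E_{1}$. Then the relation $e^{i}e^{j}=R'{}^{ji}_{ab}e^{a}e^{b}$ (equivalently, the fact that the $f_{i}=x^{i-1}y^{3-(i-1)}$ obey only the quantum-plane relation $yx=qxy$, transported to $V(R',R)$ via duality) will give $e^{4}E_{1}$-commutation compatibilities. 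Imposing the $q$-commutation between $e^{3}$ (identified as $[E_{2},E_{1}]_{q^{?}}$ up to a scalar) and $E_{1}$, I will read off the short Serre relation $E_{1}^{2}E_{2}-[2]_{q^{3}}E_{1}E_{2}E_{1}+E_{2}E_{1}^{2}=0$.

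The main obstacle is the long Serre relation for $G_{2}$: verifying $\mathrm{ad}_{q}^{4}(E_{2})(E_{1})=0$, i.e.\
\[
E_{2}^{4}E_{1}-[4]_{q}E_{2}^{3}E_{1}E_{2}+\tfrac{[4]_{q}[3]_{q}}{[2]_{q}}E_{2}^{2}E_{1}E_{2}^{2}-[4]_{q}E_{2}E_{1}E_{2}^{3}+E_{1}E_{2}^{4}=0.
\]
Unlike the $F_{4}$-case, where a single quartic identity sufficed, here I must iterate the cross relations four times, controlling the lower-order correction terms arising from the non-trivial entries of $R'$ on the $4$-dimensional homogeneous block $\mathcal{A}_{q}(2)^{(3)}\otimes \mathcal{A}_{q}(2)^{(3)}$. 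To handle this, I intend to decompose $V\otimes V$ into four $U_{q}(\mathfrak{sl}_{2})$-isotypic components (of highest weights $3\alpha_{1},2\alpha_{1},\alpha_{1},0$), read off the four eigenvalues of $\hat{R}_{VV}$ from the weight-action formulas, obtain the minimal polynomial of $\hat{R}_{VV}$ as in Proposition 4.2, and then compute $R'$ via the prescription of Remark \ref{rem1}. With $R'$ in hand, the long Serre relation reduces to a finite $q$-polynomial identity in $E_{1},E_{2}$, which I will verify by a direct but finite check of the coefficients in $e^{4}e^{i}-R'{}^{i\,4}_{ab}e^{a}e^{b}=0$ for $i=1,2,3$. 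Collecting all the derived relations then yields precisely the defining relations of $U_{q}(G_{2})$ (with $K_{1}^{1/2}$ adjoined), completing the identification.
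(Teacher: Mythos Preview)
Your overall strategy is right, but you have the two Serre relations swapped, and this is not merely a labeling issue: it makes both of your proposed derivations fail. You correctly identify $\alpha_{2}$ as the long root (since $R^{44}_{44}=q^{6}$ forces $(\alpha_{2},\alpha_{2})=6$), but then record $a_{12}=-1,\ a_{21}=-3$; with $a_{ij}=2(\alpha_{i},\alpha_{j})/(\alpha_{i},\alpha_{i})$ and $\alpha_{1}$ short one has $a_{12}=-3,\ a_{21}=-1$, matching the paper's matrix $\left(\begin{smallmatrix}2&-3\\-1&2\end{smallmatrix}\right)$. Consequently the Serre relations are \emph{quadratic in $E_{2}$} and \emph{quartic in $E_{1}$}, exactly opposite to what you wrote. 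Concretely, the $q$-commutation of $e^{3}$ with $E_{1}$ does \emph{not} produce a Serre relation: the cross relation $e^{3}(m^{+})^{1}_{2}=\dots$ shows that $[E_{1},e^{3}]_{q}$ is a nonzero multiple of $e^{2}$, and one further step yields $e^{1}$. Only the terminating relation $e^{1}E_{1}=q^{3}E_{1}e^{1}$ (forced precisely by $\dim V=4$), combined with the chain $e^{4}\to e^{3}\to e^{2}\to e^{1}$ of three $q$-commutator identities, gives the quartic-in-$E_{1}$ Serre relation. Note also that the relevant cross relations are $e^{i}(m^{+})^{1}_{2}$ for $i=4,3,2,1$, not $e^{4}(m^{+})^{i}_{i+1}$ as you wrote.

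For the quadratic-in-$E_{2}$ relation you need instead a relation between $e^{4}$ and $e^{3}$ inside $V(R',R)$, i.e.\ a single entry of $R'$. Here the paper avoids computing the minimal polynomial of $\hat R_{VV}$ or the full $R'$ altogether: since $V^{\vee}(R',R_{21}^{-1})$ is the degree-$3$ piece of the quantum plane with $f_{i}=x^{i-1}y^{4-i}$, the relation $f_{4}f_{3}=q^{-3}f_{3}f_{4}$ is immediate from $yx=qxy$, whence $e^{4}e^{3}=q^{3}e^{3}e^{4}$. Combined with $e^{4}E_{1}-q^{-3}E_{1}e^{4}=\textrm{const}\cdot e^{3}$ this yields $(E_{2})^{2}E_{1}-[2]_{q^{3}}E_{2}E_{1}E_{2}+E_{1}(E_{2})^{2}=0$ directly. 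Your proposed detour through the four eigenvalues of $\hat R_{VV}$ and the explicit $R'$ is therefore unnecessary, and your target relation $\mathrm{ad}_{q}^{4}(E_{2})(E_{1})=0$ is not a relation of $U_{q}(G_{2})$ in this labeling.
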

\begin{proof}
We just describe the cross relations and $q$-Serre relations in
positive part, the corresponding relations in negative part can be
obtained in a similar way. Under the identification of Theorem
\ref{g2},
$$
[E_{2},F_{2}]=\frac{K_{2}-K_{2}^{-1}}{q^{3}-q^{-3}},
\Delta(E_{2})=E_{2}\otimes K_{2}+1\otimes E_{2}, \mbox{and}~
\Delta(F_{2})=F_{2}\otimes 1+K_{2}^{-1}\otimes F_{2}
$$
can by obtained directly by Theorem \ref{cor1}. Moreover,
$$
\left.
\begin{array}{ll}
e^{4}(m^{+})^{4}_{4}c^{-1}&=\lambda
R^{4}_{a}{}^{4}_{b}(m^{+})^{a}_{4}e^{b}c^{-1}=
\lambda R^{4}_{4}{}^{4}_{4}(m^{+})^{4}_{4}e^{4}c^{-1}=\lambda \frac{1}{\lambda}R^{4}_{4}{}^{4}_{4}(m^{+})^{4}_{4}c^{-1}e^{4}\\
&=q^{\frac{3}{2}}R_{VV}{}^{4}_{4}{}^{4}_{4}(m^{+})^{4}_{4}c^{-1}e^{4}
=q^{\frac{3}{2}}q^{\frac{6}{2}}(m^{+})^{4}_{4}c^{-1}e^{4}=q^{6}(m^{+})^{4}_{4}c^{-1}e^{4}.
\end{array}
\right.
$$
Namely, we obtain $E_{2}K_{2}=q^{6}K_{2}E_{2}.$ In view of the
equality $(m^{+})^{2}_{2}K_{1}=(m^{+})^{1}_{1}$ obtained by Lemma
\ref{lemf}, and the cross relation $ e^{4}(m^{+})^{i}_{i}=\lambda
R^{i}_{i}{}^{4}_{4}(m^{+})^{i}_{i}e^{4} $, we obtain that
$e^{4}K_{1}=q^{-3}K_{1}e^{4},$ so $ E_{2}K_{1}=q^{-3}K_{1}E_{2}. $
On the other hand, by $K_{2}=K_{1}^{-\frac{3}{2}}c^{-1}$, we obtain
that
$$E_{1}K_{2}=E_{1}K_{1}^{-\frac{3}{2}}c^{-1}=q^{-3}K_{1}^{-\frac{3}{2}}c^{-1}E_{1}=q^{-3}K_{2}E_{1}.$$
Since $F_{1}$ is included in the entry $(m^{-})^{2}_{1}$ of $m^{-}$,
the relation between $E_{2}$ and $F_{1}$ can be obtained by the
following cross relation
$$(m^{-})^{2}_{1}e^{4}=\lambda R^{4}_{a}{}^{2}_{b}e^{a}(m^{-})^{b}_{1}=q^{-\frac{3}{2}}e^{4}(m^{-})^{2}_{1}.$$
Then by the expression of $(m^{-})^{2}_{1}$, we obtain
$$K_{1}^{-\frac{1}{2}}F_{1}e^{4}=q^{-\frac{3}{2}}e^{4}K_{1}^{-\frac{1}{2}}F_{1}=q^{-\frac{3}{2}}q^{\frac{3}{2}}K_{1}^{-\frac{1}{2}}e^{4}F_{1}=K_{1}^{-\frac{1}{2}}e^{4}F_{1},$$
so $F_{1}e^{4}=e^{4}F_{1}$, namely, $[E_{2},F_{1}]=0$.

Let us check the $q$-Serre relations of $E_{i},i=1,2$. Firstly, by
Theorem \ref{cor1}. we have $ e^{4}(m^{+})^{1}_{2}
=q^{-\frac{9}{2}}(m^{+})^{1}_{2}e^{4}+c_{1}[3]q^{-\frac{5}{2}}(m^{+})^{2}_{2}e^{3}$
and $ e^{4}K_{1}=q^{-3}K_{1}e^{4}$. So we obtain
\begin{equation}\label{g1}
e^{4}E_{1}-q^{-3}E_{1}e^{4}=[3]^{\frac{1}{2}}q^{-\frac{5}{2}}e^{3}.
\end{equation}
Next we need to know the relation of $e^{3}, E_{1}$ and $e^{4}$.
$f_{4}f_{3}=x^{3}x^{2}y=q^{-3}x^{2}yx^{3}=q^{-3}f_{3}f_{4}$ by
$yx=qxy$, so $R^{\prime}{}^{3}_{3}{}^{4}_{4}=q^{3}$, then
$e^{4}e^{3}=R^{\prime}{}^{3}_{3}{}^{4}_{4}e^{3}e^{4}=q^{3}e^{3}e^{4}$.
Combining with \eqref{g1}, we have
$(e^{4})^{2}E_{1}-(q^{3}+q^{-3})e^{4}E_{1}e^{4}+E_{1}(e^{4})^{2}=0$,
namely,
$$(E_{2})^{2}E_{1}-
\left[
\begin{array}{c}
2\\
1
\end{array}
\right] _{q^{3}} E_{2}E_{1}E_{2}+E_{1}(E_{2})^{2}=0.$$ On the other
hand, in order to obtain the relation of $e^{3}$ and $E_{1}$,
starting from the following cross relations in the new quantum group
\begin{gather*}
e^{3}(m^{+})^{1}_{2}=c_{1}[3]^{\frac{1}{2}}[2]q^{-\frac{1}{2}}m^{+2}_{2}e^{2}+q^{-\frac{3}{2}}(m^{+})^{1}_{2}e^{3},\\
e^{2}(m^{+})^{1}_{2}=c_{1}[3]q^{\frac{3}{2}}(m^{+})^{2}_{2}e^{1}+q^{\frac{3}{2}}(m^{+})^{1}_{2}e^{2},\quad
e^{1}(m^{+})^{1}_{2}=q^{\frac{9}{2}}(m^{+})^{1}_{2}e^{1},
\end{gather*}
we obtain $ E_{1}e^{3}-qe^{3}E_{1}=q^{-\frac{3}{2}}[2]e^{2},\,
E_{1}e^{2}-q^{-1}e^{2}E_{1}=[3]^{\frac{1}{2}}q^{-\frac{5}{2}}e^{1},$
and $ e^{1}E_{1}=q^{3}E_{1}e^{1}. $ Combining with \eqref{g1} again,
we obtain $
(E_{1})^{4}e^{4}-(q^{-3}+q^{-1}+q+q^{3})(E_{1})^{3}e^{4}E_{1}
+(q^{2}+1+q^{4}+1+q^{-4}+q^{-2})(E_{1})^{2}e^{4}(E_{1})^{2}-(q+q^{3}+q^{-1}+q^{-3})E_{1}e^{4}(E_{1})^{3}+e^{4}(E_{1})^{4}=0,
$ namely,
$$
(E_{1})^{4}E_{2}- \left[
\begin{array}{c}
4\\
1
\end{array}
\right] _{q}(E_{1})^{3}E_{2}E_{1}+ \left[
\begin{array}{c}
4\\
2
\end{array}
\right] _{q}(E_{1})^{2}E_{2}(E_{1})^{2}- \left[
\begin{array}{c}
4\\
3
\end{array}
\right] _{q}E_{1}E_{2}(E_{1})^{3}+E_{2}(E_{1})^{4}=0.
$$
So the Cartan matrix of the resulting quantum group is $ \left(
\begin{array}{cc}
2&-3\\
-1&2
\end{array}
\right), $ the proof is complete.
\end{proof}


\vskip1cm

\end{document}